\newcommand{\sech}{\operatorname{sech}}
\theoremstyle{mtheorem}
\newtheorem{mtheorem}{Theorem}
\newtheorem{theorem}{Theorem}[section]
\newtheorem{proposition}[theorem]{Proposition}
\newtheorem{lemma}[theorem]{Lemma}
\numberwithin{equation}{section}
\title[Solutions to gKdV 
with low power nonlinearity]{Well-posedness and dynamics of solutions to the  generalized KdV with low power nonlinearity} 
\author[I. Friedman]{Isaac Friedman}
\address{Department of Mathematics \\Otterbein University,  Westerville, OH, USA }
\curraddr{}
\email{friedman2@otterbein.edu}
\author[O. Ria\~no]{Oscar Ria\~no}
\address{Department of Mathematics \& Statistics\\Florida International University,  Miami, FL, USA}
\curraddr{}
\email{orianoca@fiu.edu}
\author[S. Roudenko]{Svetlana Roudenko}
\address{Department of Mathematics \& Statistics\\Florida International University,  Miami, FL, USA}
\curraddr{}
\email{sroudenko@fiu.edu}
\author[D. Son]{Diana Son}
\address{Department of Mathematics \\University of Tennessee Knoxville, Knoxville,  TN, USA}
\curraddr{}
\email{dson2@vols.utk.edu}
\author[K. Yang]{Kai Yang}
\address{Department of Mathematics \& Statistics\\Florida International University,  Miami, FL, USA}
\curraddr{}
\email{yangk@fiu.edu}
\subjclass[2010]{35Q53, 35Q35, 35B40, 35B44, 65M70, 65N35} 
\keywords{generalized KdV, well-posedness, weighted spaces, low power nonlinearity, soliton resolution, soliton interaction}
\begin{document}

\begin{abstract}
We consider two types of the generalized Korteweg\,-\,de\,Vries equation, where the nonlinearity is given with or without absolute values, and, in particular, including the low powers of nonlinearity, an example of which is the Schamel equation. We first prove the local well-posedness of both equations in a weighted subspace of $H^1$ that includes functions with polynomial decay, extending the result of \cite{LinaresMiyaGus} to fractional weights. We then investigate solutions numerically, confirming the well-posedness and  extending it to a wider class of functions that includes exponential decay. 
We include a comparison of solutions to both types of equations, in particular, we investigate soliton resolution for the positive and negative data with different decay rates. Finally, we study the interaction of various solitary waves in both models, showing the formation of solitons, dispersive radiation and even breathers, all of which are easier to track in nonlinearities with lower power.
\end{abstract}

\maketitle

\tableofcontents

\section{Introduction}

In this paper we study solutions to the Cauchy problem for the two types of the generalized Korteweg-de Vries equation: one, gKdV, with the nonlinearity powers $\alpha$ extended to include the fractional exponents 
\begin{equation}\label{gKdV} 
    \begin{aligned}
      \left\{\begin{aligned}
      &\partial_t u+\partial_x^3 u\pm u^{\alpha}\partial_x u=0, \qquad x, t \in \mathbb{R},  \\
      &u(x,0)=u_0,
      \end{aligned}\right.
    \end{aligned}
\end{equation}
where the power $\alpha=\frac{m}{k}$ with $m,k\geq 1$ odd integers; and the second one, GKdV,  with the absolute value incorporated into the nonlinearity
\begin{equation}\label{GK} 
    \begin{aligned}
      \left\{\begin{aligned}
      &\partial_t u+\partial_x^3 u\pm |u|^{\alpha}\partial_x u=0, \qquad x, t \in \mathbb{R}, \, \, \\
      &u(x,0)=u_0,
      \end{aligned}\right.
    \end{aligned}
\end{equation}
where $\alpha>0$. 

The gKdV and GKdV equations can be regarded as extensions of the $k$-generalized KdV equation 
\begin{equation}\label{kgKdV}
    \begin{aligned}
 \partial_t u+\partial_x^3 u+ u^{k}\partial_x u=0, \qquad x, t \in \mathbb{R}, \, \, k\in \mathbb{Z}^{+}.  
    \end{aligned}
\end{equation}
Note that \eqref{kgKdV} is obtained by setting $\alpha=k \in \mathbb{Z}^{+}$ in the gKdV equation \eqref{gKdV}. The well-known KdV equation with $k = 1$ was first proposed as a model for unidirectional propagation of nonlinear dispersive long waves, see \cite{KdV}. The integer cases $k\geq 2$ have been used in several physical contexts, such as shallow-water waves with weakly non-linear restoring forces, or long internal waves in
a density-stratified ocean, or ion-acoustic waves in plasma, among many others (e.g., see \cite{AbdelouhabBonaFellandSaut1989, BenjaminBonaMahony1972, BonaColinLannes2005, BonaPritchardScott1981, Craig1985, JeffreyKakutani1972, Bona1981, ScottChuMcLau1973}). 
The modular power nonlinearity as in \eqref{GK} has also been used in physics; for example, when $\alpha \in (0,1)$ it is studied in models of non-Maxwellian trapped electrons and description of their dynamics in ion-acoustic solitary waves, \cite{MS2006}. A special case of $\alpha = \frac12$ is called the Schamel equation,
\begin{equation}\label{E:Schamel}
\partial_t u+\partial_x^3 u +|u|^{\frac12}\partial_x u=0, \qquad x, t \in \mathbb{R}, 
\end{equation}
which was derived by Schamel in early 70's to incorporate the presence of trapped electrons with flat-topped electron distribution function in weakly nonlinear ion-acoustic waves \cite{Sch1973}, other models are described in \cite{PDT2018}, see also \cite{PSKP2021, CNP2018}.  

Regarding the well-posedness of the initial value problem associated to the $k$-generalized KdV equation \eqref{kgKdV} in $H^s$-spaces, we recall the following results:
\begin{itemize}
    \item $k=1$ with $s\geq -1$, see Killip and Vi{\c s}an \cite{KillipVisan2019} (see also \cite{BourgainI1993,Guo2009,KenigPonceVega1993,KenigPonceVega1996,
    ColliandeKeelStaffiTakaTao2003,ChirstColliandeTao2003} for results when $s\geq -\frac{3}{4}$).
    \item $k=2$, Kenig-Ponce-Vega \cite{KenigPonceVega1993} obtained the local well-posedness when 
$s\geq \frac{1}{4}$; Colliander, Keel, Staffilani, Takaoka, and Tao \cite{ColliandeKeelStaffiTakaTao2003} showed the global well-posedness if $s > \frac{1}{4}$; the global well-posedness for the case $s=\frac{1}{4}$ was established by Guo \cite{Guo2009} and Kishimoto \cite{Kishimoto2009}.
    \item $k=3$, see Gr\"unrock \cite{Grunrock2005} for the local well-posedness when $s>-\frac{1}{6}$, and Tao \cite{Tao2007} for $s=-\frac{1}{6}$; for the global well-posedness when $s>-\frac{1}{42}$, see 
Gr\"unrock, Panthee and Drumond Silva \cite{GrunPantheSilva2007}.
\item $k\geq 4$, for the local well-posedness when $s \geq \frac{k-4}{2k}$, see Kenig, Ponce and Vega \cite{KenigPonceVega1993}.
\end{itemize}

In \cite{Kato1983, FonsecaLinaresPonce2015, Nahas2012}, the well-posedness for the initial value problem \eqref{kgKdV} was established in weighted spaces $H^s(\mathbb{R})\cap L^2(|x|^r\, dx)$ with $s\geq r>0$, where the regularity assumption is $s\geq \max\{s_k,0\}$ with $s_1=-\frac{3}{4}$, $s_2=\frac{1}{4}$ and $s_k=\frac{k-4}{2k}$ for $k\geq 4$. In \cite{Kato1983} with $k=1$, the persistence of solutions was shown in weighted spaces $L^2(e^{\beta x}\, dx)$, $\beta>0$ (for uniqueness results in the above weighted spaces, refer to \cite{EscaKenigPonVega2007,IsazaLinaresPonce2013} and references therein). 

For the GKdV equation the existence of solutions for low powers of nonlinearity, $0<\alpha<1$ in \eqref{GK}, was addressed in Linares, Miyazaki and Ponce \cite{LinaresMiyaGus} in a subspace of $H^1$. The approach to obtain the existence was based on the work of Cazenave and Naumkin \cite{CazNaum2016}, where authors developed a method to obtain local and global well-posedness for the NLS equation with low powers of nonlinearity.
We emphasize the work \cite{LinaresMiyaGus} as this paper extends and improves the results of that work. The well-posedness in Fourier-Lebesgue type spaces $\hat{L}^r$ for some range of $\alpha$ has been studied in \cite{MS2016}, the existence of non-scattering solutions in some range of subcritical setting was obtained in \cite{MS2017}.

Formally, solutions of the gKdV and GKdV equations satisfy the mass and $L^1$-type conservation laws:
\begin{align}
& M[t]=\int \big(u(x,t)\big)^2\, dx=M[0], \\
& \int u(x,t)\, dx=\int u(x,0)\, dx.  
\end{align}
The energy is also conserved: in the gKdV case
\begin{equation}
    \begin{aligned}\label{E:gKdV}
    E_{gKdV}[t]=\frac{1}{2}\int |\partial_x u(x,t)|^2\, dx \mp \frac{1}{(\alpha+1)(\alpha+2)}\int \big( u(x,t) \big)^{\alpha+2}\, dx=E_{gKdV}[0],
    \end{aligned}
\end{equation}
and in the GKdV case
\begin{equation}\label{E:GKdV}
    \begin{aligned}
    E_{GKdV}[t]=\frac{1}{2}\int |\partial_x u(x,t)|^2\, dx \mp \frac{1}{(\alpha+1)(\alpha+2)}\int |u(x,t)|^{\alpha+2}\, dx=E_{GKdV}[0].
    \end{aligned}
\end{equation}
In a few very specific cases the gKdV equation has infinitely many conserved quantities: in the KdV ($\alpha=1$) and the modified KdV ($\alpha =2$); 
these models are referred to as completely integrable. (No other cases of gKdV or GKdV are known to be completely integrable.) One of the 
advantages of the complete integrability is a possibility of solving equations (with a sufficiently fast decay of initial data\footnote{For example, satisfying the Fadeev condition $\int_{\mathbb R} (1+|x|)|u_0(x)| \, dx < \infty$.}) as well as studying the interaction of solitary waves (called solitons) via the inverse scattering method, for which the literature is enourmous (see for instance, \cite{Miura1976}); however, if the decay is too slow, then the inverse scattering is not applicable.  

Both \eqref{gKdV} and \eqref{GK} equations are invariant under the scaling: if $u$ solves one of them, then so does
\begin{equation}\label{Scalein}
u_{\lambda}(x,t)=\lambda^{\frac{2}{\alpha}}u(\lambda x, \lambda^3 t), \quad \lambda >0.
\end{equation}
Consequently, the (homogeneous) Sobolev space $\dot{H}^{s_c}$ is invariant under the scaling \eqref{Scalein}  when $s_c=\frac{1}{2}-\frac{2}{\alpha}$. 

The traveling (solitary) wave solutions for both equations are of the form
\begin{equation}\label{E:solitons}
u(x,t)=Q_{c}(x-ct-c_0),
\end{equation}
where $c>0$ denotes the speed of propagation, $c_0$ is an initial shift, and $Q_c$ is the rescaled ground state solution $Q$, 
\begin{equation}\label{E:Q_c}
Q_c(x)=c^{\frac{1}{\alpha}}Q(c^{\frac{1}{2}} x ),
\end{equation} 
where $Q$ is taken to be a smooth, positive, vanishing at infinity solution in the GKdV case of the equation:
\begin{equation}\label{eq:ground}
-Q+Q''+ \frac{1}{(\alpha+1)}|Q|^{\alpha}Q=0, 
\end{equation}
and in the gKdV case
\begin{equation}\label{eq:ground2}
-Q+Q''\pm \frac{1}{(\alpha+1)}Q^{\alpha+1}=0.
\end{equation}
Though technically the equations \eqref{eq:ground} and \eqref{eq:ground2} are different, the positive (ground state) solutions are the same in both cases (in \eqref{eq:ground2} with the focusing positive sign in front of the nonlinearity). We remark that for the defocusing GKdV equation (i.e., with the negative nonlinearity $-|u|^{\alpha}\partial_x u$), the non-existence of ground state solitary waves can be established by standard Pohozaev identities. The equation \eqref{eq:ground} has an explicit solution 
\begin{equation}\label{explground}
    Q(x)=\Big(\tfrac{(\alpha+1)(\alpha+2)}{2} \Big)^{\frac{1}{\alpha}}\sech^{\frac2{\alpha}} \big(\tfrac{\alpha x}{2}\big),
\end{equation}
see Figure \ref{F:profile QallP} for a comparison for different $\alpha$'s. 
Note that in the focusing case of \eqref{eq:ground2} (with a plus sign) the ground state $Q$ in \eqref{explground} is also a solution of \eqref{eq:ground2}. 

\begin{figure}[ht]
\includegraphics[width=0.6\textwidth,height=0.25\textheight]{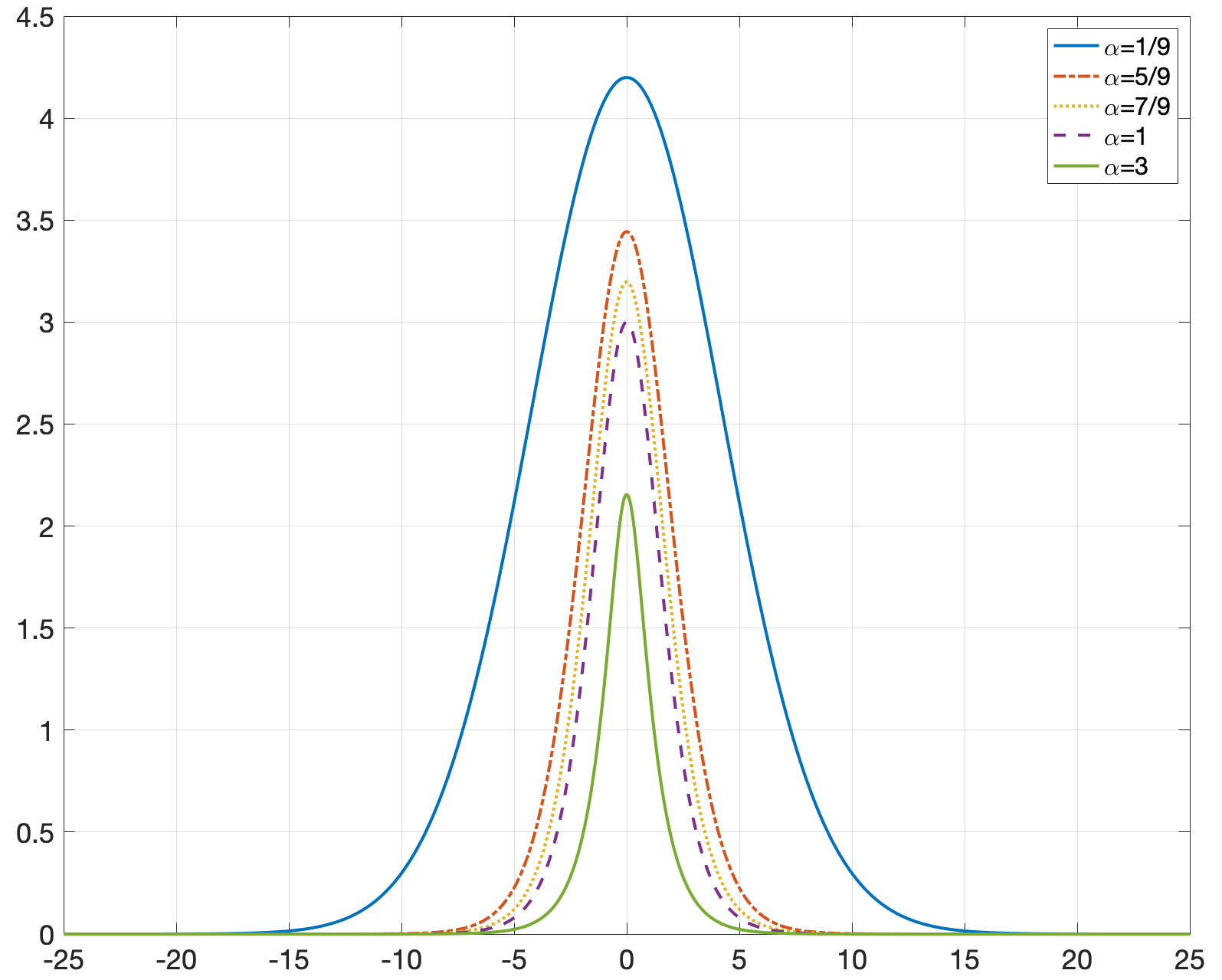}
\caption{\footnotesize The ground state profiles $Q$ from \eqref{explground} or \eqref{eq:ground}, \eqref{eq:ground2} for $\alpha=\frac19, \frac59, \frac79, 1,3$.}
\label{F:profile QallP}
\end{figure}

The traveling solitary waves \eqref{E:solitons} (often called solitons, though the terminology was defined for the integrable cases of KdV and mKdV) have been a subject of investigation for a long time as they play an important role not only as a fixed shape 
traveling for a sufficiently long time
but also as a threshold for global vs. finite time existence, stability vs. instability of solitary waves, and non-scattering solutions. Stability of solitary waves started in \cite{Benj1972}, with extensions and generalizations in \cite{Bona1975}, \cite{W-CPAM1986}, \cite{GSS1987}, \cite{BSS1987}, \cite{Albert1999}, \cite{PW1994}, \cite{MM2000}, \cite{MM2001} and many other results stemming from these. In most works on KdV-type equations, it is standard to consider positive {\it integer} power $k$ in the nonlinearity as defined in \eqref{kgKdV}. 

In this paper we study solutions to both the gKdV and GKdV equations with $\alpha>0$, specifically investigating the low power of nonlinearity $0<\alpha<1$, including the case of Schamel's equation \eqref{E:Schamel} ($\alpha=\frac12$). 
One of the main goals is to establish the local well-posedness
of solutions for the initial-value problems of the equations \eqref{gKdV} and \eqref{GK}. 
Since the nonlinearity is not necessarily smooth, inspired by the results in \cite{CazNaum2016} (see also \cite{AndyOscarSvet, CazHauNaum2020, CazNaum2018, LinaresMiyaGus, LinaresI, LinaresII, Miyazaki2020}), we introduce a class of initial data that guarantees the existence of local solutions for nonlinearities with such a low regularity. The idea is to consider sufficiently regular, with enough decay, initial conditions satisfying 
\begin{equation}\label{Incond}
    \inf_{x\in \mathbb{R}}\langle x \rangle^{m} |u_0(x)|>0,
\end{equation}
where $\langle x \rangle^m=(1+|x|^2)^{\frac{m}{2}}$, and construct local solutions of the gKdV and GKdV equations from such data, 
satisfying a similar property to \eqref{Incond} in both space and time variables. This in turn compensates the lack of regularity of the map $|u|^{\alpha}\partial_x u$. 

We remark that a similar approach was used before in \cite{LinaresMiyaGus} to obtain local solutions for the GKdV equation with $0<\alpha<1$, and $m$ being an integer in \eqref{Incond}, which was inspred by work of Cazenave-Naumkin for the NLS equation \cite{CazNaum2016}. In this work, we prove the well-posedness with fractional power weights and extend the range of admissible powers $m$.

\begin{mtheorem}\label{mainTHM}
Let $\alpha>0$, $m \in \mathbb{R}^{+}$, $m>\max\{\frac{1}{2\alpha},\frac{1}{2}\}$. Let $s\in \mathbb{Z}$ with $s \geq 2m+4$, and assume that $u_0$ is a complex-valued function such that 
\begin{equation}\label{eqmainTH1}
    u_0 \in H^s(\mathbb{R}), \, \, \langle x\rangle^mu_0 \in L^{\infty}(\mathbb{R}), \, \, \langle x \rangle^m\partial_x^{j}u_0 \in L^2(\mathbb{R}), \, \, j=1,2,3,4,
\end{equation}
\begin{equation}\label{eqmainTH2}
    \|u_0\|_{H^s}+\|\langle x \rangle^m u_0\|_{L^{\infty}}+\sum_{j=1}^4\|\langle x \rangle^m\partial_x^{j}u_0 \|_{L^2}<\delta
\end{equation}
for some $\delta>0$ and
\begin{equation}\label{condi1}
\inf_{x\in \mathbb{R}}\langle x \rangle^m|u_0(x)|=:\lambda>0.
\end{equation}
Then there exist $T=T(\alpha,\delta,s,\lambda)>0$ and a unique solution $u$ of the gKdV equation (or a unique solution $u$ of the GKdV equation with $\alpha=\frac{m}{k}>0$, where $m$, $k$  are odd integers)  in the class
\begin{equation}\label{eqmainTH3}
    u \in C([0,T];H^s(\mathbb{R})), \qquad \langle x \rangle^m \partial_x^{j}u \in C([0,T];L^2(\mathbb{R})), \, \, j=1,2,3,4
\end{equation}
with 
\begin{equation}\label{eqmainTH4}
    \langle x \rangle^m u \in C([0,T];L^{\infty}(\mathbb{R})), \qquad \partial_x^{s+1}u\in L^{\infty}(\mathbb{R}; L^2([0,T])),
\end{equation}
and
\begin{equation}\label{eqmainTH5}
    \sup_{0\leq t \leq T}\|\langle x\rangle^m(u(t)-u_0)\|_{L^{\infty}}\leq \frac{\lambda}{2}.
\end{equation}
Moreover, the map $u_0 \mapsto u(\cdot,t)$ is continuous in the following sense: for any compact $I\subset [0,T]$, there exists a neighborhood $V$ of $u_0$ satisfying \eqref{eqmainTH1} and \eqref{condi1} such that the map is Lipschitz continuous from $V$ into the class defined
by \eqref{eqmainTH3} and \eqref{eqmainTH4}.
\end{mtheorem}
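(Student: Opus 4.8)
The plan is to recast both equations as a single fixed-point problem and solve it by contraction in a complete metric space built around the lower bound \eqref{condi1}. Writing $U(t)=e^{-t\partial_x^3}$ for the Airy propagator and exploiting the divergence form of the nonlinearity, $u^{\alpha}\partial_x u=\frac{1}{\alpha+1}\partial_x(u^{\alpha+1})$ and $|u|^{\alpha}\partial_x u=\frac{1}{\alpha+1}\partial_x(|u|^{\alpha}u)$, the Cauchy problem is equivalent to the integral equation
\begin{equation*}
u(t)=\Phi(u)(t):=U(t)u_0\mp\frac{1}{\alpha+1}\int_0^t\partial_x U(t-\tau)\,G(u(\tau))\,d\tau,
\end{equation*}
where $G(u)=u^{\alpha+1}$ in the gKdV case and $G(u)=|u|^{\alpha}u$ in the GKdV case. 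I would define the solution space $X_T^{\lambda}$ as the set of $u$ in the class \eqref{eqmainTH3}--\eqref{eqmainTH4} whose norms are bounded by a fixed multiple of $\delta$ and which satisfy the closeness condition \eqref{eqmainTH5}, metrized by the weighted $L^{\infty}$ distance together with the $H^s$, Kato, and weighted $L^2$ distances of the derivatives. The decisive structural point is that \eqref{eqmainTH5} forces
\begin{equation*}
\langle x\rangle^m|u(x,t)|\ge \langle x\rangle^m|u_0(x)|-\tfrac{\lambda}{2}\ge\tfrac{\lambda}{2},
\qquad\text{so}\qquad |u(x,t)|\ge\tfrac{\lambda}{2}\langle x\rangle^{-m}>0,
\end{equation*}
that is, on $X_T^{\lambda}$ the solution never vanishes and the otherwise non-Lipschitz map $u\mapsto|u|^{\alpha}u$ behaves like a smooth function of $u$.

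Next I would assemble the linear estimates for $U(t)$ on which the contraction rests: unitarity on $H^s$; the sharp Kato local smoothing estimate controlling $\partial_x^{s+1}U(t)u_0$ together with its inhomogeneous (dual) counterpart for the Duhamel term in $L^\infty_x L^2_T$, where the divergence form of the nonlinearity supplies the crucial gain of one derivative; and the weighted bounds that propagate $\langle x\rangle^m\partial_x^j(\cdot)$ for $j\le 4$. The latter are the genuinely new ingredient here: for fractional $m$ the weight does not commute with $\partial_x^3$ through a finite binomial expansion as in the integer case of \cite{LinaresMiyaGus}, so I would control the commutator $[\langle x\rangle^m,\partial_x^3]$ by fractional Leibniz (Kato--Ponce) estimates and complex interpolation between integer weights, together with a weighted smoothing estimate giving
\begin{equation*}
\big\|\langle x\rangle^m\big(U(t)-I\big)u_0\big\|_{L^\infty}\longrightarrow 0\quad\text{as }t\to 0^{+}.
\end{equation*}
This last estimate is exactly what will let me make the difference $\Phi(u)-u_0$ small in weighted $L^\infty$.

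The heart of the argument is to estimate $G(u)$, and the differences $G(u)-G(v)$, in these norms using only that $u,v\in X_T^{\lambda}$. The upper bound $|u|\le\|\langle x\rangle^m u\|_{L^\infty}\langle x\rangle^{-m}$ gives $|u|^{\alpha}\le C\langle x\rangle^{-m\alpha}$, which already handles the lowest-order contributions. The difficulty lies in the high-regularity terms: differentiating $|u|^{\alpha}u$ repeatedly produces, via the chain rule, negative powers $|u|^{\alpha-j}$ with $j$ up to $s$, which are singular where $u$ is small. Here the lower bound is indispensable, converting each such factor into controlled polynomial growth,
\begin{equation*}
|u(x,t)|^{\alpha-j}\le\Big(\tfrac{\lambda}{2}\Big)^{\alpha-j}\langle x\rangle^{m(j-\alpha)},
\end{equation*}
which I then absorb against the weighted-derivative budget of $X_T^{\lambda}$; the hypothesis $s\ge 2m+4$ is precisely what guarantees enough derivatives and decay to close these estimates. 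The two thresholds on $m$ enter at the integrability level: $m>\tfrac{1}{2}$ and $m>\tfrac{1}{2\alpha}$ are equivalent to $\langle x\rangle^{-m}\in L^2(\mathbb{R})$ and $\langle x\rangle^{-m\alpha}\in L^2(\mathbb{R})$, respectively, which is what makes the residual weights square-integrable after the reductions above. Combining the nonlinear bounds with the Kato and weighted linear estimates then shows that $\Phi$ maps $X_T^{\lambda}$ into itself and is a contraction once $T=T(\alpha,\delta,s,\lambda)$ is taken small enough.

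Finally I would verify the closeness condition \eqref{eqmainTH5} directly from the decomposition $\Phi(u)-u_0=(U(t)-I)u_0\mp\frac{1}{\alpha+1}\int_0^t\partial_x U(t-\tau)G(u(\tau))\,d\tau$: the first term tends to $0$ in weighted $L^\infty$ by the linear smoothing estimate above, and the Duhamel term is $O(T^{\theta})$ for some $\theta>0$, so for $T$ small both are $\le\frac{\lambda}{4}$ and $\Phi(X_T^{\lambda})\subset X_T^{\lambda}$, keeping the iteration inside the region where $|u|^{\alpha}$ is regular. The Banach fixed-point theorem then yields a unique $u\in X_T^{\lambda}$, and applying the same contraction estimates to two initial data on a compact time subinterval produces the Lipschitz continuous dependence asserted in the statement. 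I expect the main obstacle to be reconciling the fractional weight with the low-power nonlinearity simultaneously: one must propagate the non-integer weighted norms through the non-commuting Airy flow \emph{and} keep $\inf_x\langle x\rangle^m|u|$ bounded below so that $|u|^{\alpha}$ stays differentiable, with the single hypothesis $m>\max\{\tfrac{1}{2\alpha},\tfrac{1}{2}\}$ forced to serve both roles. The most delicate pieces are the weighted $L^\infty$ control of $U(t)-I$ and the fractional-weight commutator estimates, which, unlike the integer case, admit no finite expansion and must be obtained by interpolation and Kato--Ponce inequalities.
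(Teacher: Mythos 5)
Your proposal is correct in outline, and its skeleton coincides with the paper's: the same resolution space (the norms in \eqref{eqmainTH3}--\eqref{eqmainTH4} capped by a multiple of $\delta$, together with the closeness condition \eqref{eqmainTH5}), the same exploitation of \eqref{condi1} through the pointwise lower bound $\langle x \rangle^m |u(x,t)|\geq \lambda/2$ to tame every factor $|u|^{\alpha-j}$, the same role of $m>\max\{\tfrac{1}{2\alpha},\tfrac12\}$ as square-integrability of $\langle x \rangle^{-m\alpha}$ and $\langle x \rangle^{-m}$, and the Banach fixed point plus re-run estimates for the Lipschitz dependence. Where you genuinely deviate is in the two technical engines. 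First, you write the nonlinearity in divergence form and load the derivative onto the propagator, so your Duhamel term is $\int_0^t \partial_x U(t-\tau) G(u)\,d\tau$; this obliges you to invoke the \emph{inhomogeneous} (dual) Kenig--Ponce--Vega smoothing estimates (from $L^1_xL^2_T$ into $L^\infty_T L^2_x$, and with a two-derivative gain into $L^\infty_x L^2_T$) and to bound $\partial_x^s G(u)$ in $L^1_xL^2_T$, which closes because $m(\alpha+1)>1$. The paper instead keeps $|u|^\alpha \partial_x u$ as is, uses only the homogeneous smoothing of Lemma \ref{KatoS} with Minkowski's inequality, and pays for the top-order term $|u|^{\alpha}\partial_x^{s+1}u$ with the Kato norm $\|\partial_x^{s+1}u\|_{L^\infty_xL^2_T}$ of the iterate itself times $\|\langle x \rangle^{-m\alpha}\|_{L^2_x}$, as in \eqref{eqWLEQ4.1}. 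Both bookkeepings close under the stated hypotheses; yours needs extra linear estimates, the paper's needs derivatives of order $s+1$ inside the nonlinearity.

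The fractional weight is where your plan is under-specified, and it sits exactly at the paper's main new ingredient, Lemma \ref{derivexp2}, whose cost is sharp: $2m$ derivatives. Two cautions. (i) Your diagnosis is slightly off: the commutator $[\langle x \rangle^m,\partial_x^3]=3(\langle x \rangle^m)'\partial_x^2+3(\langle x \rangle^m)''\partial_x+(\langle x \rangle^m)'''$ \emph{is} a finite expansion for every real $m$; what breaks for fractional $m$ is the Fourier-side argument of \cite{LinaresMiyaGus}, where $x^{m}U(t)f$ is computed via $\partial_\xi^{m}\big(e^{it\xi^3}\widehat{f}\,\big)$ --- for $m\notin\mathbb{Z}$ this becomes a nonlocal operator, which is precisely what Stein derivatives and Lemma \ref{fractderLiner} handle. (ii) If you instead run the commutator through Duhamel, $\langle x \rangle^m U(t)f=U(t)(\langle x \rangle^m f)+\int_0^t U(t-s)[\partial_x^3,\langle x \rangle^m]U(s)f\,ds$, each unit of weight costs two derivatives, and the iteration terminates at the pure-derivative term $\|\partial_x^{2\lceil m \rceil}f\|_{L^2}$, i.e.\ a cost of $2\lceil m \rceil$ rather than $2m$: for $m=\tfrac32$ this would force $s\geq 2\lceil m\rceil+4=8$, whereas the theorem asserts $s\geq 7$ suffices. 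So commutators plus Kato--Ponce alone do not close; the irreducible base case --- a weight of fractional size $\mu\in(0,1)$ on $U(t)g$ at cost $2\mu$ derivatives --- is exactly the content of Lemma \ref{fractderLiner}. Your fallback, complex interpolation between the integer-weight estimates, can restore the sharp count, but the required interpolation of the intersections $H^{2\mu}\cap L^2(\langle x \rangle^{2\mu}dx)$ is itself the nontrivial machinery (it is what Propositions \ref{propweight2}, \ref{PropDbyJ} and Lemma \ref{Interplemma} package, following Nahas--Ponce). In short: the one step you left schematic is the theorem's actual novelty; once either Lemma \ref{fractderLiner} or that weighted interpolation theory is imported, your argument --- including the weighted $L^\infty$ smallness of $(U(t)-I)u_0$, which the paper obtains in \eqref{eqWLEQ8} from $U(t)u_0-u_0=-\int_0^t U(s)\partial_x^3 u_0\,ds$ and Lemma \ref{derivexp2}, consuming the hypotheses $\langle x \rangle^m \partial_x^j u_0\in L^2$, $j=3,4$, and $s\geq 2m+4$ --- runs parallel to the paper's and yields the same conclusion.
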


Our proof of Theorem \ref{mainTHM} adapts the arguments in \cite{LinaresMiyaGus} developed for the gKdV initial value problem \eqref{GK} with $0<\alpha<1$. We are able to extend  
the results in \cite{LinaresMiyaGus} to {\it arbitrary} powers $\alpha>0$ and {\it fractional} weights $m$ (not only integers), thus, enlarging the range of possible decay on the initial data (allowing, for example, very slow decay, such as $1/|x|^{1/2+}$ when $\alpha \geq 1$). A key argument is the deduction of Lemma \ref{derivexp2}, which relates the action of fractional weights on solutions of the linear KdV equation.

After we obtain the local well-posedness as described in Theorem \ref{mainTHM}, we study the solutions numerically, in particular, we investigate solutions with initial data decay not given by the theorem. Furthermore, we study the difference between solutions of the gKdV and GKdV equations, comparing them in various settings, including different powers $\alpha$ and initial data.

{\bf Remarks.}
(i) An example of initial data that satisfies the conditions of Theorem \ref{mainTHM} is the following functions (here, $m$ is as in the statement of the theorem)
\begin{equation}\label{E:example}
u_0(x)=\frac{2\lambda e^{i\theta}}{\langle x \rangle^m}+\varphi(x), \quad  \lambda \in \mathbb R, \quad \theta \in \mathbb{R},
\end{equation}
with $\varphi \in \mathcal S(\mathbb{R})$ (the Schwartz class of functions), i.e., the decay at infinity is $1/|x|^m$. 

(ii) Numerically we study solutions to the Cauchy problems \eqref{gKdV} and \eqref{GK} with initial data decaying at infinity as slow as $1/|x|$ (i.e., $|x|^{-\beta}$
with $\beta \geq 1$), see Section \ref{S:Numerical results}.
Note that our local theory in Theorem \ref{mainTHM} assures the existence of solutions, in fact, for a wider class of conditions with $\beta>\max\{\frac{1}{2\alpha},\frac{1}{2}\}$ (for $\alpha > \frac12$). (This is also an improvement of the results in \cite{LinaresMiyaGus}.)

(iii) On the other hand, observe that the class of initial data in Theorem \ref{mainTHM} does not include any exponentially decaying data, for example, the ground state \eqref{explground}. This is due to the fact that the analytical approach in the proof of the theorem can only deal with the polynomial decay (polynomial weights in \eqref{condi1}). Nevertheless, using the numerical approach, we are able to investigate the behavior of solutions which decay exponentially, in particular, we study the ground state initial condition and its various perturbations. It would be interesting to obtain local well-posedness for the exponentially decaying data, which would include 
the ground state, for the above equations with low power nonlinearity.

The remark above shows that our analytical and numerical results complement each other in terms of studying the local behavior of solutions. Beyond the local well-posedness, we study numerically the dynamics of solutions (the formation of solitons and dispersive radiation) as well as the interaction of solitary waves in both types of equations: we start with the ground state initial data and investigate the stability in both settings,  
then we exhibit the difference of solutions between the gKdV and GKdV equations for various types of initial data. We find that starting with positive initial data, solutions converge to a rescaled soliton, or a combination of several solitons traveling to the right, plus the radiation (fast oscillatory dispersing part) traveling to the left, which justifies the soliton resolution conjecture. 
Furthermore, the solitons which are formed in the gKdV model are slightly higher (and thus, faster) than the ones generated by the same data in the GKdV model. The negative initial data generates similar behavior (up to the sign) in the GKdV model, and a very different behavior in the gKdV model, where the simulations on a relatively short time interval show that   
the gKdV solutions are more likely to disperse into the radiation. However, we observe a new phenomenon in gKdV solutions evolving from the negative data (which is easier to observe for small powers $\alpha \ll 1$): after the initial bump disperses left into the radiation, the first negative peak decreases in its height and eventually disappears, which allows the second positive peak to start forming a soliton, followed later by emergence of more and more smaller solitons (see Figures \ref{F:profile Ngau 19}, \ref{F:profile Ngau 59}).
Lastly, we study the interaction of two bump profiles. 
For a more complete picture, we study interactions of different types of multi-bumps, and for example, in the interaction of two Gaussian bumps of opposite sign we also observe breathers in the GKdV equation.

This paper is organized as follows: in Section \ref{S:lwp}, we prove the local well-posedness establishing Lemma \ref{derivexp2} and using the key ingredients from \cite{LinaresMiyaGus}. In Section \ref{S:Numerical results}, we start our numerical investigations of soliton dynamics and stability, and give positive confirmation to soliton resolution conjecture. In Section \ref{S:4} we consider interaction of two solitary waves, first between two soliton profiles (possibly with different signs) and then a combination of solitons and Gaussian profiles. We briefly describe our numerical method in Appendix.

{\bf Acknowledgments.} 
The research of this project started 
during the Summer 2021 REU program ``AMRPU @ FIU" that took place at the Department of Mathematics and Statistics, Florida International University, and was organized under the NSF (REU Site) grant DMS-2050971. In particular,  support of I.F. and D.S. came from that grant. O.R., S.R. and K.Y. were partially supported by DMS-1927258 (PI: S. Roudenko). 
\smallskip

{\bf Notation.}
Given $x\in \mathbb{R}$, $\lfloor x \rfloor$ denotes the greatest integer less than or equal to $x$. The Fourier transform and the inverse Fourier transform of a function $f$ are denoted by $\widehat{f}$ and $f^{\vee}$, respectively.  For $s\in \mathbb{R}$, the Bessel potential of order $-s$ is denoted by $J^s=(1-\Delta)^{s/2}$, equivalently, $J^s$ is defined by the Fourier multiplier with symbol $\langle \xi \rangle^{s}=(1+|\xi|^2)^{s/2}$.  The Riesz potential of order $-s$ is denoted $D^s=(-\Delta)^{s/2}$, that is, $D^s$ is the Fourier multiplier operator determined by the function $|\xi|^s$. Given $\beta\in (0,1)$, we use one of the Stein's derivatives $\mathcal{D}^{\beta}$, see \eqref{Steinsquare} below.

If $A$ denotes a functional space, we define the spaces $L^p_TA$ and $L^{p}_t A$ by the norms
\begin{equation*}
\|f\|_{L^{p}_T A}=\big\| \|f(\cdot,t)\|_{A} \big\|_{L^{p}([0,T])}\,  \text{ and } \, \|f\|_{L^{p}_t A}=\big\| \|f(\cdot,t)\|_{A} \big\|_{L^{p}(\mathbb{R})},
\end{equation*}
respectively, for all $1\leq p\leq \infty$. Similarly, for all $1\leq p,q\leq \infty$ we define
$$
\|f\|_{L^{p}_x L^q_t}=\big\| \|f(x,\cdot)\|_{L^q(\mathbb{R})} \big\|_{L^{p}_x(\mathbb{R})}  \quad \mbox{and} \quad \|f\|_{L^{p}_x L^{q}_T}=\big\| \|f(x,\cdot)\|_{L^q([0,T])} 
\big\|_{L^{p}_x(\mathbb{R})}.
$$ 

We denote by $U(t)=e^{-t \partial_x^3}$, $t\in \mathbb{R}$, the unitary group describing the solution of the linear equation associated to the gKdV equation, in other words,
\begin{equation}\label{Uoperdef}
    \begin{aligned}
    U(t)f(x)=\int e^{it \xi^3+ix\xi} \, \widehat{f}(\xi)\, d\xi,
    \end{aligned}
\end{equation}
provided that $f$ is sufficiently regular with enough decay.


\section{Well-posedness}
\label{S:lwp}

In this part we prove the local well-posedness result stated in Theorem \ref{mainTHM}. We first deduce some key linear estimates between weights and solutions of the linear KdV equation. Then, we prove existence of solutions for the gKdV and GKdV equations.


\subsection{Linear estimates}

Let $\beta\in (0,1)$. We define one of Stein's fractional derivatives (see \cite{Stein1961})
\begin{equation}\label{Steinsquare}
\mathcal{D}^{\beta} f(x)=\left(\int_{\mathbb{R}^N}\frac{|f(x)-f(y)|^2}{|x-y|^{N+2\beta}}\, dy\right)^{1/2}, \, \,  x\in \mathbb{R}^N.
\end{equation}
When $p=2$ and $\beta\in (0,1)$, it follows
\begin{equation} \label{prelimneq}  
\left\|\mathcal{D}^{\beta}(fg)\right\|_{L^2(\mathbb{R}^N)} \leq C( \left\|f\mathcal{D}^{\beta} g\right\|_{L^2(\mathbb{R}^N)}+\left\|g\mathcal{D}^{\beta}f \right\|_{L^2(\mathbb{R}^N)}),
\end{equation}
and also
\begin{equation} \label{prelimneq1}
\left\|\mathcal{D}^{\beta} f\right\|_{L^{\infty}(\mathbb{R}^N)} \leq C( \left\|f\right\|_{L^{\infty}(\mathbb{R}^N)}+\left\|\nabla f\right\|_{L^{\infty}(\mathbb{R}^N)}).
\end{equation}

We recall the following characterization of the spaces $L^p_s(\mathbb{R}^N)=J^{-\beta}L^p(\mathbb{R}^N)$.
\begin{theorem}(Stein \cite{Stein1961})\label{TheoSteDer} 
Let $\beta \in (0,1)$ and $\frac{2N}{N+2\beta}<p<\infty$. Then $f\in L_{\beta}^p(\mathbb{R}^N)$ if and only if
\begin{itemize}
\item[(i)]  $f\in L^p(\mathbb{R}^N)$ and
\item[(ii)]$\mathcal{D}^{\beta}f \in L^{p}(\mathbb{R}^N)$
\end{itemize}
with 
\begin{equation*}
\|J^b f\|_{L^p(\mathbb{R}^N)}=\|(1-\Delta)^{\frac{\beta}{2}} f \|_{L^p(\mathbb{R}^N)} \simeq \|f\|_{L^p(\mathbb{R}^N)}+\|\mathcal{D}^{\beta} f\|_{L^p(\mathbb{R}^N)} \simeq \|f\|_{L^p(\mathbb{R}^N)}+\|D^{\beta} f\|_{L^p(\mathbb{R}^N)}.
\end{equation*} 
\end{theorem}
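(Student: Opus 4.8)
The plan is to prove the two norm equivalences separately, isolating the genuinely harmonic-analytic content. The first equivalence, $\|J^\beta f\|_{L^p}\simeq \|f\|_{L^p}+\|D^\beta f\|_{L^p}$, is a routine multiplier statement: the symbol $(1+|\xi|^2)^{\beta/2}(1+|\xi|^\beta)^{-1}$ and its reciprocal both satisfy the Mihlin--H\"ormander (or Marcinkiewicz) derivative bounds uniformly, so $J^\beta$ and $I+D^\beta$ are comparable operators on $L^p$ for $1<p<\infty$. Hence everything reduces to the core claim
\begin{equation*}
\|\mathcal D^\beta f\|_{L^p(\mathbb R^N)}\simeq \|D^\beta f\|_{L^p(\mathbb R^N)}.
\end{equation*}
First I would record the pointwise reformulation of the Stein derivative as an $L^2$-valued difference operator: substituting $y=x-h$ in \eqref{Steinsquare} gives
\begin{equation*}
\mathcal D^\beta f(x)^2=\int_{\mathbb R^N}\frac{|f(x)-f(x-h)|^2}{|h|^{N+2\beta}}\,dh,
\end{equation*}
so, setting $\mathcal H=L^2(\mathbb R^N, dh)$ and defining the $\mathcal H$-valued operator $Tf(x)(h)=|h|^{-N/2-\beta}\big(f(x)-f(x-h)\big)$, we have $\mathcal D^\beta f(x)=\|Tf(x)\|_{\mathcal H}$.

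Next comes the $L^2$ identity, which pins down the constant and is the foundation for everything else. By Plancherel in $x$, Tonelli, and the scaling/rotation identity
\begin{equation*}
\int_{\mathbb R^N}\frac{|1-e^{-ih\cdot\xi}|^2}{|h|^{N+2\beta}}\,dh=c_{N,\beta}\,|\xi|^{2\beta},
\end{equation*}
one obtains $\|\mathcal D^\beta f\|_{L^2}^2=c_{N,\beta}\|D^\beta f\|_{L^2}^2$. The constant $c_{N,\beta}$ is finite \emph{precisely} because $0<\beta<1$: the singularity of the integrand near $h=0$ is integrable iff $\beta<1$, and its decay at $h=\infty$ is integrable iff $\beta>0$. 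Equivalently, the operator $S:=T\circ D^{-\beta}$ is bounded and bounded below on $L^2$ with values in $\mathcal H$.

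The passage to general $p$ is the main obstacle, and I would carry it out by Hilbert-space-valued Calder\'on--Zygmund theory. The operator $S$ is a convolution operator with $\mathcal H$-valued kernel $K$, built from $D^{-\beta}$ composed with the difference family. I would verify the H\"ormander regularity condition $\int_{|x|>2|z|}\|K(x-z)-K(x)\|_{\mathcal H}\,dx\le C$, balancing the order-$\beta$ smoothing of $D^{-\beta}$ against the order-$\beta$ singularity of the difference weight $|h|^{-N/2-\beta}$. Combined with the $L^2$ bound of the previous step, the vector-valued Benedek--Calder\'on--Panzone theorem upgrades $S$ to a bounded map $L^p\to L^p(\mathcal H)$, yielding the upper bound $\|\mathcal D^\beta f\|_{L^p}\lesssim\|D^\beta f\|_{L^p}$. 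The reverse inequality, which reconstructs $D^\beta f$ from the square function and certifies membership in the potential space, is where the restriction $\tfrac{2N}{N+2\beta}<p<\infty$ becomes essential.

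I expect two delicate points. First, the sharp kernel estimates needed for the H\"ormander condition, together with the careful treatment of the non-absolutely-convergent part of the singular integral. Second, and more structurally, tracking exactly why the threshold $p=\tfrac{2N}{N+2\beta}$ is forced: note that this exponent is the dual of the Sobolev exponent for $\dot H^{\beta}$, since $\tfrac1p=\tfrac12+\tfrac{\beta}{N}$ is conjugate to $\tfrac1q=\tfrac12-\tfrac{\beta}{N}$. This is not a coincidence—because $\mathcal D^\beta$ is an $L^2$-based square function, the inversion of $S$ proceeds through a duality/reproducing argument that invokes the $\dot H^\beta$ Sobolev embedding, and it is exactly there that $p$ must exceed $\tfrac{2N}{N+2\beta}$; for smaller $p$ the square function no longer controls the potential and the characterization genuinely fails. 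For $p\ge 2$ the constraint is vacuous, so the subtlety is entirely confined to the range $p<2$.
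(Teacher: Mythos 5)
Before comparing, note that the paper itself contains no proof of this statement: Theorem \ref{TheoSteDer} is quoted as a known result and used as a black box via the citation \cite{Stein1961}, so your attempt can only be judged against the classical argument. Judged that way, it has a genuine error at its central step. You propose to prove the forward bound $\|\mathcal D^\beta f\|_{L^p}\lesssim\|D^\beta f\|_{L^p}$ by Hilbert-space-valued Calder\'on--Zygmund theory applied to $S=T\circ D^{-\beta}$. If that scheme worked, it would deliver the bound for \emph{every} $1<p<\infty$, since the vector-valued (Benedek--Calder\'on--Panzone) theorem gives the whole range $1<p<\infty$ once one has the $L^2$ bound and the H\"ormander kernel condition. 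But that bound is genuinely false for $1<p\le\frac{2N}{N+2\beta}$, a nonempty range whenever $2\beta<N$ (so always when $N\ge 2$, and when $N=1$, $\beta<\tfrac12$). Counterexample: take any nonzero Schwartz $f$. Then $D^\beta f$ is bounded with tail $O(|x|^{-N-\beta})$, hence lies in $L^p$ for every $p\ge1$ (and $f\in L^p_\beta$ for every $p$), yet for large $|x|$ the contribution of a fixed ball $B$ with $\int_B|f|^2>0$ to \eqref{Steinsquare} already gives $\mathcal D^\beta f(x)\gtrsim |x|^{-(N+2\beta)/2}$, so $\mathcal D^\beta f\in L^p$ forces $p>\frac{2N}{N+2\beta}$. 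The concrete point where your plan collapses is the kernel: $K(x)(h)=|h|^{-N/2-\beta}\bigl(I_\beta(x)-I_\beta(x-h)\bigr)$, where $I_\beta(x)=c_{N,\beta}|x|^{\beta-N}$ is the Riesz kernel. Since $I_\beta$ is locally square integrable only when $\beta>N/2$, one has $\|K(x)\|_{\mathcal H}=\infty$ for every $x$ whenever $\beta<N/2$, so the H\"ormander condition you intend to verify is not even finite. This regime cannot be discarded here: the paper invokes the theorem with $N=1$ and $\beta=m_2$ equal to an arbitrary fractional part in $(0,1)$, including $\beta<\tfrac12$.

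The same counterexample shows your structural diagnosis is inverted: the restriction $p>\frac{2N}{N+2\beta}$ is forced on the \emph{forward} (square-function) inequality, not on the reconstruction of the potential from $\mathcal D^\beta f$ --- the Schwartz function above belongs to $L^p_\beta$ for every $p$ and still violates $\mathcal D^\beta f\in L^p$ below the threshold. In Stein's proof the forward direction is obtained not from Calder\'on--Zygmund theory but from a pointwise domination of $\mathcal D^\beta(J^\beta g)$ by (essentially) the Littlewood--Paley function $g_\lambda^*(g)$ with $\lambda=1+2\beta/N$, and the threshold enters precisely as the sharp range $p>2/\lambda=\frac{2N}{N+2\beta}$ for $L^p$ boundedness of $g_\lambda^*$; the reconstruction direction is then handled through the polarized $L^2$ identity and duality, together with the multiplier reduction. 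Your Steps 1 and 2 --- the Mihlin--H\"ormander reduction $\|J^\beta f\|_{L^p}\simeq\|f\|_{L^p}+\|D^\beta f\|_{L^p}$ for $1<p<\infty$, and the Plancherel identity with the observation that $0<\beta<1$ makes the constant finite --- are correct and do appear in the standard argument. But since your forward estimate is argued by a method that would prove a false statement, the proposal has an unfixable gap unless that step is replaced by a genuinely $p$-sensitive tool such as the $g_\lambda^*$ function.
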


\begin{proposition}\label{PropDbyJ}
Let $\beta \in [0,1)$, $m \in \mathbb{R}^{+}\cup \{0\}$ and $k \in \mathbb{Z}^{+}\cup \{0\}$, then
\begin{equation}\label{Deriineq}
\begin{aligned}
\|D^{\beta} \big(\langle x \rangle^{m}\partial_{x}^{k}f \, \big)\|_{L^2(\mathbb{R})} \leq C\big( \sum_{\substack{0\leq l\leq k \\ l<m}} \|J^{\beta+k-l}\big(\langle x \rangle^{m-l}f \, \big)\|_{L^2(\mathbb{R})}+\|J^{\beta+k-m} f \,  \|_{L^2(\mathbb{R})}\big),
\end{aligned}
\end{equation} 
where we define by zero the sum over the empty summation.
\end{proposition}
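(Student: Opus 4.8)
The plan is to reduce the $D^{\beta}$ bound to an $L^2$ estimate together with a Stein-derivative estimate, and then to establish both by induction on the number of derivatives $k$, distributing the derivatives across the weight $\langle x\rangle^m$ via the Leibniz rule.

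First I would invoke Theorem~\ref{TheoSteDer}. Since $|\xi|^{\beta}\le\langle\xi\rangle^{\beta}$ pointwise, one has $\|D^{\beta}g\|_{L^2}\le\|J^{\beta}g\|_{L^2}$, and for $\beta\in(0,1)$ Stein's characterization gives $\|J^{\beta}g\|_{L^2}\simeq\|g\|_{L^2}+\|\mathcal{D}^{\beta}g\|_{L^2}$. Taking $g=\langle x\rangle^m\partial_x^k f$, the estimate \eqref{Deriineq} reduces to bounding the two quantities $\|\langle x\rangle^m\partial_x^k f\|_{L^2}$ and $\|\mathcal{D}^{\beta}(\langle x\rangle^m\partial_x^k f)\|_{L^2}$ by the right-hand side of \eqref{Deriineq} (the case $\beta=0$ is just the first of these, as $D^0$ is the identity). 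I would argue first for $f$ in the Schwartz class and then pass to the general case by density, so that the manipulations below are justified.

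The algebraic heart of the argument is the expansion
\[
\langle x\rangle^m\partial_x^k f=\partial_x^k\big(\langle x\rangle^m f\big)-\sum_{j=1}^{k}\binom{k}{j}\big(\partial_x^{j}\langle x\rangle^m\big)\partial_x^{k-j}f,
\]
together with the elementary identity $\partial_x^{j}\langle x\rangle^m=\langle x\rangle^{m-j}h_j$, where each $h_j\in C_b^{\infty}(\mathbb{R})$ (a bounded, smooth function with bounded derivatives, being a polynomial in $x/\langle x\rangle$). For the pure $L^2$ term I would prove, by induction on $k$ and for every $m\ge 0$, the $\beta=0$ version of \eqref{Deriineq}: the leading term is controlled by $\|J^{k}(\langle x\rangle^m f)\|_{L^2}$ (the $l=0$ contribution when $m>0$, and the weightless term $\|J^{k-m}f\|_{L^2}$ when $m=0$), while each of the remaining terms has the lower form $\|\langle x\rangle^{m-j}\partial_x^{k-j}f\|_{L^2}$ with $j\ge 1$, to which the inductive hypothesis applies; reindexing $l=l'+j$ reproduces exactly the terms $\|J^{k-l}(\langle x\rangle^{m-l}f)\|_{L^2}$ with $l<m$ and the weightless term. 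The monotonicity $\|J^{s_1}g\|_{L^2}\le\|J^{s_2}g\|_{L^2}$ for $s_1\le s_2$ then upgrades each $J^{k-l}$ to $J^{\beta+k-l}$.

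For the Stein-derivative term I would run the same induction after applying $\mathcal{D}^{\beta}$ to the expansion. The leading term gives $\|\mathcal{D}^{\beta}\partial_x^k(\langle x\rangle^m f)\|_{L^2}\le\|J^{\beta+k}(\langle x\rangle^m f)\|_{L^2}$, the $l=0$ term. To each remaining product $h_j\,\langle x\rangle^{m-j}\partial_x^{k-j}f$ I would apply the fractional Leibniz rule \eqref{prelimneq} with bounded factor $h_j$; bounding $\|\mathcal{D}^{\beta}h_j\|_{L^{\infty}}$ through \eqref{prelimneq1} by $C(\|h_j\|_{L^{\infty}}+\|\partial_x h_j\|_{L^{\infty}})$, this reduces matters to $\|\mathcal{D}^{\beta}(\langle x\rangle^{m-j}\partial_x^{k-j}f)\|_{L^2}$ (inductive hypothesis) and the weighted $L^2$ term $\|\langle x\rangle^{m-j}\partial_x^{k-j}f\|_{L^2}$ (previous paragraph). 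The main obstacle is the index bookkeeping, in particular the case $m-j<0$: here $\langle x\rangle^{m-j}$ is itself bounded with bounded derivatives, so a further use of \eqref{prelimneq} and \eqref{prelimneq1} reduces the corresponding terms to $\|J^{\beta+k-j}f\|_{L^2}$, which, since $m<j$ in this range, is bounded by $\|J^{\beta+k-m}f\|_{L^2}$ via the monotonicity of $J^{s}$, folding everything into the single weightless term. Checking that each term produced by the two inductions matches the admissible range ($0\le l\le k$, $l<m$, or the weightless exponent $\beta+k-m$) is the step demanding the most care; the accumulation of constants is harmless, as only finitely many terms occur at each stage.
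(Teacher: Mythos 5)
Your proposal is correct and follows essentially the same route as the paper: the paper's proof is exactly a reduction via Theorem~\ref{TheoSteDer} combined with the properties \eqref{prelimneq} and \eqref{prelimneq1} (deferring the bookkeeping to (2.14) of the cited reference), and your Leibniz expansion of $\langle x\rangle^{m}\partial_x^{k}f$ with $\partial_x^{j}\langle x\rangle^{m}=\langle x\rangle^{m-j}h_j$, $h_j\in C^{\infty}_b$, plus induction on $k$, is precisely that deduction carried out in detail. The index bookkeeping you flag (reindexing $l=l'+j$, and folding the $m-j\le 0$ terms into $\|J^{\beta+k-m}f\|_{L^2}$ by monotonicity of $J^{s}$) checks out, so there is no gap.
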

\begin{proof}
The proof is a consequence of Theorem \ref{TheoSteDer} together with properties \eqref{prelimneq} and \eqref{prelimneq1}. For a deduction, see (2.14) in \cite{AndyOscarSvet}.
\end{proof}

We require the following interpolation inequalities.
\begin{proposition}\label{propweight2} For any $a,b>0$, $\theta\in (0,1)$,
\begin{equation}\label{eq1propweight2}
\|\langle x \rangle^{\theta a}\big(J^{(1-\theta)b}f\big)\|_{L^2(\mathbb{R}^N)}\leq C \|J^{b}f\|^{1-\theta}_{L^2(\mathbb{R}^N)}\|\langle x \rangle^{a} f\|_{L^2(\mathbb{R}^N)}^{\theta},
\end{equation}
\begin{equation}\label{eq2propweight2}
\|J^{\theta a}\big(\langle x \rangle^{(1-\theta)b}f\big)\|_{L^2(\mathbb{R}^N)}\leq C \|\langle x \rangle^{b}f\|^{1-\theta}_{L^2(\mathbb{R}^N)}\|J^{a} f\|_{L^2(\mathbb{R}^N)}^{\theta}.
\end{equation}
\end{proposition}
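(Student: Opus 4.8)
The plan is to prove the two interpolation inequalities \eqref{eq1propweight2} and \eqref{eq2propweight2} by recognizing each as an instance of complex interpolation (Stein--Weiss interpolation) between two endpoint weighted $L^2$ spaces, exploiting the fact that both the Bessel potential $J^s$ and the polynomial weight $\langle x\rangle^s$ are positive Fourier/multiplication operators whose complex powers form analytic families of operators of admissible growth. Concretely, for \eqref{eq1propweight2} I would set up the analytic family
\begin{equation*}
T_z f = \langle x\rangle^{za}\, J^{(1-z)b} f, \qquad z\in\mathbb{C},\ 0\le \Re z\le 1,
\end{equation*}
and check that $z\mapsto T_z$ is analytic in the interior of the strip and continuous up to the boundary, with at most admissible (exponential-of-polynomial) growth in $\Im z$, so that Stein's interpolation theorem applies. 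The target estimate is then read off at $z=\theta$ from the boundary bounds.

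First I would establish the two boundary estimates. On the line $\Re z = 0$, writing $z = iy$, the operator is $T_{iy} f = \langle x\rangle^{iay}\, J^b J^{-iby} f$; since $\langle x\rangle^{iay}$ is a bounded multiplication operator on $L^2$ (modulus one) and $J^{-iby}$ is a unitary Fourier multiplier on $L^2$ (its symbol $\langle\xi\rangle^{-iby}$ has modulus one), I get $\|T_{iy} f\|_{L^2}\le C(y)\,\|J^b f\|_{L^2}$, and I must control the constant's growth in $y$: the symbol $\langle\xi\rangle^{-iby}=\exp(-iby\log\langle\xi\rangle)$ together with $\langle x\rangle^{iay}$ yields operator norm bounded by $e^{c|y|}$ after accounting for the imaginary-order commutator/multiplier bounds, which is admissible. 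On the line $\Re z = 1$, $T_{1+iy} f = \langle x\rangle^{a}\langle x\rangle^{iay}\, J^{-iby} f$, so the same unitarity of $J^{-iby}$ and modulus-one multiplication by $\langle x\rangle^{iay}$ gives $\|T_{1+iy} f\|_{L^2}\le C(y)\,\|\langle x\rangle^{a} f\|_{L^2}$ with admissible growth. Interpolating the pair of endpoint bounds at $\Re z = \theta$ then produces exactly \eqref{eq1propweight2}. The proof of \eqref{eq2propweight2} is entirely symmetric: one interpolates the family $S_z f = J^{za}\,\langle x\rangle^{(1-z)b} f$ between $\|\langle x\rangle^{b} f\|_{L^2}$ at $\Re z = 0$ and $\|J^a f\|_{L^2}$ at $\Re z = 1$, the roles of weight and derivative simply exchanged.

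I expect the main obstacle to be the careful verification of the admissible-growth hypothesis in Stein's interpolation theorem, namely a quantitative bound of the form $\|T_{iy}\|,\|T_{1+iy}\|\le A\,e^{a|y|}$ on the two boundary lines. The modulus of the symbols being one makes the $L^2\to L^2$ boundedness immediate, but the pseudodifferential interaction between the imaginary-power weight $\langle x\rangle^{iay}$ and the imaginary-order Bessel multiplier $J^{-iby}$ requires estimating the operator norm of their product (or a commutator) with explicit polynomial-in-$y$ or exponential-in-$y$ control; this is the only genuinely technical point, since analyticity of $z\mapsto T_z f$ and continuity up to the boundary follow from standard properties of the complex powers $\langle x\rangle^z$ and $J^z$ acting on a dense class of Schwartz functions. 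Once the growth bound is in hand, the two inequalities drop out immediately from the three-lines-type conclusion of Stein interpolation. As a remark, an alternative route that sidesteps the analytic-family machinery is to prove \eqref{eq1propweight2} and \eqref{eq2propweight2} directly for integer or dyadic-localized pieces and then interpolate real-analytically, but the complex-interpolation argument is cleaner and handles the full range $\theta\in(0,1)$ and arbitrary $a,b>0$ uniformly.
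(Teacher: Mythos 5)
Your overall strategy -- Stein's analytic interpolation applied to the family $T_z f=\langle x\rangle^{za}J^{(1-z)b}f$ on the strip $0\le\Re z\le1$ -- is exactly the standard route, and it is in fact how this inequality is proved in the source the paper cites (the paper itself gives no proof of Proposition \ref{propweight2}; it simply refers to Lemma 4 of Nahas--Ponce \cite{NahasPonce2009}). The line $\Re z=0$ is also fine, and in fact better than you state: since $T_{iy}f=\langle x\rangle^{iay}J^{-iby}\bigl(J^{b}f\bigr)$ and both $\langle x\rangle^{iay}$ and $J^{-iby}$ are unitary on $L^2$, one gets $\|T_{iy}f\|_{L^2}=\|J^{b}f\|_{L^2}$ exactly, with no growth in $y$ at all.

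The genuine gap is your boundary estimate on the line $\Re z=1$. You write $T_{1+iy}f=\langle x\rangle^{a}\langle x\rangle^{iay}J^{-iby}f$ and claim that ``the same unitarity of $J^{-iby}$ and modulus-one multiplication by $\langle x\rangle^{iay}$'' gives $\|T_{1+iy}f\|_{L^2}\le C(y)\|\langle x\rangle^{a}f\|_{L^2}$. This does not follow: what is needed here is $\|\langle x\rangle^{a}J^{-iby}f\|_{L^2}\le C(y)\|\langle x\rangle^{a}f\|_{L^2}$, i.e.\ boundedness of the imaginary-order Bessel potential $J^{-iby}$ on the \emph{weighted} space $L^2(\langle x\rangle^{2a}dx)$, and unitarity of $J^{-iby}$ on unweighted $L^2$ says nothing about this because $J^{-iby}$ does not commute with the unbounded weight $\langle x\rangle^{a}$ (note also that for $2a\ge N$ the weight is not even $A_2$, so one cannot appeal to standard weighted multiplier theory). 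This weighted bound, with quantitative (say polynomial) dependence on $y$, is precisely the mathematical content of the lemma: for integer $a$ it follows from Plancherel and a Leibniz expansion, since each $\xi$-derivative of the symbol $\langle\xi\rangle^{-iby}$ produces a bounded symbol times a factor that grows polynomially in $|y|$, giving $\|\langle x\rangle^{a}J^{-iby}f\|_{L^2}\le C(1+|y|)^{a}\|\langle x\rangle^{a}f\|_{L^2}$; for fractional $a$ one must additionally invoke the Stein-derivative machinery (the operators $\mathcal{D}^{\beta}$ of Theorem \ref{TheoSteDer} and Proposition \ref{PropDbyJ}, exactly as in the proof of Lemma \ref{derivexp2}). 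The same issue, in mirror image, occurs for your family $S_z$ for \eqref{eq2propweight2}: on $\Re z=1$ one needs multiplication by $\langle x\rangle^{-iby}$ to be bounded on $H^{a}$ with controlled growth in $y$, which again is a Leibniz/fractional-Leibniz estimate, not a consequence of the multiplier having modulus one. You do flag the ``pseudodifferential interaction'' as a technical point, but you mislocate it: the issue is not merely controlling the growth rate of an operator norm that is already known to be finite; the boundedness itself on the weighted (resp.\ Sobolev) endpoint space is what must be proved, and it requires a separate argument. Once those two endpoint estimates are actually established, the three-lines conclusion does deliver \eqref{eq1propweight2} and \eqref{eq2propweight2} as you say.
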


\begin{proof}
We refer to \cite[Lemma 4]{NahasPonce2009}, see also \cite[Lemma 2.7]{LinaresPastorSilva2020}.
\end{proof}

Next, we deduce the following interpolation inequality, which is an extension of \cite[Lemma 2]{LinaresMiyaGus}.

\begin{lemma}\label{Interplemma}
Let $a, b>0$ and $x\in \mathbb{R}$. Then for any $\theta \in [0,1]$ with $(1-\theta)b \in \mathbb{Z}^{+}$,
\begin{equation}\label{Interpoeq1}
    \begin{aligned}
    \|\langle x \rangle^{\theta a}\partial_x^{(1-\theta)b}f\|_{L^2(\mathbb{R})}\leq C(\|\langle x \rangle^{a}f\|_{L^{2}(\mathbb{R})}+\|J^{b} f \|_{L^2(\mathbb{R})}).
    \end{aligned}
\end{equation}
\end{lemma}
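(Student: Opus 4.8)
The plan is to prove Lemma~\ref{Interplemma} by interpolating between the two endpoint cases $\theta=0$ and $\theta=1$, using Proposition~\ref{propweight2} as the main engine. Notice that the target inequality has the structure of an interpolation estimate: on the left we have the weight $\langle x\rangle^{\theta a}$ raised to an intermediate power $\theta$ paired with a derivative $\partial_x^{(1-\theta)b}$ of complementary order $(1-\theta)b$, while on the right the two endpoints $\|\langle x\rangle^a f\|_{L^2}$ (the pure weight, corresponding to $\theta=1$) and $\|J^b f\|_{L^2}$ (the pure derivative, corresponding to $\theta=0$) appear. The first reduction I would make is to replace the integer-order derivative $\partial_x^{(1-\theta)b}$ by the Bessel potential $J^{(1-\theta)b}$: since $(1-\theta)b\in\mathbb{Z}^+$ (or is zero), we have the pointwise-in-frequency domination $\|\partial_x^{(1-\theta)b}g\|_{L^2}\le \|J^{(1-\theta)b}g\|_{L^2}$ because $|\xi|^{(1-\theta)b}\le \langle\xi\rangle^{(1-\theta)b}$, so it suffices to bound $\|\langle x\rangle^{\theta a}J^{(1-\theta)b}f\|_{L^2}$.

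With that reduction in hand, the core of the argument is a direct application of inequality \eqref{eq1propweight2} from Proposition~\ref{propweight2}, which states precisely that
\begin{equation*}
\|\langle x \rangle^{\theta a}\big(J^{(1-\theta)b}f\big)\|_{L^2(\mathbb{R})}\leq C \|J^{b}f\|^{1-\theta}_{L^2(\mathbb{R})}\|\langle x \rangle^{a} f\|_{L^2(\mathbb{R})}^{\theta}.
\end{equation*}
This gives a multiplicative (product) bound $X^{1-\theta}Y^\theta$ where $X=\|J^b f\|_{L^2}$ and $Y=\|\langle x\rangle^a f\|_{L^2}$. To convert this into the additive (sum) form $C(X+Y)$ demanded by \eqref{Interpoeq1}, I would invoke Young's inequality for products, or simply the elementary bound $X^{1-\theta}Y^\theta \le X+Y$ valid for $X,Y\ge 0$ and $\theta\in[0,1]$ (a weighted AM--GM inequality). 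Combining this with the derivative-to-Bessel-potential domination from the first step closes the estimate for all interior values $\theta\in(0,1)$.

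Finally I would treat the two endpoints separately, since Proposition~\ref{propweight2} is stated for $\theta\in(0,1)$ and the lemma allows the closed interval $[0,1]$. At $\theta=0$ the inequality reads $\|\partial_x^b f\|_{L^2}\le C(\|f\|_{L^2}+\|J^b f\|_{L^2})$, which is immediate from $|\xi|^b\le\langle\xi\rangle^b$; at $\theta=1$ it reads $\|\langle x\rangle^a f\|_{L^2}\le C(\|\langle x\rangle^a f\|_{L^2}+\|f\|_{L^2})$, which is trivial. I anticipate no serious obstacle here: the only point requiring mild care is confirming that the hypothesis $(1-\theta)b\in\mathbb{Z}^+$ is exactly what licenses the passage from the homogeneous derivative $\partial_x^{(1-\theta)b}$ to the inhomogeneous $J^{(1-\theta)b}$ without an extra low-frequency correction, and ensuring the constant $C$ in the AM--GM step does not depend on $\theta$ in a way that degenerates at the endpoints. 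Since the product-to-sum conversion is uniform in $\theta$, the whole argument assembles cleanly.
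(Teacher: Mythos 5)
Your second and third steps are fine: once matters are reduced to bounding $\|\langle x\rangle^{\theta a}J^{(1-\theta)b}f\|_{L^2}$, inequality \eqref{eq1propweight2} of Proposition \ref{propweight2} together with Young's inequality (and the trivial endpoint cases) does finish the argument, and this product-to-sum conversion is essentially how the paper also concludes. The genuine gap is your first step. The assertion that it ``suffices to bound $\|\langle x\rangle^{\theta a}J^{(1-\theta)b}f\|_{L^2}$'' requires the \emph{weighted} inequality
\begin{equation*}
\|\langle x\rangle^{\theta a}\,\partial_x^{k}f\|_{L^2}\le C\,\|\langle x\rangle^{\theta a}\,J^{k}f\|_{L^2},\qquad k=(1-\theta)b,
\end{equation*}
and this does not follow from the pointwise domination $|\xi|^{k}\le\langle\xi\rangle^{k}$. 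Plancherel converts symbol domination into the \emph{unweighted} bound $\|\partial_x^k g\|_{L^2}\le\|J^k g\|_{L^2}$, but the weight does not commute with these Fourier multipliers: under the Fourier transform, multiplication by $\langle x\rangle^{\theta a}$ becomes the nonlocal operator $(1-\partial_\xi^2)^{\theta a/2}$ acting in frequency, and pointwise inequalities between symbols are not preserved by such operators. In general, $|\hat g|\le|\hat h|$ pointwise gives no control of $\|\langle x\rangle^{\sigma}g\|_{L^2}$ by $\|\langle x\rangle^{\sigma}h\|_{L^2}$: for instance, $\hat h(\xi)=\xi e^{-\xi^2}$ and $\hat g(\xi)=|\xi|e^{-\xi^2}$ satisfy $|\hat g|=|\hat h|$, yet $g$ decays only like $|x|^{-2}$ because $\hat g$ has a corner at $\xi=0$, so $\|\langle x\rangle^{\sigma}g\|_{L^2}=\infty$ for $\sigma\ge\tfrac32$ while $h$ is Schwartz. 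Spatial weights measure frequency smoothness, and that is exactly what symbol comparison fails to respect.

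To be fair, the intermediate inequality you want is actually true: it amounts to the boundedness of the zero-order operator $\partial_x^{k}J^{-k}$ on $L^2(\langle x\rangle^{2\theta a}dx)$, which holds because its convolution kernel is a Calder\'on--Zygmund kernel decaying faster than any polynomial away from the origin (the symbol $(i\xi)^k\langle\xi\rangle^{-k}$ is smooth with symbol-class decay of all its derivatives), so a near-diagonal/far-field splitting or a commutator expansion closes it. But that is precisely the nontrivial content of the lemma --- the interaction of the weight with the derivatives --- and it is the work your proposal skips. The paper supplies it through the Leibniz/commutator identity: it proves the claim \eqref{eqInterpoeq2} by writing $\langle x\rangle^{\mu}\partial_x^{r}f=\sum_{l}c_l\,\partial_x^{r-l}\big(\partial_x^{l}(\langle x\rangle^{\mu})f\big)$, estimating the commutator terms, and thereby moving the weight inside the derivatives at the cost of lower-order weighted terms $J^{r-l}(\langle x\rangle^{\mu-l}f)$; only then does Proposition \ref{propweight2} (there in the form \eqref{eq2propweight2}) apply. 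So your overall architecture (reduce to Proposition \ref{propweight2}, then convert the product bound to a sum) matches the paper's, but the reduction itself must be replaced by an actual argument --- either the paper's commutator expansion or a weighted kernel estimate for $\partial_x^{k}J^{-k}$. A last cosmetic point: at $\theta=1$ the hypothesis $(1-\theta)b\in\mathbb{Z}^{+}$ formally excludes that endpoint, so your trivial treatment of it, while harmless, is not even needed.
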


\begin{proof}
Let $\mu>0$ and an integer $r\in \mathbb{Z}^{+}\cup\{0\}$. We claim 
\begin{equation}\label{eqInterpoeq2}
    \begin{aligned}
    \|\langle x \rangle^{\mu}\partial_x^{r}f\|_{L^2}\leq C\sum_{\substack{0\leq l \leq r\\ \mu-l>0 }} \|J^{r-l}\big(\langle x \rangle^{\mu-l} f \big)\|_{L^2}+\|J^r f\|_{L^2}.
    \end{aligned}
\end{equation}
Once we establish \eqref{eqInterpoeq2}, Proposition \ref{propweight2} and \eqref{eqInterpoeq2} yield \eqref{Interpoeq1}. To prove \eqref{eqInterpoeq2} we split the expression on the left-hand side as follows
\begin{equation*}
\begin{aligned}
 \langle x \rangle^{\mu}\partial_x^{r}f=\sum_{l=0}^r c_l\partial_x^{r-l}\big(\partial_x^{l}(\langle x \rangle^{\mu})f\big)=&\sum_{\substack{0\leq l \leq r\\ \mu-l\leq 0}} (\cdots) +\sum_{\substack{0\leq l \leq r\\ \mu-l> 0}} (\cdots)=: \, A_1+A_2.
\end{aligned}
\end{equation*}
We rewrite $A_1$ as
\begin{equation*}
    \begin{aligned}
    A_1=\sum_{\substack{0\leq l \leq r\\ \mu-l\leq 0}}c_l[\partial_x^{r-l},\partial_x^{l}(\langle x \rangle^{\mu})]f+\sum_{\substack{0\leq l \leq r\\ \mu-l\leq 0}}c_l\partial_x^{l}(\langle x \rangle^{\mu}) \partial_x^{r-l}f,
    \end{aligned}
\end{equation*}
where $[A,B]=AB-BA$ denotes the commutator between the operators $A$ and $B$. Then, since $|\partial^{l+\beta}_x(\langle x \rangle^{\mu})|\leq c<\infty$, whenever $\mu-l \leq 0$ and $\beta \geq 0$ integer, it follows
\begin{equation*}
    \|A_1\|_{L^2}\leq C \|J^{r}f\|_{L^2}.
\end{equation*}
To estimate $A_2$, we write
\begin{equation*}
    \begin{aligned}
    A_2=&\sum_{\substack{0\leq l \leq r\\ \mu-l> 0}} c_l \partial_x^{r-l}\Big(\frac{\partial_x^{l}(\langle x \rangle^{\mu})}{\langle x \rangle^{\mu-l}} \langle x \rangle^{\mu-l} f\Big)\\
    =&\sum_{\substack{0\leq l \leq r\\ \mu-l>0 }}c_l \Big[\partial_x^{r-l},\frac{\partial_x^{l}(\langle x \rangle^{\mu})}{\langle x \rangle^{\mu-l}}\Big] \big( \langle x \rangle^{\mu-l}f\big)+\sum_{\substack{0\leq l \leq r\\ \mu-l> 0}}c_l \frac{\partial_x^{l}(\langle x \rangle^{\mu})}{\langle x \rangle^{\mu-l}} \partial_x^{r-l}\big(\langle x \rangle^{\mu-l} f \big).
    \end{aligned}
\end{equation*}
Thus, we get
\begin{equation*}
    \begin{aligned}
    \|A_2\|_{L^2}\leq C \sum_{\substack{0\leq l \leq r\\ \mu-l>0 }} \|J^{r-l}\big(\langle x \rangle^{\mu-l} f \big)\|_{L^2}.
    \end{aligned}
\end{equation*}
Gathering the estimates for $A_1$ and $A_2$, we deduce \eqref{eqInterpoeq2}.
\end{proof}

We recall the following result deduced in \cite[Theorem 1]{FonsecaLinaresPonce2015}.

\begin{lemma}\label{fractderLiner}
Let $\beta \in (0,1)$ and $U(t)$, $t \in \mathbb{R}$, be the unitary group of operators defined in \eqref{Uoperdef}. If
\begin{equation*}
    f \in H^{2\beta}(\mathbb{R})\cap L^{2}(|x|^{2\beta}\, dx),
\end{equation*}
then for all $t\in \mathbb{R}$ and almost every $x\in \mathbb{R}$
\begin{equation*}
    \begin{aligned}
|x|^{\beta}U(t)f=U(t)\big(|x|^{\beta}f\big)(x)+U(t)\big\{\Phi_{t,\beta}(\widehat{f})(\xi)\big\}^{\vee}(x)
    \end{aligned}
\end{equation*}
where
\begin{equation*}
    \Phi_{t,\beta}(\widehat{f})(\xi)=\lim_{\epsilon \to 0}\frac{1}{c_{\beta}}\int_{|y|\geq \epsilon}\frac{e^{it((\xi+y)^3-\xi^3)}-1}{|y|^{1+\beta}} \widehat{f}(\xi+y)\, dy, 
\end{equation*}
 $c_{\beta}=\pi 2^{-\beta-1}\Gamma(-\frac{\beta}{2})$, and for some $C>0$ independent of $f$ and $t$,
\begin{equation*}
    \|\big\{\Phi_{t,\beta}(\widehat{f})(\xi)\big\}^{\vee}\|_{L^2}\leq C\langle t \rangle\big(\|f\|_{L^2}+\|D^{2\beta} f\|_{L^2} \big).
\end{equation*}
\end{lemma}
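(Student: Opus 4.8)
\emph{Overall approach.} Both the identity and the estimate come from transferring the multiplication by $|x|^\beta$ to the frequency side, where it becomes the fractional Laplacian $D_\xi^\beta=(-\Delta_\xi)^{\beta/2}$ (in $\xi$) acting on $\widehat{U(t)f}(\xi)=e^{it\xi^3}\widehat f(\xi)$. The plan is to use the classical singular-integral representation underlying Theorem \ref{TheoSteDer}, namely $D_\xi^\beta g(\xi)=\frac{1}{c_\beta}\,\mathrm{p.v.}\!\int_{\mathbb R}\frac{g(\xi)-g(\xi+y)}{|y|^{1+\beta}}\,dy$ (valid for $\beta\in(0,1)$), applied to $g(\xi)=e^{it\xi^3}\widehat f(\xi)$. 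Factoring $e^{it\xi^3}$ out of $g(\xi)-g(\xi+y)$ and writing
\[
\widehat f(\xi)-e^{it((\xi+y)^3-\xi^3)}\widehat f(\xi+y)=\big(\widehat f(\xi)-\widehat f(\xi+y)\big)-\big(e^{it((\xi+y)^3-\xi^3)}-1\big)\widehat f(\xi+y)
\]
splits $D_\xi^\beta(e^{it\xi^3}\widehat f)$ into $e^{it\xi^3}D_\xi^\beta\widehat f$, whose inverse transform is $U(t)(|x|^\beta f)$, and $-e^{it\xi^3}\Phi_{t,\beta}(\widehat f)$. Taking the inverse Fourier transform and absorbing the normalization constants (which is where $c_\beta$ enters) yields exactly the stated decomposition $|x|^\beta U(t)f=U(t)(|x|^\beta f)+U(t)\{\Phi_{t,\beta}(\widehat f)\}^\vee$.

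\emph{Reduction of the estimate.} Since $U(t)$ is unitary on $L^2$ and the inverse Fourier transform is an $L^2$-isometry up to a constant, I would reduce the claimed bound to $\|\Phi_{t,\beta}(\widehat f)\|_{L^2_\xi}\leq C\langle t\rangle\big(\|\widehat f\|_{L^2}+\||\xi|^{2\beta}\widehat f\|_{L^2}\big)$, the right-hand side being comparable to $\|f\|_{L^2}+\|D^{2\beta}f\|_{L^2}$ by Plancherel. Everything then reduces to an $L^2_\xi$ bound for the integral operator with kernel $K_t(\xi,y)=\frac{e^{it((\xi+y)^3-\xi^3)}-1}{|y|^{1+\beta}}$ convolved against $\widehat f(\xi+y)$.

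\emph{The kernel estimate.} Setting $\psi(\xi,y)=t\,y\,(3\xi^2+3\xi y+y^2)$, I split the $y$-integral at $|y|=1$. For $|y|>1$ the trivial bound $|e^{i\psi}-1|\leq2$ makes $|K_t|\leq2|y|^{-1-\beta}$ integrable with no growth in $t$. For $|y|\leq1$ one has $|\psi|\leq C|t||y|\langle\xi\rangle^2$, so $|K_t(\xi,y)|\leq\min\big(C|t||y|\langle\xi\rangle^2,\,2\big)|y|^{-1-\beta}$; balancing the two bounds at the threshold $|y|\simeq(|t|\langle\xi\rangle^2)^{-1}$ (use $|e^{i\psi}-1|\leq|\psi|$ below it, $|e^{i\psi}-1|\leq2$ above it) gives, after integrating in $y$, the crucial estimate $\int_{|y|\leq1}|K_t(\xi,y)|\,dy\leq C|t|^\beta\langle\xi\rangle^{2\beta}$. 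I would then conclude the $L^2$ bound by Schur's test applied to the kernel $K_t(\xi,\eta-\xi)\langle\eta\rangle^{-2\beta}$ (the weight $\langle\eta\rangle^{-2\beta}$ absorbing the derivative gain, so that $\Phi_{t,\beta}(\widehat f)=S\big(\langle\cdot\rangle^{2\beta}\widehat f\big)$ for a bounded $S$): using $\langle\xi\rangle\simeq\langle\xi+y\rangle$ on $|y|\leq1$, both Schur row/column integrals are controlled by the near-region bound $|t|^\beta\langle\cdot\rangle^{2\beta}$ times $\langle\cdot\rangle^{-2\beta}$ plus the integrable far-region tail, giving operator norm $\leq C\langle t\rangle$. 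Finally $|t|^\beta\leq\langle t\rangle$ and $\langle\xi\rangle^{2\beta}\leq C(1+|\xi|^{2\beta})$ deliver the stated bound.

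\emph{Main obstacle.} The delicate point is the near region $|y|\leq1$: the phase carries a factor $\xi^2$, so the naive estimate $|e^{i\psi}-1|\leq|\psi|$ would cost two full $\xi$-derivatives, far exceeding the $2\beta$ allowed, while a single interpolated exponent $|e^{i\psi}-1|\leq|\psi|^{s}$ forces $s>\beta$ for integrability in $y$ and hence overshoots $2\beta$ as well. The $\xi$-dependent balancing at scale $(|t|\langle\xi\rangle^2)^{-1}$ is precisely what trades oscillation for exactly the fractional gain $\langle\xi\rangle^{2\beta}$, and—because the resulting effective multiplier is unbounded in $\xi$—the convolution must be handled by a weighted Schur test rather than by Young or Minkowski. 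Making this rigorous, together with justifying the principal value, the factoring of $e^{it\xi^3}$, and the Fubini interchanges for $f\in H^{2\beta}\cap L^2(|x|^{2\beta}dx)$, is the technical heart of the argument, carried out in \cite{FonsecaLinaresPonce2015}.
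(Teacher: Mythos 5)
The paper does not actually prove Lemma \ref{fractderLiner}: it is quoted verbatim from \cite[Theorem 1]{FonsecaLinaresPonce2015}, so the only meaningful comparison is with that reference's argument. Your proposal is correct and reconstructs essentially that argument --- the hypersingular (principal-value) representation of $D^{\beta}_{\xi}$ applied to $e^{it\xi^{3}}\widehat{f}$ gives exactly the stated identity, and the $L^{2}$ bound follows from your far/near splitting in $y$ with the $\xi$-dependent balancing at $|y|\simeq(|t|\langle\xi\rangle^{2})^{-1}$, which yields the gain $|t|^{\beta}\langle\xi\rangle^{2\beta}\leq C\langle t\rangle\langle\xi\rangle^{2\beta}$ and is then closed by the weighted Schur/Cauchy--Schwarz step using $\langle\xi\rangle\simeq\langle\xi+y\rangle$ on $|y|\leq 1$.
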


We are now in a position to establish the following estimate relating solutions of the linear KdV equation and fractional weights.

\begin{lemma}\label{derivexp2}
Let $m \in \mathbb{R}^{+}$. Then for any $t\in \mathbb{R}$, there exists $C>0$ such that
\begin{equation*}
\|\langle x \rangle^{m}U(t)f\|_{L^2} \leq C \langle t \rangle^{\lfloor m \rfloor+1}\big( \|J^{2m}f\|_{L^2}+\|\langle x\rangle^m f\|_{L^2} \big).
\end{equation*}
\end{lemma}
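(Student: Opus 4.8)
The plan is to separate $m$ into its integer and fractional parts, $m=n+\beta$ with $n=\lfloor m\rfloor$ and $\beta=m-n\in[0,1)$, to absorb the integer weight $x^n$ through a commutator expansion of $x^nU(t)$, and then to peel off the remaining fractional weight $|x|^{\beta}$ by a single application of the Fonseca--Linares--Ponce identity (Lemma~\ref{fractderLiner}). First I would reduce to a homogeneous weight: since $\langle x\rangle^m\le C(1+|x|^m)$ and $U(t)$ is unitary on $L^2$, it suffices to bound $\||x|^mU(t)f\|_{L^2}$, and because $|x|^{2m}=|x|^{2\beta}(x^n)^2$ pointwise, I may rewrite
\[
\||x|^mU(t)f\|_{L^2}=\big\||x|^{\beta}\,(x^nU(t)f)\big\|_{L^2}.
\]
When $\beta=0$ (i.e.\ $m$ an integer) the fractional step is vacuous and the commutator bound alone suffices, giving the weaker power $\langle t\rangle^{n}\le\langle t\rangle^{\lfloor m\rfloor+1}$.

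The arithmetic core is a commutator identity. Using the Fourier representation \eqref{Uoperdef}, multiplication by $x$ corresponds to $i\partial_\xi$, and since $\partial_\xi e^{it\xi^3}=3it\xi^2e^{it\xi^3}$, an integration by parts in $\xi$ gives
\[
x\,U(t)g=U(t)(xg)+3t\,U(t)\big(\partial_x^2g\big).
\]
Iterating this $n$ times (an easy induction on $n$) produces
\[
x^nU(t)f=U(t)\Big(\sum c_{b,d,e}\,t^{b}\,x^{d}\,\partial_x^{e}f\Big)=:U(t)\big(P_nf\big),
\]
where the finitely many multi-indices satisfy the sharp relations $3b+d-e=n$ together with $e\le 2b$ (equivalently $b+d\le n$), which I would verify by tracking them through the induction. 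These constraints are precisely what keep every term balanced: for each term the weight-plus-half-derivative budget obeys $d+\tfrac{e}{2}\le d+b\le n\le m$, and the derivative order obeys $e\le 2b\le 2n\le 2m$.

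With this expansion I apply Lemma~\ref{fractderLiner} to $U(t)(P_nf)$ with exponent $\beta$, obtaining
\[
\big\||x|^{\beta}U(t)(P_nf)\big\|_{L^2}\le\big\||x|^{\beta}P_nf\big\|_{L^2}+C\langle t\rangle\big(\|P_nf\|_{L^2}+\|D^{2\beta}P_nf\|_{L^2}\big).
\]
The first two terms carry at most $\langle t\rangle^{n}$ from the powers $t^b$ with $b\le n$, and the last carries $\langle t\rangle^{n+1}$, so the total temporal growth is at most $\langle t\rangle^{\lfloor m\rfloor+1}$. It remains to dominate the three spatial pieces by $\|J^{2m}f\|_{L^2}+\|\langle x\rangle^m f\|_{L^2}$. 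For $\|P_nf\|_{L^2}$ and $\||x|^{\beta}P_nf\|_{L^2}$ this follows from the interpolation inequalities of Proposition~\ref{propweight2} and Lemma~\ref{Interplemma}, since each term has weight order at most $d+\beta$ and derivative order $e$ with $(d+\beta)+\tfrac{e}{2}\le m$ and $e\le 2m$. For $\|D^{2\beta}P_nf\|_{L^2}$ I would distribute the $2\beta$ derivatives termwise with Proposition~\ref{PropDbyJ} (applied with weight exponent $d$, derivative order $e$, and fractional order $2\beta$, up to the harmless smooth bounded factor $x^d\langle x\rangle^{-d}$), first moving one integer derivative onto the product when $2\beta\ge1$ so that only an order in $(0,1)$ acts on the weight; then a further application of Proposition~\ref{propweight2} returns the two clean norms, the degree constraints guaranteeing $2\beta+e\le 2m$ and that all surviving weight/derivative orders stay on or below the line $\text{weight}+\tfrac12\text{deriv}=m$.

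The main obstacle is precisely this last bookkeeping: one must ensure that Lemma~\ref{fractderLiner} is invoked \emph{only once}, so the temporal factor stays at exactly $\langle t\rangle^{\lfloor m\rfloor+1}$ rather than accumulating one power of $\langle t\rangle$ per unit of weight, and simultaneously that after redistributing the $2\beta$ extra derivatives every resulting mixed norm respects both the regularity ceiling $2m$ and the interpolation balance $\le m$; this is exactly where the sharp constraints $e\le 2b$ and $b+d\le n$ from the commutator expansion are essential. A minor preliminary point is to justify, by a density argument, that $f$ may be taken smooth with sufficient decay so that $P_nf\in H^{2\beta}\cap L^2(|x|^{2\beta}\,dx)$ and Lemma~\ref{fractderLiner} legitimately applies before passing to the limit.
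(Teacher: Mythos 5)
Your proposal is correct and is essentially the paper's own argument run on the physical-space side: the paper makes the identical splitting $m=m_1+m_2$ into integer and fractional parts, realizes your commutator expansion $x^{n}U(t)f=U(t)(P_nf)$ as the Plancherel-dual Leibniz expansion of $\frac{d^{m_1}}{d\xi^{m_1}}\big(e^{it\xi^3}\widehat{f}\,\big)$ (your identity $xU(t)g=U(t)(xg)+3t\,U(t)\partial_x^2g$ is exactly the formula for $\frac{d}{d\xi}\big(e^{it\xi^3}\widehat{g}\,\big)$, with the same homogeneity bookkeeping $t^{k-l}\xi^{2k-3l}$), invokes Lemma \ref{fractderLiner} exactly once to move the fractional piece past the cubic phase --- the sole source of the extra factor $\langle t\rangle$ --- and closes with Proposition \ref{PropDbyJ} and the interpolation inequalities of Proposition \ref{propweight2}, just as you do. The differences are cosmetic: frequency-side versus physical-side bookkeeping, and the ordering of the weight and derivative factors in the expansion.
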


\begin{proof}
We write $m=m_1+m_2$, where $m_1\in \mathbb{Z}^{+}\cup \{0\}$, $m_2\in [0,1)$. Then by Plancherel's identity and Leibniz's rule, we deduce
\begin{equation*}
\begin{aligned}
\|\langle x\rangle^{m}U(t)f\|_{L^2}\leq & C \|U(t)f\|_{L^2}+C\||x|^{m_2}|x|^{m_1}U(t)f\|_{L^2} \\
\leq & C \|f\|_{L^2}+C\|D^{m_2}\big(\frac{d^{m_1}}{d\xi^{m_1}}(e^{it\xi^3}\widehat{f} \, )\big)\|_{L^2}\\
\leq & C \|f\|_{L^2}+C\sum_{k=0}^{m_1}\|D^{m_2}\big(\frac{d^{k}}{d\xi^k}(e^{it\xi^3})\frac{d^{m_1-k}}{d\xi^{m_1-k}}\widehat{f} \, )\big)\|_{L^2},
\end{aligned}
\end{equation*}
when $m_2=0$, we will assume that $D^{m_2}$ is the identity operator. To estimate the right-hand side of the above expression, we use a simple inductive argument to obtain the following identity
\begin{equation*}
\begin{aligned}
\frac{d^{k}}{d\xi^k}(e^{it\xi^3})=e^{it\xi^3}\sum_{l=0}^{\lfloor \frac{2k}{3} \rfloor}c_l t^{k-l}\xi^{2k-3l},
\end{aligned}
\end{equation*}
for some constants $c_l$, $l=0,1,\dots,\lfloor \frac{2 k}{3} \rfloor$ and any $k\geq 0$. Then, an application of Lemma \ref{fractderLiner} in the frequency domain yields
\begin{equation}\label{eqWLEQ0}
    \begin{aligned}
    \sum_{k=0}^{m_1}\|&D^{m_2}\big(\frac{d^{k}}{d\xi^k}(e^{it\xi^3})\frac{d^{m_1-k}}{d\xi^{m_1-k}}\widehat{f} \, )\big)\|_{L^2} \\
    \leq \,  C \, & \langle t \rangle^{m_1+1} \sum_{k=0}^{m_1}\sum_{l=0}^{\lfloor \frac{2k}{3} \rfloor} \Big(\|D^{m_2}\big(\xi^{2k-3l}\frac{d^{m_1-k}}{d\xi^{m_1-k}}\widehat{f}\,\big)\|_{L^2}
     + \|\langle \xi \rangle^{2m_2}\big(\xi^{2k-3l}\frac{d^{m_1-k}}{d\xi^{m_1-k}}\widehat{f}\, \big)\|_{L^2}\Big).
     \end{aligned}
     \end{equation}
Setting $0\leq k \leq m_1$, $0\leq l \leq \lfloor \frac{2k}{3} \rfloor$, when $m_2>0$, we use Theorem \ref{TheoSteDer}, together with \eqref{prelimneq}  and \eqref{prelimneq1} to deduce
\begin{equation}\label{eqWLEQ1}
\begin{aligned}
\|D^{m_2}&\big(\xi^{2k-3l}\frac{d^{m_1-k}}{d\xi^{m_1-k}}\widehat{f\,}\big)\|_{L^2}\\
\leq & C \|\langle \xi \rangle^{2m_2+2k-3l}\frac{d^{m_1-k}}{d\xi^{m_1-k}}\widehat{f}\|_{L^2}+C \|\mathcal{D}^{m_2}\big(\frac{\xi^{2k-3l}}{\langle \xi \rangle^{2k-3l}}\langle \xi \rangle^{2k-3l}\frac{d^{m_1-k}}{d\xi^{m_1-k}}\widehat{f}\,\big)\|_{L^2}\\
\leq & C \|\langle \xi \rangle^{2m_2+2k-3l}\frac{d^{m_1-k}}{d\xi^{m_1-k}}\widehat{f}\|_{L^2}+C \|\mathcal{D}^{m_2}\big(\frac{\xi^{2k-3l}}{\langle \xi \rangle^{2k-3l}}\big)\langle \xi \rangle^{2k-3l}\frac{d^{m_1-k}}{d\xi^{m_1-k}}\widehat{f\,}\|_{L^2}\\
&+C \|\frac{\xi^{2k-3l}}{\langle \xi \rangle^{2k-3l}}\mathcal{D}^{m_2}\big(\langle \xi \rangle^{2k-3l}\frac{d^{m_1-k}}{d\xi^{m_1-k}}\widehat{f\,}\big)\|_{L^2}\\
\leq & C \|\langle \xi \rangle^{2m_2+2k-3l}\frac{d^{m_1-k}}{d\xi^{m_1-k}}\widehat{f}\|_{L^2}+C\|D^{m_2}\big(\langle \xi \rangle^{2k-3l}\frac{d^{m_1-k}}{d\xi^{m_1-k}}\widehat{f\,}\big)\|_{L^2},
    \end{aligned}
\end{equation}
where we have used the fact that the function $\xi^{2k-3l}\langle \xi \rangle^{-2k+3l}$ and its derivative belong to $L^{\infty}(\mathbb{R})$. Notice that the same bound in \eqref{eqWLEQ1} is valid when $m_2=0$. Thus, to complete the estimate of \eqref{eqWLEQ0}, it is enough to bound the right-hand side of \eqref{eqWLEQ1}. When $0\leq k <m_1$, by Proposition \ref{PropDbyJ} and the interpolation inequality in Proposition \ref{propweight2}, we have
\begin{equation}\label{eqWLEQ2}
    \begin{aligned}
    \|&\langle \xi \rangle^{2m_2+2k-3l}\frac{d^{m_1-k}}{d\xi^{m_1-k}}\widehat{f}\|_{L^2} \\
    &\leq C \sum_{\substack{0\leq r \leq k \\ r<2m_2+2k-3l}}\|J^{m_1-k-r}\big(\langle \xi \rangle^{2m_2+2k-3l-r}\widehat{f\,}\big)\|_{L^2}+\|J^{m_1-k-2m_2-2k+3l}\widehat{f}\|_{L^2}\\
    &\leq C \sum_{\substack{0\leq r \leq k \\ r<2m_2+2k-3l}} \|\langle \xi \rangle^{2m_1+2m_2}\widehat{f}\|_{L^2}^{\frac{2m_2+2k-3l-r}{2m_1+2m_2}}\|J^{\frac{(2m_1+2m_2)(m_1-k-r)}{2m_1-2k+3l+r}}\widehat{f}\|_{L^2}^{\frac{2m_1-2k+3l+r}{2m_1+2m_2}} \\
    &\hspace{2cm}+\|J^{m_1-k-2m_2-2k+3l}\widehat{f}\|_{L^2} \\
     &\leq  C\|\langle \xi \rangle^{2m_1+2m_2} \widehat{f}\|_{L^2}+C\|J^{m_1+m_2} \widehat{f}\|_{L^2}, 
    \end{aligned}
\end{equation}
where in the last line above we used Young's inequality and the fact that $m_1-k-2m_2-2k+3l, \, \frac{2m_1-2k+3l+r}{2m_1+2m_2}\leq m_1+m_2$. Notice that the same upper bound in \eqref{eqWLEQ2} is still valid when $k=m_1$. This remark completes the estimate for the first term on the right-hand side of \eqref{eqWLEQ1}. The estimate for the remaining factor on the right-hand side of \eqref{eqWLEQ1} follows by similar arguments as above. Using Propositions \ref{PropDbyJ} and  \ref{propweight2}, we conclude
\begin{equation}\label{eqWLEQ3}
    \begin{aligned}
    \|D^{m_2}\big(\langle \xi \rangle^{2k-3l}\frac{d^{m_1-k}}{d\xi^{m_1-k}}\widehat{f\,}\big)\|_{L^2} &\leq  C\|\langle \xi \rangle^{2m_1+2m_2} \widehat{f}\|_{L^2}+C\|J^{m_1+m_2} \widehat{f}\|_{L^2}, 
    \end{aligned}
\end{equation}
for all $0\leq k\leq m_1$, $0\leq l \leq \lfloor \frac{2k}{3} \rfloor$. Gathering \eqref{eqWLEQ0}, \eqref{eqWLEQ2}, \eqref{eqWLEQ3}, and using Plancherel's identity, we obtain the desired result.
\end{proof}

We also require the following sharp (homogeneous) version of Kato smoothing effect deduced in \cite{KenigPonceVega1993}. 

\begin{lemma}[Kenig-Ponce-Vega \cite{KenigPonceVega1993}]\label{KatoS}
Let $U(t)$, $t\in\mathbb{R}$, denote the unitary group describing the solution of the linear KdV equation defined in \eqref{Uoperdef}. Then, for all $ f\in{L^2(\mathbb{R})}$ complex or real valued, 
$$\|U(t)f\|_{L_t^\infty L_x^2}+\|\partial_xU(t)f\|_{L_x^\infty L_t^2}=(1+\frac{1}{\sqrt{3}})\|f\|_{L^2}.$$
\end{lemma}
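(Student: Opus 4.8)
The plan is to establish the two summands separately, since each turns out to be an exact equality; adding them then gives the stated identity. For the first term, observe that for each fixed $t$ the operator $U(t)$ is the Fourier multiplier with symbol $e^{it\xi^3}$, which is unimodular, so $U(t)$ is an isometry of $L^2_x(\mathbb{R})$ and $\|U(t)f\|_{L^2_x}=\|f\|_{L^2}$ for every $t$; taking the supremum over $t$ gives $\|U(t)f\|_{L^\infty_t L^2_x}=\|f\|_{L^2}$. It therefore remains to prove the sharp local smoothing identity $\|\partial_x U(t)f\|_{L^\infty_x L^2_t}=\tfrac{1}{\sqrt3}\|f\|_{L^2}$.

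For this second term, I would fix $x\in\mathbb{R}$ and regard $t\mapsto \partial_x U(t)f(x)=\int i\xi\, e^{it\xi^3}e^{ix\xi}\widehat{f}(\xi)\,d\xi$ as a function of $t$ alone, working first with $f$ in the Schwartz class and extending by density. The key move is the change of variables $\eta=\xi^3$, a smooth increasing bijection of $\mathbb{R}$ with $d\eta=3\xi^2\,d\xi$, under which the integral becomes $\int e^{it\eta}H_x(\eta)\,d\eta$ with $H_x(\eta)=\tfrac{i}{3\xi(\eta)}e^{ix\xi(\eta)}\widehat{f}(\xi(\eta))$, i.e.\ an inverse Fourier transform in the $t$ variable. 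Applying Plancherel in $t$ and then undoing the substitution, the Jacobian exactly cancels the singular factor:
\begin{equation*}
\int_{\mathbb{R}}\frac{1}{9\,\xi(\eta)^2}\,|\widehat{f}(\xi(\eta))|^2\,d\eta=\int_{\mathbb{R}}\frac{1}{9\xi^2}|\widehat{f}(\xi)|^2\,3\xi^2\,d\xi=\frac13\int_{\mathbb{R}}|\widehat{f}(\xi)|^2\,d\xi.
\end{equation*}
The decisive point is that the modulating factor $e^{ix\xi}$ is unimodular and disappears upon taking $|\cdot|^2$, so the resulting quantity is \emph{independent of} $x$; keeping track of the $2\pi$ from the Fourier normalization fixed in \eqref{Uoperdef} converts $\tfrac13\|\widehat{f}\|_{L^2}^2$ into $\tfrac13\|f\|_{L^2}^2$, giving $\|\partial_x U(t)f(x)\|_{L^2_t}=\tfrac1{\sqrt3}\|f\|_{L^2}$ for every $x$, and hence the same value after taking $L^\infty_x$.

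The main obstacle is not conceptual but a matter of executing the sharp bookkeeping: since the claim is an equality (not merely a $\lesssim$ bound), every constant must be tracked exactly, which forces care with the Fourier convention and with the behavior of the substitution $\eta=\xi^3$ near $\xi=0$, where the map degenerates but the singularity $1/\xi$ in $H_x$ is rendered harmless by the weight $3\xi^2\,d\xi$. A density argument from Schwartz functions to general $f\in L^2$ justifies both the change of variables and the application of Plancherel, after which adding the two exact identities yields $\|U(t)f\|_{L^\infty_t L^2_x}+\|\partial_x U(t)f\|_{L^\infty_x L^2_t}=(1+\tfrac{1}{\sqrt3})\|f\|_{L^2}$.
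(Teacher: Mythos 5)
Your proof is correct: the unitarity of $U(t)$ on $L^2_x$ gives the first summand exactly, and your change of variables $\eta=\xi^3$ followed by Plancherel in $t$ (with the Jacobian $3\xi^2\,d\xi$ cancelling the $1/(9\xi^2)$ singularity, and the factor $e^{ix\xi}$ dropping out so the $L^2_t$ norm is independent of $x$) yields the sharp constant $\tfrac{1}{\sqrt{3}}$ for the second. The paper itself gives no proof, citing Kenig--Ponce--Vega instead, and your argument is precisely the standard one from that reference, so there is nothing to reconcile beyond the careful $2\pi$ bookkeeping you already performed.
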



\subsection{Proof of Theorem \ref{mainTHM}}

We only prove Theorem \ref{mainTHM} for the Cauchy problem \eqref{GK} as the same proof developed below is valid for the Cauchy problem \eqref{gKdV}. Besides, by simple modifications to our argument, we will assume that $u$ is a real-valued function. 

We will use the same functional space $\mathcal{X}_T$ introduced in \cite{LinaresMiyaGus} (except we allow the power $m$ of the weight $\langle x\rangle^m$ to be fractional). Setting $m>\max\{\frac{1}{2\alpha},\frac{1}{2}\}$, $s\in\mathbb{Z^+}$ with $s\geq 2m+4$, we consider the space $\mathcal{X}_T$ defined as
\begin{equation*}
\begin{aligned}
\mathcal{X}_T = \big\{u &\in C([0,T];H^s(\mathbb{R})): \\
\|u\|_{\mathcal{X}_T}:= & \|u\|_{L_T^\infty H_x^s}  +\|\langle x \rangle^m u \|_{L^\infty_T L^\infty_x}+\sum_{l=1}^4{\|\langle x \rangle^m \partial^l_x u\|_{L^\infty_T L^2_x}} + \|\partial^{s+1}_x u \|_{L^\infty_x L^2_T} \leq 2C_1\delta,\\
& \qquad \qquad \sup_{0\leq t \leq T}{\| \langle x\rangle^m (u(t)-u(0))\|_{L^{\infty}}}\leq{\frac{\lambda}{2}}\big\}.
\end{aligned}
\end{equation*}
We equip this space with a distance function
\begin{equation*}
   d_{\mathcal{X}_T}(u,v)=\|u-v\|_{\mathcal{X}_T}.
\end{equation*}
Then, by the definition of the space $\mathcal{X}_T$ and \eqref{condi1}, we have 
\begin{equation}\label{NonNullCond}
\frac{\lambda}{2}\leq \langle x \rangle^m |u(x,t)| \leq \langle x \rangle^m |u_0(x)| +\frac{\lambda}{2},
\end{equation}
for any $(x,t) \in \mathbb{R} \times [0,T]$ as long as $u\in X_T$. We use the integral representation of the GKdV to define the map
\begin{equation*}
\Phi (u(t)) = U(t)u_0 \mp \int_0^t U(t-s)(|u|^\alpha \partial_x u )(s)ds,
\end{equation*}
where $U(t)$, $t\in \mathbb{R}$, is given by \eqref{Uoperdef}, $u_0$ as in \eqref{eqmainTH1}-\eqref{condi1}, and $u\in \mathcal{X}_T$. In what follows, we will find $C_1>0$, $T>0$ such that $\Phi$ defines a contraction on the complete metric space $\mathcal{X}_T$. \smallskip

We begin by proving that for small $T>0$, $\Phi(u) \in \mathcal{X}_T$, for all $u\in \mathcal{X}_T$.  \smallskip

$\bullet$ {\bf Estimate for $\|\Phi(u)\|_{L_T^\infty H^s_x}$ and $\|\partial_x^{s+1} \Phi(u)\|_{L^\infty_x L^2_T}$}. Using Lemma \ref{KatoS}, we get
\begin{equation} \label{eqWLEQ4}
\begin{aligned}
\|\partial_x^s \Phi(u)\|_{L_T^\infty L_x^2}+\|\partial_x^{s+1}\Phi(u)\|_{L^\infty_x L^2_T}&\leq C_0\|u_0\|_{H^s}+C\|\partial_x^s(|u|^{\alpha}\partial_x u)\|_{L^1_TL^2_x}\\
&\leq C_0\|u_0\|_{H^s}+C \sum_{j=0}^{s}{\|\partial_x^j(|u|^\alpha)\partial^{s+1-j}_x u \|_{L_T^1 L_x^2}}.
\end{aligned}
\end{equation}
To control the sum on the right-hand side of \eqref{eqWLEQ4}, by an interpolation argument, it is enough to estimate the cases $j=0$ and $j=s$. When $j=0$, we use H\"older's inequality and the definition of the space $\mathcal{X}_T$ to deduce
\begin{equation}\label{eqWLEQ4.1}
\begin{aligned}
\||u|^{\alpha}\partial_x^{s+1}u\|_{L^1_TL^2_x} &\leq T^{\frac{1}{2}}\| |u|^{\alpha} \partial_x^{s+1} u \|_{L^2_T L^2_x}\\
&\leq CT^{\frac{1}{2}} \|\partial_x^{s+1}u\|_{L^\infty_x L^2_T} \|\langle x \rangle^{m\alpha} |u|^\alpha\|_{L_T^\infty L_x^\infty} \|\langle x \rangle^{-m\alpha} \|_{L^2}\\
&\leq CT^{\frac{1}{2}} \|\partial_x^{s+1} u\|_{L_x^\infty L_T^2} \|\langle x \rangle^m u\|^{\alpha}_{L_T^{\infty} L_x^{\infty}}\\
&\leq CT^{\frac{1}{2}} \delta^{\alpha +1},
\end{aligned}
\end{equation}
where we have used that $m>\frac{1}{2\alpha}$. Next, we deal with the case $j=s$. We have
\begin{equation}\label{eqWLEQ5}
\begin{aligned}
\|\partial^s_x(|u|^\alpha)\partial_x u\|_{L^1_T L^2_x}\leq & C T\left( \||u|^{\alpha-s}|\partial_x u|^s\partial_x u\|_{L^\infty_T L^2_x }+\cdots+\||u|^{\alpha-1}|\partial_x^s u|\partial_x u\|_{L^\infty_T L^2_x } \right)\\
=:& CT\big(\mathcal{I}_{s,s}+\dots+\mathcal{I}_{s,1} \big).
\end{aligned}
\end{equation}
Again, an interpolation argument reduces to the proof of the control of $\mathcal{I}_{s,1}$ and $\mathcal{I}_{s,s}$ only. First, using the definition of the space $\mathcal{X}_T$ and property \eqref{NonNullCond}, it is not difficult to see 
\begin{equation}\label{betaesti}
    |u|^{\beta}\leq C(\lambda^{-|\beta|}+\delta^{|\beta|}) \langle x \rangle^{-m\beta},
\end{equation}
for all $\beta \in \mathbb{R}$ and $C>0$ independent of $u\in \mathcal{X}_T$ and depending on $C_1$. The previous inequality and the Sobolev embedding $H^1(\mathbb{R})\hookrightarrow L^{\infty}(\mathbb{R})$ yield 
\begin{equation}\label{eqWLEQ6}
\begin{aligned}
\mathcal{I}_{s,1} \leq & C\||u|^{\alpha-1}|\partial_xu|\|_{L^\infty_T L^\infty_x} \|\partial_x^su\|_{L_T^\infty L^2_x}\\
\leq & C(\lambda^{-|\alpha-1|}+\delta^{|\alpha-1|})\|\langle x \rangle^{-m(\alpha-1)}\partial_xu\|_{L^\infty_T L^\infty_x} \|\partial_x^su\|_{L_T^\infty L^2_x} \\
\leq & C(\lambda^{-|\alpha-1|}+\delta^{|\alpha-1|})\big(\|\langle x \rangle^{m}\partial_xu\|_{L^\infty_T L^2_x}+\|\langle x \rangle^{m}\partial_x^2u\|_{L^\infty_T L^2_x}\big) \|\partial_x^su\|_{L_T^\infty L^2_x} \\
\leq & C(\lambda^{-|\alpha-1|}+\delta^{|\alpha-1|})\delta^2,
\end{aligned}
\end{equation}
where we have also used that $\langle x \rangle^{-m(\alpha-1)}\leq \langle x \rangle^{m}$. Using again \eqref{betaesti} and the Sobolev embedding gives
\begin{equation}\label{eqWLEQ7}
\begin{aligned}
\mathcal{I}_{s,s}\leq &C(\lambda^{-|\alpha-s|}+\delta^{|\alpha-s|})\|\langle x \rangle^{-m(\alpha-s)}|\partial_x u|^s \partial_x u\|_{L^{\infty}_T L^2_x}\\
\leq &C(\lambda^{-|\alpha-s|}+\delta^{|\alpha-s|})\|\langle x \rangle^{m}\partial_x u\|_{L^{\infty}_T L^{\infty}_x}^s \|\langle x \rangle^{-m\alpha}\partial_x u\|_{L^{\infty}_T L^2_x}\\
\leq &C(\lambda^{-|\alpha-s|}+\delta^{|\alpha-s|})(\|\langle x \rangle^{m}\partial_x u\|_{L^{\infty}_T L^{2}_x}+\|\langle x \rangle^{m}\partial_x^2 u\|_{L^{\infty}_T L^{2}_x})^s \|\langle x \rangle^m\partial_x u\|_{L^{\infty}_T L^2_x}\\
\leq &C(\lambda^{-|\alpha-s|}+\delta^{|\alpha-s|})\delta^{s+1}.
\end{aligned}
\end{equation}
Plugging \eqref{eqWLEQ4.1}, \eqref{eqWLEQ6} and \eqref{eqWLEQ7} into \eqref{eqWLEQ5}, and going back to \eqref{eqWLEQ4}, we deduce
\begin{equation}\label{eqWLEQ7.1}
    \begin{aligned}
    \|\partial_x^s \Phi(u)\|_{L^{\infty}_T L^2_x}+\|\partial_x^{s+1} \Phi(u)\|_{L^{\infty}_x L^2_T} &\leq C_0 \|u_0\|_{H^s}+CT^{\frac{1}{2}}\langle T \rangle^{\frac{1}{2}}\langle \lambda^{-1}+\delta \rangle^{\alpha+2s+1}.
    \end{aligned}
\end{equation}
Similarly, we deduce
\begin{equation}\label{eqWLEQ7.1.1}
    \begin{aligned}
    \| \Phi(u)\|_{L^{\infty}_T L^2_x}\leq \|u_0\|_{H^s}+CT^{\frac{1}{2}}\langle T \rangle^{\frac{1}{2}}\langle \lambda^{-1}+\delta \rangle^{\alpha+2s+1}.
    \end{aligned}
\end{equation}
Thus, \eqref{eqWLEQ7.1} and \eqref{eqWLEQ7.1.1} complete the estimate of the $H^{s}$-norm  of $\Phi(u)$ and $\|\partial_x^{s+1}\Phi(u)\|_{L^{\infty}_x L^2_T}$.

$\bullet$ {\bf Estimate for $\|\langle x \rangle^m \partial_x^l \Phi(u)\|_{L^\infty_T L^2_x}, 1 \leq l \leq 4$}. An application of Lemma \ref{derivexp2} yields
\begin{equation*}
\begin{aligned}
 \|\langle x \rangle^m \partial_x^l \Phi(u)\|_{L^\infty_T L^2_x} \leq & \|\langle x \rangle^m U(t) \partial^l_x u_0 \|_{L^\infty_T L^2_x} + \|\int_0^t \langle x \rangle^m U(t-s) \partial^l_x (|u|^\alpha \partial_x u )(s)ds\|_{L^\infty_T L^2_x}\\
\leq & C\langle T\rangle^{\lfloor m \rfloor +1}\|\langle x \rangle^m \partial^l_x u_0\|_{L^2_x}+C\langle T\rangle^{\lfloor m \rfloor +1} \|D^{2m}\partial^{l}_x u_0\|_{L^2_x}\\
&+C\langle T\rangle^{\lfloor m \rfloor +1} \|\langle x \rangle^{m} \partial_x^l (|u|^\alpha \partial_x u) \|_{L^1_T L^2_x} \\
&+ C\langle T\rangle^{\lfloor m \rfloor +1} \|D^{2m}\partial^{l}_x (|u|^{\alpha} \partial_x u)\|_{L^1_T L^2_x}.
\end{aligned}
\end{equation*}
Since $s\geq 2m+4$, we have $\|D^{2m}\partial^l_x u_0\|_{L^2}\leq \|u_0\|_{H^s}$. Thus, we proceed with the  estimate for the term $\|D^{2m}\partial_x^{l} (|u|^{\alpha} \partial_x u)\|_{L^1_T L^2_x}$. Gathering \eqref{eqWLEQ4.1}, \eqref{eqWLEQ6} and \eqref{eqWLEQ7}, we get
\begin{equation*} 
\begin{aligned}
\|D^{2m}\partial^{l}_x (|u|^{\alpha} \partial_x u)\|_{L^1_T L^2_x} &\leq C \||u|^\alpha \partial_x u\|_{L^1_T L^2_x} + C\|\partial^s_x (|u|^\alpha \partial_x u)\|_{L^1_T L^2_x} \\
&\leq C T\|u\|^{\alpha + 1}_{L^\infty_T H^1_x} +CT^{\frac{1}{2}}\langle T \rangle^{\frac{1}{2}}\langle \lambda^{-1}+\delta\rangle^{\alpha+2s+1}\\
&\leq CT^{\frac{1}{2}}\langle T \rangle^{\frac{1}{2}}\langle \lambda^{-1}+\delta\rangle^{\alpha+2s+1}.
\end{aligned}
\end{equation*}
Next, we deal with $\|\langle x \rangle^m \partial_x^l(|u|^\alpha \partial_x u)\|_{L^1_T L^2_x}$. Using interpolation again, we obtain
\begin{equation*} 
\begin{aligned}
\|\langle x \rangle^m \partial_x^l(|u|^\alpha \partial_x u)\|_{L^1_T L^2_x}\leq & CT\|\langle x \rangle^m |u|^\alpha \partial^{l+1}_x u\|_{L^\infty_T L^2_x} + CT\|\langle x \rangle^m \partial^l_x(|u|^\alpha)\partial_x u\|_{L^\infty_T L^2_x}\\
\leq & CT\|\langle x \rangle^m |u|^\alpha \partial_x^{l+1} u\|_{L^\infty_T L^2_x} + CT\| \langle x \rangle^m |u|^{\alpha -l} |\partial_x u|^{l+1}\|_{L^\infty_T L^2_x}\\
&+CT \|\langle x \rangle^m |u|^{\alpha - 1} |\partial_x^l u|\partial_x u\|_{L^\infty_T L^2_x}\\
=:& \, CT\big(\mathcal{II}_1+\mathcal{II}_2+\mathcal{II}_3\big).
\end{aligned}
\end{equation*}
The estimate for $\mathcal{II}_1$ with $1\leq l \leq 3$ is obtained as follows
\begin{equation*}
\begin{aligned}
\mathcal{II}_1 &\leq C \|\langle x \rangle^m u \|_{L^\infty_T L^\infty_x}^{\alpha} \|\langle x\rangle^{m-m\alpha} \partial_x^{l+1} u\|_{L^\infty_T L^2_x}\\
&\leq C \|\langle x \rangle^m u \|_{L^\infty_T L^\infty_x}^{\alpha} \|\langle x\rangle^{m} \partial_x^{l+1} u\|_{L^\infty_T L^2_x} \\
&\leq C\delta^{\alpha+1}.
\end{aligned}
\end{equation*}
If $l=4$ and $\alpha \geq 1$, since $s>5$, we get
\begin{equation*}
\begin{aligned}
\mathcal{II}_1 &= \|\langle x \rangle^m (\langle x \rangle^m |u|)^\alpha \langle x \rangle^{-m\alpha} \partial^{5} u \|_{L^\infty_T L^2_x}\\
&\leq C\|\langle x \rangle^m u\|^\alpha_{L^\infty_T L^\infty_x}\|\langle x \rangle^{m(1-\alpha)}\partial^5_x u\|_{L^\infty_T L^2_x}\\
&\leq  C\|\langle x \rangle^m u\|^\alpha_{L^\infty_T L^\infty_x} \|\partial^5_x u\|_{L^\infty_T L^2_x}\\
&\leq C\delta^{\alpha + 1}.
\end{aligned}
\end{equation*}
If $l=4$ and $0<\alpha<1$, we use Lemma \ref{Interplemma}, the fact that $s\geq \frac{1}{\alpha}+4$ and the Sobolev embedding to get
\begin{equation*}
\begin{aligned}
\mathcal{II}_1  &\leq C\|\langle x \rangle^m u\|^\alpha_{L^\infty_T L^\infty_x}\|\langle x \rangle^{m(1-\alpha)}\partial^5_x u\|_{L^\infty_T L^2_x}\\
&\leq C\|\langle x \rangle^{m}u\|_{L^{\infty}_T L^{\infty}_x}^{\alpha}\big(\|\langle x \rangle^m \partial_x^{4}u\|_{L^{\infty}_T L^2_x}+\|J^{\frac{1}{\alpha}+4}u\|_{L^{\infty}_T L^2_x}\big)\\
&\leq C\delta^{\alpha+1}.
\end{aligned}
\end{equation*}
This completes the estimate for $\mathcal{II}_1$. Next, by \eqref{betaesti} and the Sobolev embedding, we get
\begin{equation*} 
\begin{aligned}
\mathcal{II}_2 \leq & C(\lambda^{-|\alpha-l|}+\delta^{|\alpha-l|})\|\langle x \rangle^{m-m(\alpha-l)}|\partial_x u|^{l+1}\|_{L^{\infty}_T L^{2}_x}\\
\leq & C(\lambda^{-|\alpha-l|}+\delta^{|\alpha-l|})\|\langle x\rangle^m \partial_x u\|_{L^{\infty}_T L^{\infty}_x}^{l+1}\|\langle x\rangle^{-m\alpha}\|_{L^{2}_x}\\
\leq & C \langle \lambda^{-1}+\delta \rangle^{\alpha+2s+1}.
\end{aligned}
\end{equation*}
Finally, by previous arguments one can deduce
\begin{equation*}
\begin{aligned}
\mathcal{II}_3 \leq & C(\lambda^{-|\alpha-1|}+\delta^{|\alpha-1|})\|\langle x \rangle^{m}\partial_x u\|_{L^{\infty}_T L^{\infty}_x}\|\langle x \rangle^{m(1-\alpha)}\partial_x^l u\|_{L^{\infty}_T L^{2}_x}\\
\leq &C \langle \lambda^{-1}+\delta \rangle^{\alpha+2s+1}.
\end{aligned}
\end{equation*}
Consequently, we gather the above estimates for $\mathcal{II}_j$, $j=1,2,3$, to get
\begin{equation*}
\begin{aligned}
\|\langle x \rangle^m \partial_x^l(|u|^\alpha \partial_x u)\|_{L^{\infty}_T L^2_x}\leq  CT^{\frac{1}{2}}\langle T\rangle^{\frac{1}{2}} \langle \lambda^{-1}+\delta \rangle^{\alpha+2s+1},
\end{aligned}
\end{equation*}
for any $l=1,\dots,4$. Thus, we conclude
\begin{equation}\label{eqWLEQ7.2} 
    \begin{aligned}
    \sup_{1\leq l \leq 4}\|\langle x \rangle^m &\partial_x^l \Phi(u)\|_{L^\infty_T L^2_x} \leq C_0\langle T \rangle^{\lfloor m \rfloor+1}\delta +C T^{\frac{1}{2}}\langle T\rangle^{\lfloor m \rfloor+\frac{3}{2}} \langle \lambda^{-1}+\delta \rangle^{\alpha+2s+1}.
    \end{aligned}
\end{equation}

$\bullet$ {\bf Estimate for $\|\langle x \rangle^m \Phi (u)\|_{L^\infty_T L^\infty_x}$}.  The fact that
\begin{equation*}
  \frac{d}{dt}U(t)u_0 = -\partial_x^3 U(t)u_0  
\end{equation*}
allows us to write 
\begin{equation*}
    \begin{aligned}
    \langle x \rangle^m U(t) u_0=\langle x \rangle^m u_0-\int_0^t \, \langle x \rangle^m U(s)\partial_x^3 u_0(s)\, ds.
    \end{aligned}
\end{equation*}
Then, we apply the Sobolev embedding, the fact that $s\geq 2m+4$ and Lemma \ref{derivexp2} to get
\begin{equation}\label{eqWLEQ8}
\begin{aligned}
\|\langle x \rangle^m  (U(t)u_0 - u_0)\|_{L^{\infty}_TL^\infty_x} \leq & \int_0^T \| \langle x \rangle^m U(s) \partial^3_x u_0(s) \|_{L^\infty_x}\, ds\\
\leq &C T\big(\|\langle x \rangle^m U(t) \partial_x^3 u_0\|_{L^{\infty}_T L^2_x} +\|\langle x \rangle^m U(t) \partial^4_x u_0\|_{L^{\infty}_T L^2_x}\big)\\
\leq &C T\langle T \rangle^{\lfloor m \rfloor +1} \big(\|\langle x \rangle^m \partial^3_x u_0\|_{L^2_x} +\|\langle x \rangle^m \partial^4_x u_0\|_{L^2_x} \\
&\hspace{2cm}
+\|J^{2m+4} u_0\|_{L^2_x}\big)\\
\leq &CT^{\frac{1}{2}}\langle T\rangle^{\lfloor m\rfloor+\frac{3}{2}}\delta.
\end{aligned} 
\end{equation}
By a similar argument used above, we also deduce
\begin{equation*}
\begin{aligned}
\|\langle x \rangle^m & \int_0^t{U(t-s)(|u|^\alpha\partial_xu)(s)ds}\|_{L^{\infty}_T L^\infty_x}\\
\leq & C\langle T \rangle^{\lfloor m \rfloor+1} \big( \|\langle x \rangle^m ( |u|^\alpha \partial_x u)\|_{L_T^{1} L_x^2}+\|\langle x \rangle^m \partial_x( |u|^\alpha \partial_x u)\|_{L_T^{1} L_x^2} \\
&+ \|J^{2m+1}(|u|^\alpha \partial_x u)\|_{L_T^{1} L^2_x}\big).
\end{aligned}
\end{equation*}
Following the argument in \eqref{eqWLEQ6}, we obtain
\begin{equation*}
   \|\langle x \rangle^m |u|^\alpha \partial_x u\|_{L_T^1 L^2_x}+\|\langle x \rangle^m \partial_x (|u|^\alpha \partial_x u)\|_{L_T^{1} L_x^2} \leq CT^{\frac{1}{2}}\langle T \rangle^{\frac{1}{2}}\langle \lambda^{-1}+\delta \rangle^{\alpha+2s+1}.
\end{equation*}
Now, setting $0\leq \beta \leq s$ an integer, as done previously, we can use interpolation and the estimates \eqref{eqWLEQ4.1} and \eqref{eqWLEQ5} to get
\begin{equation*}
    \|\partial^\beta_x (|u|^\alpha \partial_x u)\|_{L^1_T L^2_x} \leq CT^{\frac{1}{2}}\langle T \rangle^{\frac{1}{2}}\langle \lambda^{-1}+\delta \rangle^{\alpha+2s+1}.
\end{equation*}
Summarizing, we have deduced
\begin{equation}\label{eqWLEQ9}
\begin{aligned}
\|\langle x \rangle^m & \int_0^t{U(t-s)(|u|^\alpha\partial_xu)(s)ds}\|_{L^{\infty}_T L^\infty_x}\leq CT^{\frac{1}{2}}\langle T \rangle^{\lfloor m\rfloor +\frac{3}{2}}\langle \lambda^{-1}+\delta \rangle^{\alpha+2s+1}.
\end{aligned}
\end{equation}
Finally, \eqref{eqWLEQ8} and \eqref{eqWLEQ9} produce the following estimate
\begin{equation}\label{eqWLEQ9.1} 
\begin{aligned}
\|\langle x \rangle^m \Phi(u)\|_{L^\infty_T L^\infty_x} \leq &\|\langle x \rangle^m u_0\|_{L^\infty_x}+\|\langle x \rangle^m \left(U(t)u_0- u_0\right)\|_{L^\infty_T L^\infty_x}\\
&+\|\langle x \rangle^m \int_0^tU(t-s)(|u|^\alpha \partial_x u)(s)ds\|_{L_T^\infty L_x^\infty}\\
\leq & \delta+CT^{\frac{1}{2}}\langle T \rangle^{\lfloor m \rfloor+\frac{3}{2}}\langle \lambda^{-1}+\delta\rangle^{\alpha+2s+1}.
\end{aligned}
\end{equation}
Therefore, \eqref{eqWLEQ7.1}, \eqref{eqWLEQ7.1.1}, \eqref{eqWLEQ7.2} and \eqref{eqWLEQ9.1} show that there exist some constants $C_1\geq 6\max\{C_0,1\}$ and $C>0$ such that
\begin{equation}\label{eqWLEQ9.2}
\begin{aligned}
\|\Phi(u)\|_{X_T} \leq & C_1\langle T \rangle^{\lfloor m\rfloor+1}\delta+CT^{\frac{1}{2}}\langle T \rangle^{\lfloor m \rfloor+\frac{3}{2}}\langle \lambda^{-1}+\delta\rangle^{\alpha+2s+1},
\end{aligned}
\end{equation}
and using \eqref{eqWLEQ8} and \eqref{eqWLEQ9}, we get
\begin{equation}\label{eqWLEQ9.3}
\begin{aligned}
 \|&\langle x \rangle^m (\Phi(u(t))-u_0)\|_{L^{\infty}_T L^\infty_x}\\
 &\leq  \|\langle x \rangle^m (U(t)u_0 - u_0)\|_{L^{\infty}_TL^\infty_x}+ \|\langle x \rangle^m \int_0^t{U(t-s)(|u|^\alpha \partial_x u)(s)ds}\|_{L^{\infty}_TL^\infty_x}\\
& \leq CT^{\frac{1}{2}}\langle T \rangle^{\lfloor m\rfloor +\frac{3}{2}}\langle \lambda^{-1}+\delta \rangle^{\alpha+2s+1}.
\end{aligned}
\end{equation}
Then, let $0<T=T(\alpha, \delta, s)\leq 1$ small enough such that
\begin{equation}\label{eqWLEQ9.4}
    \begin{aligned}
    &C_1\langle T \rangle^{\lfloor m\rfloor+1}\delta+CT^{\frac{1}{2}}\langle T \rangle^{\lfloor m \rfloor+\frac{3}{2}}\langle \lambda^{-1}+\delta\rangle^{\alpha+2s+1} \leq 2C_1 \delta, \\
    &CT^{\frac{1}{2}}\langle T \rangle^{\lfloor m\rfloor +\frac{3}{2}}\langle \lambda^{-1}+\delta \rangle^{\alpha+2s+1} \leq \frac{\lambda}{2}.
    \end{aligned}
\end{equation}
Consequently, \eqref{eqWLEQ9.2}, \eqref{eqWLEQ9.3} and \eqref{eqWLEQ9.4} imply that $\Phi(u)\in \mathcal{X}_T$.
\medskip

Next, we estimate the difference $\Phi(u)-\Phi(v)$ with $u,v \in \mathcal{X}_T$. We divide our arguments according to the terms in the definition of the norm in the space $\mathcal{X}_T$. \smallskip

$\bullet$ {\bf Estimate for $\|\Phi(u)-\Phi(v)\|_{L_T^\infty H^s_x}$ and $\|\partial_x^{s+1}(\Phi(u)-\Phi(v))\|_{L^\infty_x L^2_T}$.} 
By using Lemma \ref{KatoS} and Leibniz rule, we have 
\begin{equation}\label{eqWLEQ9.5}
\begin{aligned}
\|\partial^s_x & (\Phi(u)-\Phi(v))\|_{L^\infty_T L^2_x} +\|\partial^{s+1}_x (\Phi(u)-\Phi(v))\|_{L^\infty_x L^2_T}\\
&\leq C \|\partial^s_x (|u|^\alpha\partial_x u) - \partial_x^s (|v|^\alpha \partial_x v)\|_{L^1_T L^2_x}\\
&\leq C \sum_{j=0}^{s} \|\partial_x^j (|u|^\alpha) \partial^{s+1-j}_x u - \partial_x^j (|v|^\alpha) \partial_x^{s+1-j} v\|_{L^1_T L^2_x}=: \sum_{j=0}^s\mathcal{III}_j.
\end{aligned}
\end{equation}
By using an interpolation argument, it is suffices to treat the cases $\mathcal{III}_0$ and $\mathcal{III}_s$. The condition \eqref{NonNullCond} and the mean value theorem yield 
\begin{equation}\label{eqWLEQ10}
\begin{aligned}
|&|u|^\beta-|v|^\beta|\\ 
&\leq C\left\{ \begin{aligned}
&\lambda^{\beta-1}\langle x \rangle^{-m(\beta-1)}|u-v|, \hspace{5.1cm} \text{ if }  \beta<1, \\
&\big(\|\langle x \rangle^m u\|_{L^{\infty}_TL^{\infty}_x}^{\beta-1}+\|\langle x \rangle^m v\|_{L^{\infty}_TL^{\infty}_x}^{\beta-1}\big)\langle x \rangle^{-m(\beta-1)}|u-v|, \qquad \text{ if } \beta\geq 1,
\end{aligned}\right.\\
& \leq C(\lambda^{-|\beta-1|}+\delta^{|\beta-1|})\langle x \rangle^{-m(\beta-1)}|u-v|,
\end{aligned}
\end{equation}
for all $\beta \in \mathbb{R}$. We first estimate $\mathcal{III}_0$. Indeed, by \eqref{eqWLEQ10}, we have
\begin{equation} \label{eqWLEQ11}
\begin{aligned}
\mathcal{III}_0 \leq & C\|(|u|^\alpha-|v|^{\alpha}) \partial_x^{s+1} u\|_{L^1_T L^2_x} + \||v|^{\alpha}(\partial_x^{s+1}u-\partial_x^{s+1}v))\|_{L^1_T L^2_x}\\
\leq &  C T^{\frac{1}{2}}(\lambda^{-|\alpha-1|}+\delta^{|\alpha-1|})\|\langle x \rangle^{-m(\alpha - 1)}|u-v|\partial_x^{s+1} u\|_{L^2_T L^2_x} \\
&+  C T^{\frac{1}{2}} \||v|^{\alpha}(\partial_x^{s+1}u-\partial_x^{s+1}v)\|_{L_T^2 L^2_x}\\
\leq & C T^{\frac{1}{2}}(\lambda^{-|\alpha-1|}+\delta^{|\alpha-1|})\|\langle x \rangle^{-\alpha m}\|_{L^{2}_x}\|\langle x\rangle^{m}(u-v)\|_{L^{\infty}_T L^{\infty}_x}\|\partial_x^{s+1}u\|_{L^{\infty}_x L^2_T} \\
&+C T^{\frac{1}{2}}\|\langle x \rangle^{-\alpha m}\|_{L^{2}_x}\|\langle x\rangle^{m}v\|_{L^{\infty}_T L^{\infty}_x}^{\alpha}\|\partial_x^{s+1}(u-v)\|_{L^{\infty}_x L^2_T}\\
\leq & C T^{\frac{1}{2}}\langle \lambda^{-1}+\delta \rangle^{\alpha+2s+1} d_{\mathcal{X}_T}(u,v).
\end{aligned}
\end{equation}
Next, we decompose $\mathcal{III}_s$ as follows
\begin{equation} \label{eqWLEQ11.1}
\begin{aligned}
\mathcal{III}_s \leq & CT \||u|^{\alpha -s}(\partial_x u)^{s+1} - |v|^{\alpha -s}(\partial_x v)^{s+1}\|_{L^\infty_T L^2_x}\\
&+ \cdots\\
&+CT\||u|^{\alpha -1} \partial_x^s u \partial_x u - |v|^{\alpha -1} \partial_x^s v \partial_x v \|_{L^\infty_T L^2_x},\\
=:& CT(\mathcal{IV}_s +\dots+\mathcal{IV}_1). 
\end{aligned}
\end{equation}
Again, the terms $\mathcal{IV}_j$, $2\leq j \leq s-1$, can be estimated by the interpolation between the estimates for $\mathcal{IV}_s$ and $\mathcal{IV}_1$. Using the Sobolev embedding $ H^1(\mathbb{R}) \hookrightarrow L^\infty(\mathbb{R})$, \eqref{betaesti} and \eqref{eqWLEQ10}, we deduce
\begin{equation*} 
\begin{aligned}
\mathcal{IV}_s \leq &C\||u|^{\alpha-s} ((\partial_x u)^{s+1} - (\partial_x v)^{s+1})\|_{L_T^\infty L^2_x}\\
&+ \|(|u|^{\alpha -s}-|v|^{\alpha -s})(\partial_x v)^{s+1}\|_{L^\infty_T L^2_x} \\
\leq &C(\lambda^{-|\alpha-s|}+\delta^{|\alpha-s|})\|\langle  x \rangle^{-m(\alpha-s)} (|\partial_x u|^s + |\partial_x v|^s)(\partial_x u - \partial_x v)\|_{L^\infty_T L^2_x}\\
&+C(\lambda^{-|\alpha-s-1|}+\delta^{|\alpha-s-1|}) \|\langle x \rangle^{-m(\alpha -s -1)} |u-v| (\partial_x v)^{s+1}\|_{L^\infty_T L^2_x}\\
\leq &C(\lambda^{-|\alpha-s|}+\delta^{|\alpha-s|})\|\langle  x \rangle^{m}\partial_x u\|_{L^{\infty}_T L^{\infty}_x}^{s}\|\langle  x \rangle^{m}(\partial_x u - \partial_x v)\|_{L^\infty_T L^2_x}\\
&+C(\lambda^{-|\alpha-s-1|}+\delta^{|\alpha-s-1|})\\
&\qquad\times\|\langle x\rangle^{-m(\alpha+1)}\|_{L^2}\|\langle x \rangle^m \partial_x v\|^{s+1}_{L^{\infty}_{T}L^{\infty}_x}\|\langle x \rangle^{m}(u-v)\|_{L^{\infty}_T L^{\infty}_x}\\
\leq &C\langle \lambda^{-1}+\delta \rangle^{\alpha+2s+1} d_{\mathcal{X}_T}(u,v).
\end{aligned}
\end{equation*}
Next we deal with $\mathcal{IV}_1$. By using \eqref{betaesti} and \eqref{eqWLEQ10}, we obtain 
\begin{equation*}
\begin{aligned}
\mathcal{IV}_1 \leq & \||u|^{\alpha - 1} \partial_x^s u \partial_x (u - v)\|_{L^\infty_T L^2_x} + \||u|^{\alpha - 1} \partial^s_x(u-v)\partial_x v\|_{L^\infty_T L^2_x}\\
&+\|(|u|^{\alpha - 1} - |v|^{\alpha - 1})\partial_x^s v \partial_x v \|_{L_T^\infty L^2_x}\\
\leq & C (\lambda^{-|\alpha-1|}+\delta^{|\alpha-1|})(\|\partial_x^s u\|_{L^{\infty}_T L^{2}_x}\|\langle x \rangle^m \partial_x(u-v)\|_{L^{\infty}_T L^{\infty}_x}\\
&+\|\langle x \rangle^m\partial_x v\|_{L^{\infty}_TL^{\infty}_x}\|\partial_x^s(u-v) \|_{L^{\infty}_TL^{2}_x})\\
&+C(\lambda^{-|\alpha-2|}+\delta^{|\alpha-2|})\|\partial_x^s v\|_{L^{\infty}_T L^{2}_x}\|\langle x\rangle^{m} \partial_x v\|_{L^{\infty}_T L^{\infty}_x}\|\langle x\rangle^m(u- v)\|_{L^{\infty}_T L^{\infty}_x}\\
\leq & C\langle \lambda^{-1}+\delta \rangle^{\alpha+2s+1}d_{\mathcal{X}_T}(u,v).
\end{aligned}
\end{equation*}
Therefore, plugging the previous estimates in \eqref{eqWLEQ11.1}, we have
\begin{equation}\label{eqWLEQ12}
    \begin{aligned}
    \mathcal{III}_s \leq & C T \langle \lambda^{-1}+\delta \rangle^{\alpha+2s+1}d_{\mathcal{X}_T}(u,v).
    \end{aligned}
\end{equation}
We combine \eqref{eqWLEQ11} and \eqref{eqWLEQ12} to get 
\begin{equation}\label{eqWLEQ12.1}
 \begin{aligned}
\|\partial_x^s  (\Phi(u)-\Phi(v))\|_{L^\infty_T L^2_x}& +\|\partial_x^{s+1} (\Phi(u)-\Phi(v))\|_{L^\infty_x L^2_T}\\
&\leq  C T^{\frac{1}{2}}\langle T\rangle^{\frac{1}{2}} \langle \lambda^{-1}+\delta \rangle^{\alpha+2s+1}d_{\mathcal{X}_T}(u,v).
\end{aligned}   
\end{equation}
A similar reasoning yields
\begin{equation}\label{eqWLEQ12.1.1}
\begin{aligned}
\|\Phi(u) - \Phi(v)\|_{L^\infty_T L^2_x}\leq CT^{\frac{1}{2}}\langle T \rangle^{\frac{1}{2}}\langle \lambda^{-1}+\delta \rangle^{\alpha+2s+1} d_{\mathcal{X}_T}(u,v). 
\end{aligned}
\end{equation}

$\bullet$ {\bf Estimate for $\|\langle x \rangle^{m}(\Phi(u)-\Phi(v))\|_{L^\infty_T L^{\infty}_x}$}. We use the Sobolev embedding and Lemma \ref{derivexp2} to deduce
\begin{equation}\label{eqWLEQ12.2}
\begin{aligned}
\|\langle x \rangle^m  (\Phi(u) - \Phi(v))\|_{L^\infty_T L^\infty_x} \leq & C T\langle T \rangle^{\lfloor m \rfloor +1} \|\langle x \rangle^m (|u|^\alpha \partial_x u - |v|^\alpha \partial_x v)\|_{L_T^{\infty} L^2_x}\\
&+ C T\langle T \rangle^{\lfloor m \rfloor +1}\|\langle x \rangle^m \partial_x(|u|^\alpha \partial_x u - |v|^\alpha \partial_x v)\|_{L_T^{\infty} L^2_x}\\
&+ C\langle T\rangle^{\lfloor m \rfloor+1} \|\partial_x^s(|u|^\alpha \partial_x u - |v|^\alpha \partial_x v)\|_{L_T^{1} L^2_x}.
\end{aligned}
\end{equation}
The estimate for the last term on the right-hand side of \eqref{eqWLEQ12.2} follows exactly as in the argument to control \eqref{eqWLEQ9.5}. On the other hand, the estimates for the first two terms can be obtained by repeating 
the arguments using \eqref{betaesti} and \eqref{eqWLEQ10}. We summarize these estimates as follows 
\begin{equation}\label{eqWLEQ13.1}  
    \begin{aligned}
\|\langle x \rangle^m (\Phi(u) - \Phi(v))\|_{L^\infty_T L^\infty_x}\leq & CT^{\frac{1}{2}}\langle T\rangle^{\lfloor m\rfloor+\frac{3}{2}}\langle \lambda^{-1}+ \delta \rangle^{\alpha+2s+1}d_{\mathcal{X}_T}(u,v).
    \end{aligned}
\end{equation}

$\bullet$ {\bf Estimate for $\|\langle x \rangle^m \partial_x^l (\Phi(u) -\Phi(v))\|_{L^\infty_T L^2_x}$, $0\leq l\leq 4$.} An application of Lemma \ref{derivexp2} yields
\begin{equation}\label{eqWLEQ14}
\begin{aligned}
\|\langle x \rangle^m & \partial_x^l (\Phi(u) -\Phi(v))\|_{L^\infty_T L^2_x}\\
\leq & C \langle T\rangle^{\lfloor m \rfloor+1} \|\langle x \rangle^m \partial_x^l (|u|^\alpha \partial_x u-|v|^\alpha \partial_x v )\|_{L^1_T L^2_x}\\
&+C\langle T\rangle^{\lfloor m \rfloor+1} \|\partial_x^s(|u|^\alpha \partial_x u - |v|^\alpha \partial_x v)\|_{L^1_T L^2_x}.
\end{aligned}
\end{equation}
The estimate for the second term on the right-hand side of the above inequality follows from \eqref{eqWLEQ9.5}. By interpolation, we divide the estimate for the first term on the right-hand side of \eqref{eqWLEQ14} as follows 
\begin{equation*} 
\begin{aligned}
\|\langle x \rangle^m  \partial_x^l  (|u|^\alpha \partial_x u-|v|^\alpha \partial_x v )\|_{L^1_T L^2_x}\leq & C\|\langle x \rangle^m (|u|^{\alpha - l}(\partial_x u)^{l+1} - |v|^{\alpha - l}(\partial_x v)^{l+1})\|_{L^1_T L^2_x}\\
&+ C\|\langle x \rangle^m (|u|^{\alpha - 1} \partial_x^l u \partial_x u -  |v|^{\alpha - 1} \partial_x^l v \partial_x v)\|_{L^1_T L^2_x}\\
&+ C\|\langle x \rangle^m (|u|^\alpha \partial_x^{l+1} u - |v|^\alpha \partial_x^{l+1} v)\|_{L^1_T L^2_x}\\
=:& \mathcal{V}_{1} + \mathcal{V}_{2} + \mathcal{V}_{3}.
\end{aligned}
\end{equation*}
The estimates for $\mathcal{V}_{1}$ and $\mathcal{V}_{2}$ can be obtained via previously derived arguments. Thus, we have
\begin{equation*}
\begin{aligned}
\mathcal{V}_{1}+\mathcal{V}_{2} \leq CT\langle \lambda^{-1}+\delta \rangle^{\alpha+2s+1} d_{\mathcal{X}_T}(u,v). 
\end{aligned}
\end{equation*}
Finally, we estimate $\mathcal{V}_{3}$. The cases $1 \leq l < 4$ can be obtained similar to the above. We only consider the case $l=4$. We apply \eqref{eqWLEQ10} to get
\begin{equation*}
\begin{aligned}
\mathcal{V}_{3} \leq & CT(\lambda^{-|\alpha-1|}+\delta^{|\alpha-1|}) \|\langle x \rangle^{m(1-\alpha)}\partial_x^5 v\|_{L^{\infty}_T L^{2}_x}\|\langle x\rangle^{m}(u-v)\|_{L^{\infty}_T L^{\infty}_x}\\
&+CT\|\langle x \rangle^{m} u\|_{L^{\infty}_T L^{\infty}_x}^{\alpha}\|\langle x\rangle^{m(1-\alpha)}\partial_x^5(u-v)\|_{L^{\infty}_T L^2_x}\\
\leq & CT\langle \lambda^{-1}+\delta \rangle^{\alpha+2s+1} d_{\mathcal{X}_T}(u,v),
\end{aligned}
\end{equation*}
where the previous estimate is easily checked for $\alpha \geq 1$.When $0<\alpha<1$, by Lemma \ref{Interplemma} and the ideas in  \eqref{eqWLEQ7.1.1}, the same inequality holds. Therefore, we deduce
\begin{equation}\label{eqWLEQ15} 
\begin{aligned}
\sup_{0\leq l \leq 4}  \|\langle x \rangle^m  \partial_x^l (\Phi(u) -\Phi(v))\|_{L^\infty_T L^2_x} \leq & CT^{\frac{1}{2}}\langle T \rangle^{\lfloor m\rfloor+\frac{3}{2}}\langle \lambda^{-1}+\delta \rangle^{\alpha+2s+1} d_{\mathcal{X}_T}(u,v).
\end{aligned}
\end{equation}
Collecting \eqref{eqWLEQ12.1}, \eqref{eqWLEQ12.1.1}, \eqref{eqWLEQ13.1} and \eqref{eqWLEQ15}, we have
\begin{align*}
d_{\mathcal{X}_T}(\Phi(u),\Phi(v)) &\leq CT^{\frac{1}{2}}\langle T \rangle^{\lfloor m\rfloor+\frac{3}{2}}\langle \lambda^{-1}+\delta \rangle^{\alpha+2s+1} d_{\mathcal{X}_T}(u,v).
\end{align*}
By taking $T=T(\delta, \alpha)>0$ sufficiently small such that \eqref{eqWLEQ9.4} and
\begin{equation*}
    CT^{\frac{1}{2}}\langle T \rangle^{\lfloor m\rfloor+\frac{3}{2}}\langle \lambda^{-1}+\delta \rangle^{\alpha+2s+1} \leq \frac{1}{2}
\end{equation*}
are valid, yields that $\Phi$ is a contraction on $\mathcal{X}_T$. Hence, by the Banach fixed point theorem the integral equation associated to the GKdV equation has a unique solution in $\mathcal{X}_T$. This completes the existence part. The remaining properties stated in Theorem \ref{mainTHM} are deduced by standard arguments (see for example the arguments in \cite{LinaresI}), for the sake of brevity we omit their deduction. The proof of Theorem \ref{mainTHM} is therefore complete. \smallskip

Now that we have obtained the local well-posedness of solutions to gKdV and GKdV in $\mathcal{X}_T \subset H^1$, one of the natural questions would be to investigate if solutions can be extended to globally existing ones or they would only exist for a finite time. Asymptotic behavior of global solutions would be another question to investigate. We mention that Theorem \ref{mainTHM} can address solutions with initial data that decay rather slowly, for example, with the decay rate $\frac1{|x|^{1/2 +}}$ as $|x| \to \infty$. 
However, Theorem \ref{mainTHM} can not handle the data, which decay faster than a polynomial (for example, exponential decay such as in the ground state). To address that we do numerical investigations, results of which we describe in the next sections. 


\section{Numerical study: Single-bump data}\label{S:Numerical results}

We now study solutions to gKdV and GKdV numerically (a brief review of the numerical approach is described in Appendix). We first consider a physically relevant version of the modular KdV, Schamel's equation \eqref{E:Schamel} (i.e., GKdV with $\alpha = \frac12$) and compare the evolution of positive and negative generic data in that case. After that,  we consider powers that are either very close to zero or close to 1 in order to better understand the influence of the power $\alpha$ and of the modulus in the nonlinearity (gKdV vs. GKdV); thus, we take $\alpha=\frac19$ and $\alpha=\frac79$, and later compare those cases with the integer powers $\alpha=1$ and $\alpha = 3$ in both equations. 
  
\subsection{Schamel's equation}
To start with, we consider Gaussian initial data 
$$
u_0(x)=A \, e^{-x^2},
$$ 
and show the time evolution of the solution $u(x,t)$ to the Cauchy problem \eqref{GK} for the positive and negative amplitudes $A=6$ and $A=-6$ in Figure \ref{F:profile-Schamel}. 
\begin{figure}[ht]
\includegraphics[width=0.32\textwidth]{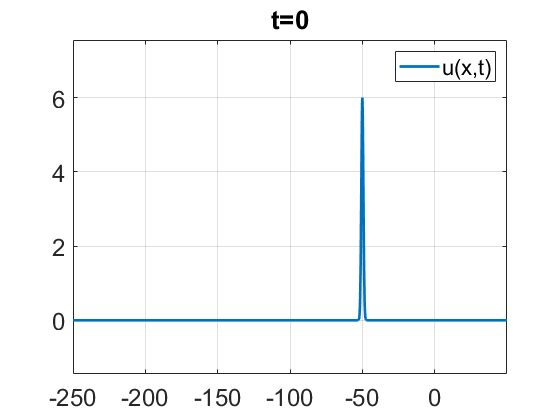}
\includegraphics[width=0.32\textwidth]{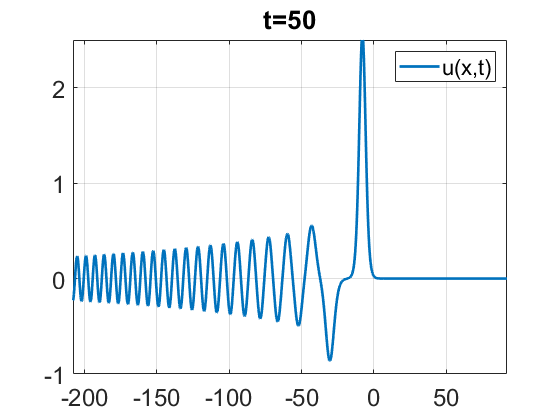}
\includegraphics[width=0.32\textwidth]{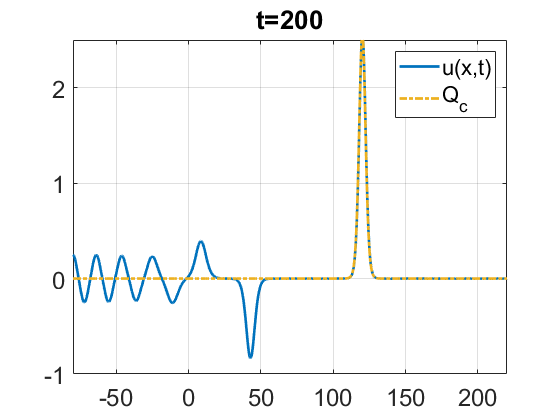}
\includegraphics[width=0.32\textwidth]{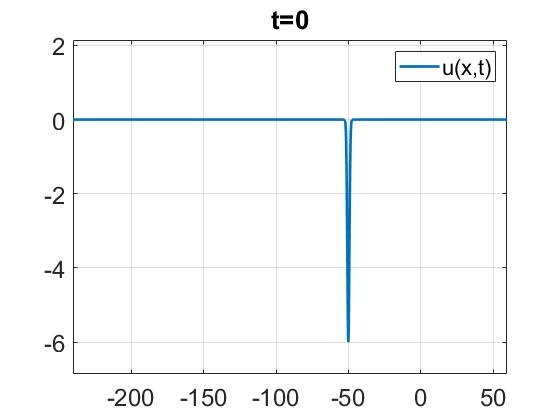}
\includegraphics[width=0.32\textwidth]{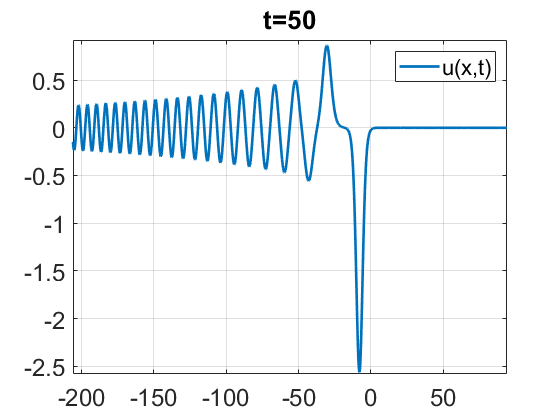}
\includegraphics[width=0.32\textwidth]{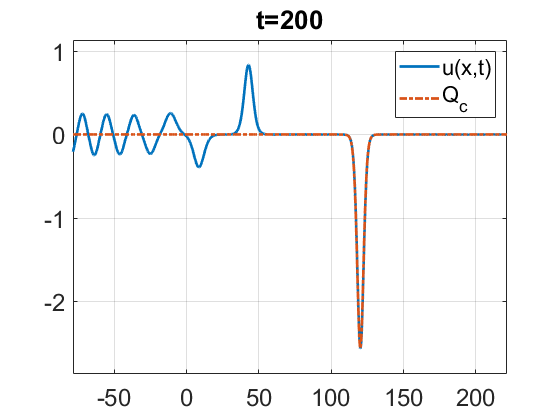}
\caption{\footnotesize Snapshots of time evolution in Schamel's equation of Gaussian data $u_0=A\, e^{-x^2}$ (left), at $t=50$ (middle) and $t=200$ (right) with the fitting to the rescaled soliton $Q_c$. 
Top row: $A=6$. Bottom row: $A=-6$.} 
\label{F:profile-Schamel}
\end{figure}
As typical for the KdV-type equations,  a part of the solution propagates to the right as a soliton (or several solitons) and another part of the solution produces dispersive oscillations to the left, referred to as the {\it radiation}, decaying toward negative infinity. Both of these parts are present, for example, at time $t=50$, see middle column of Figure \ref{F:profile-Schamel}. To confirm this further, we continue simulating both solutions up to time $t=200$, where we also fit the largest amplitude (in absolute value) lump with the rescaled soliton $Q_c$, the ground state rescaled solution of the equation \eqref{E:Q_c}, where the scaling parameter $c$, or the speed of the soliton, is obtained 
by taking the maximum height (at the time of fitting) as $\|u(t)\|_{L^\infty_x}=\|Q_c\|_{L^\infty_x}$, and then, finding the constant $c$ from \eqref{explground} and \eqref{E:Q_c}, see the $L^\infty_x$ norm in Figure \ref{F:supnorm-Schamel}.
Note that the time evolution of both conditions (see snapshots in the top and bottom rows of Figure \ref{F:profile-Schamel}) is symmetric with respect to the $x$-axis, as expected due to the absolute value in the nonlinearity (we show later that it will be different for nonlinearities without absolute value). 
In Figure \ref{F:profile-Schamel} (right column) one can notice that a second soliton is forming, and even a closer look will reveal a third soliton. The second soliton for the positive data (top row) is negative, and thus, can be tracked via $\min(u(t))$, since the second soliton has the largest magnitude among the negative solitons, this is shown in Figure \ref{F:supnorm-Schamel}. (By symmetry, one could track $\max(u(t))$ for the negative data in the bottom row.) Note that both graphs in Figure \ref{F:supnorm-Schamel} become horizontal after some time, confirming that the first and second solitons have formed.  

\begin{figure}[ht]
\includegraphics[width=0.32\textwidth]{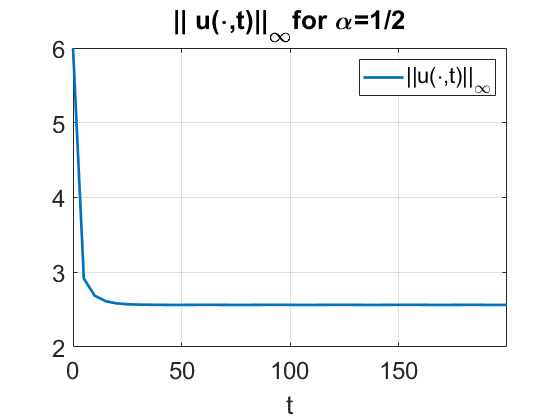}
\includegraphics[width=0.32\textwidth]{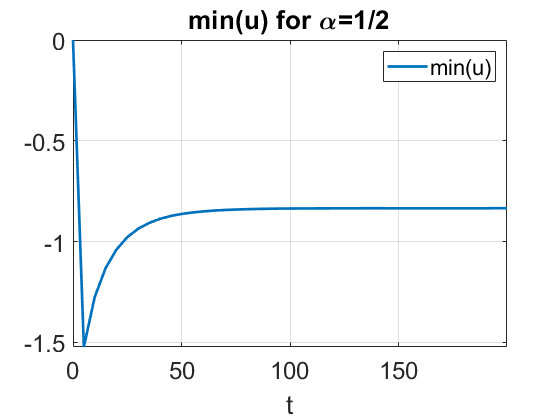}
\caption{\footnotesize Time dependence of the $L^\infty_x$ norm (left) and $\min(u(t))$ for the solutions to the Schamel equation shown in Figure \ref{F:profile-Schamel}.}
\label{F:supnorm-Schamel}
\end{figure}

\subsection{Perturbations of soliton}
From now on we consider powers $\alpha = \frac19$ and $\frac79$ in \eqref{gKdV} and \eqref{GK}  
to compare solutions of the gKdV and GKdV equations.
We start with the ground state initial data 
\begin{equation}\label{E:AQ}
\quad u_0(x)=A \, Q(x + a), \quad A \in \mathbb R\setminus \{0\}, ~~a \in \mathbb R.
\end{equation}

We take $u_0(x)=Q(x+25)$ as the initial condition for the gKdV and GKdV equations considering different powers of $\alpha$. 
To distinguish solutions to gKdV vs. GKdV, we denote by $u=u(x,t)$ the gKdV solution for \eqref{gKdV} and $v = v(x,t)$ the GKdV solution for \eqref{GK}. 

\begin{figure}[ht]
\includegraphics[width=0.32\textwidth]{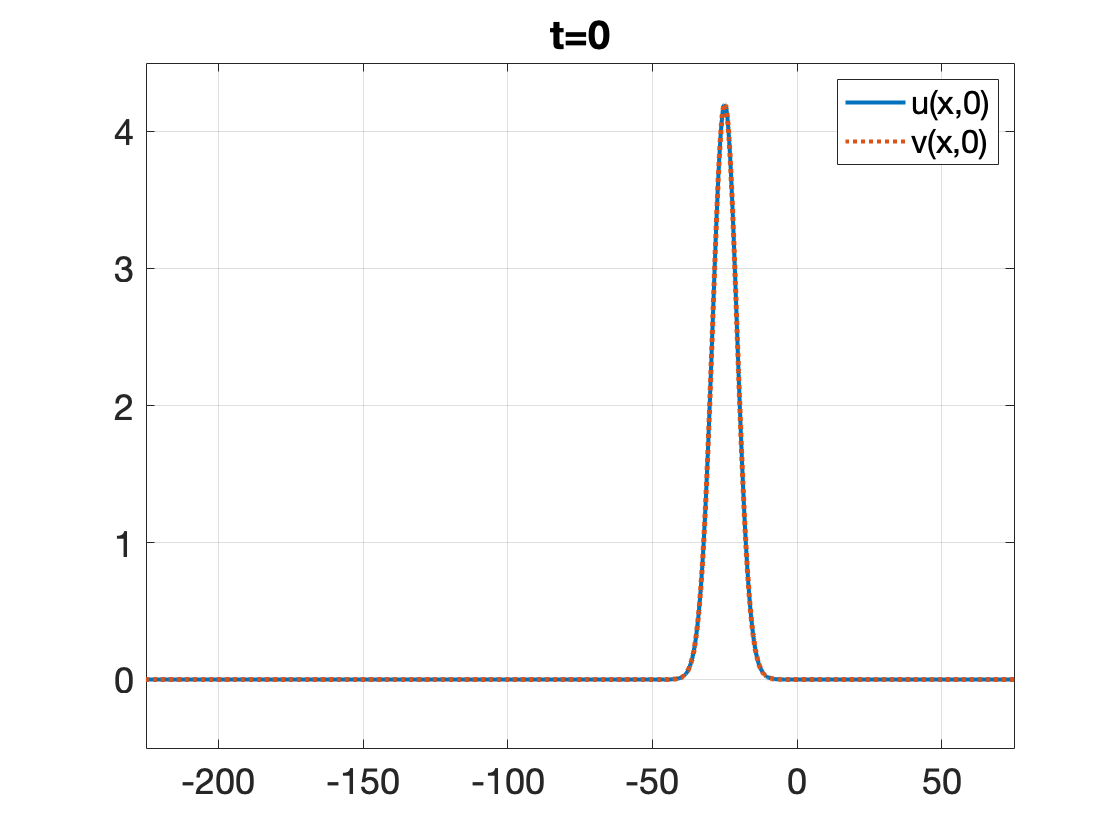}
\includegraphics[width=0.32\textwidth]{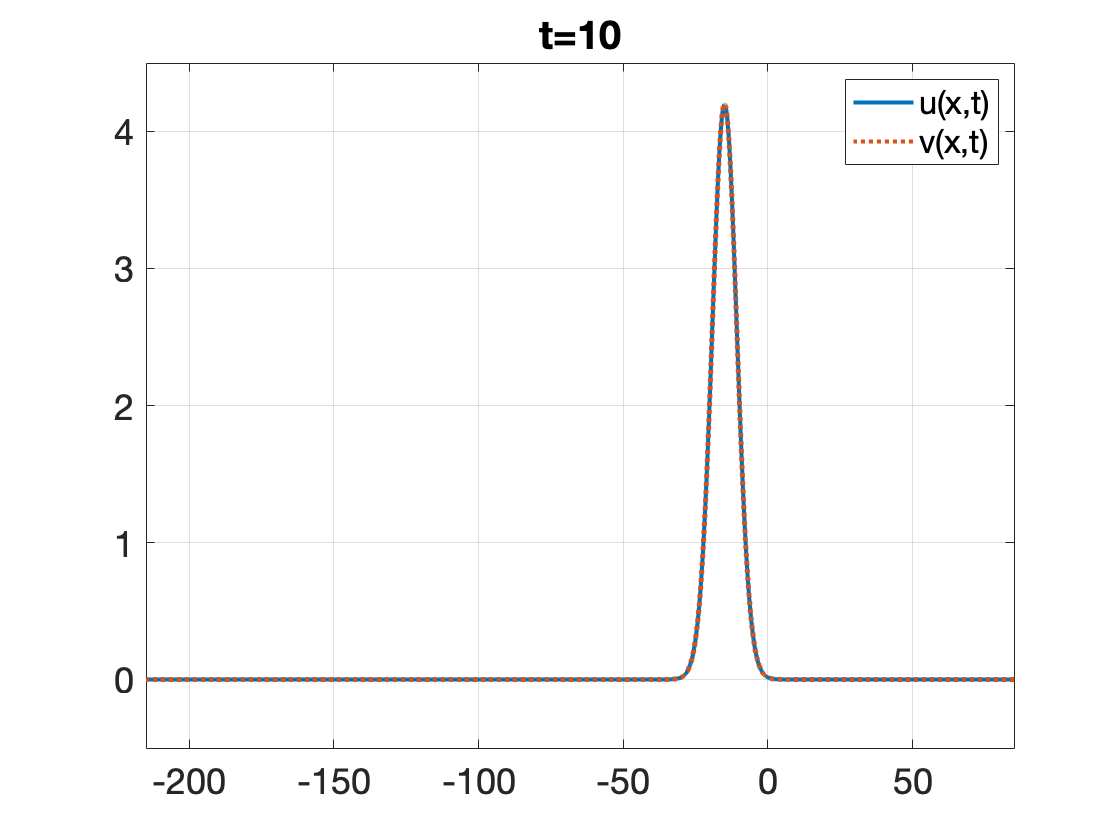}
\includegraphics[width=0.32\textwidth]{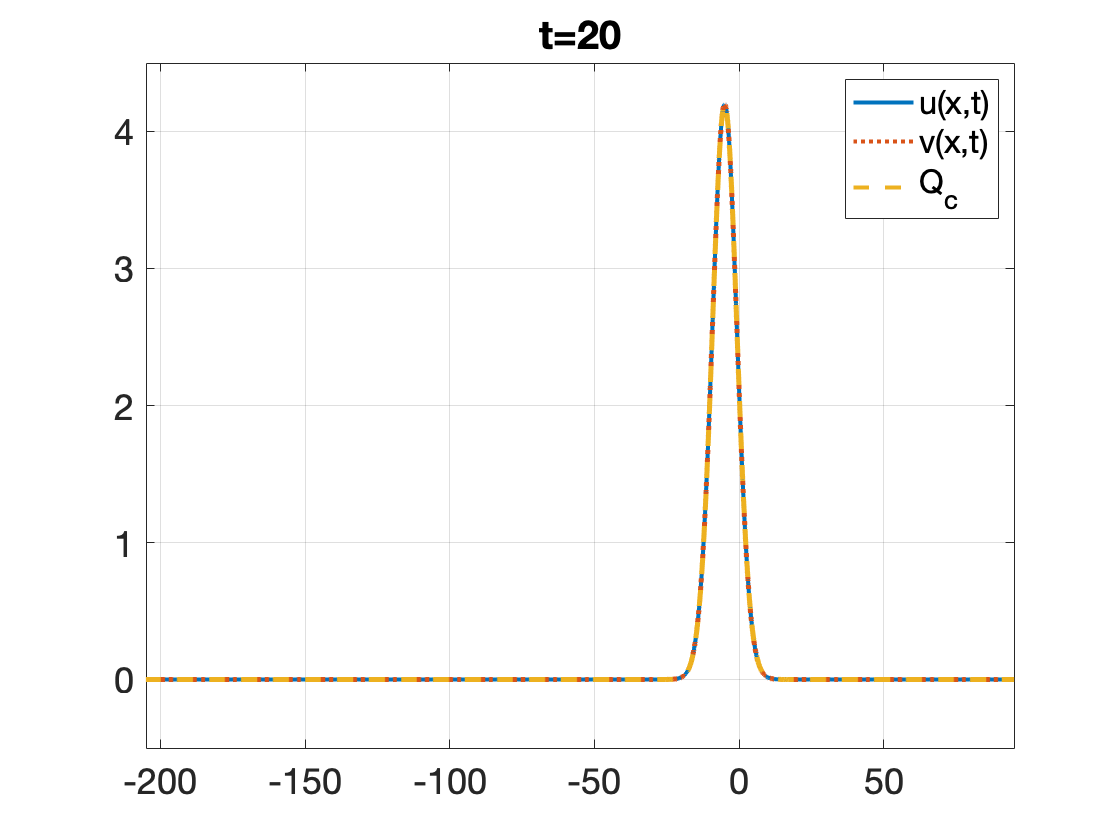}
\includegraphics[width=0.32\textwidth]{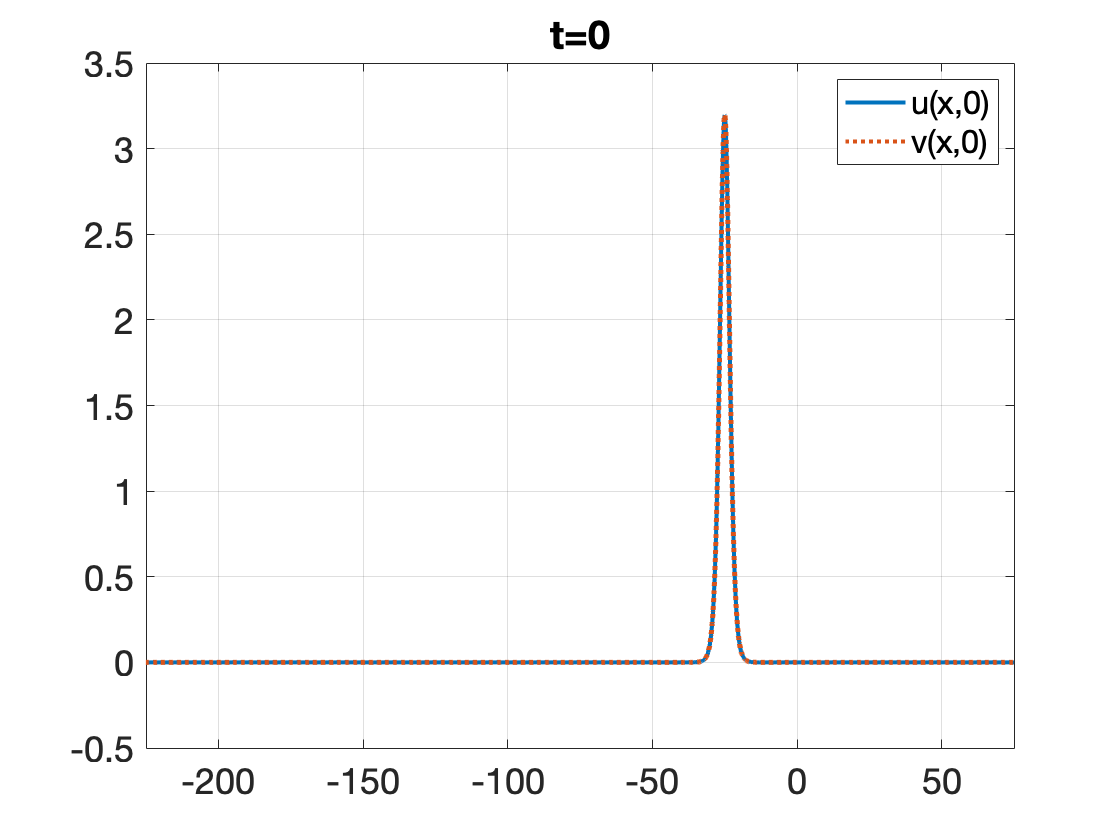}
\includegraphics[width=0.32\textwidth]{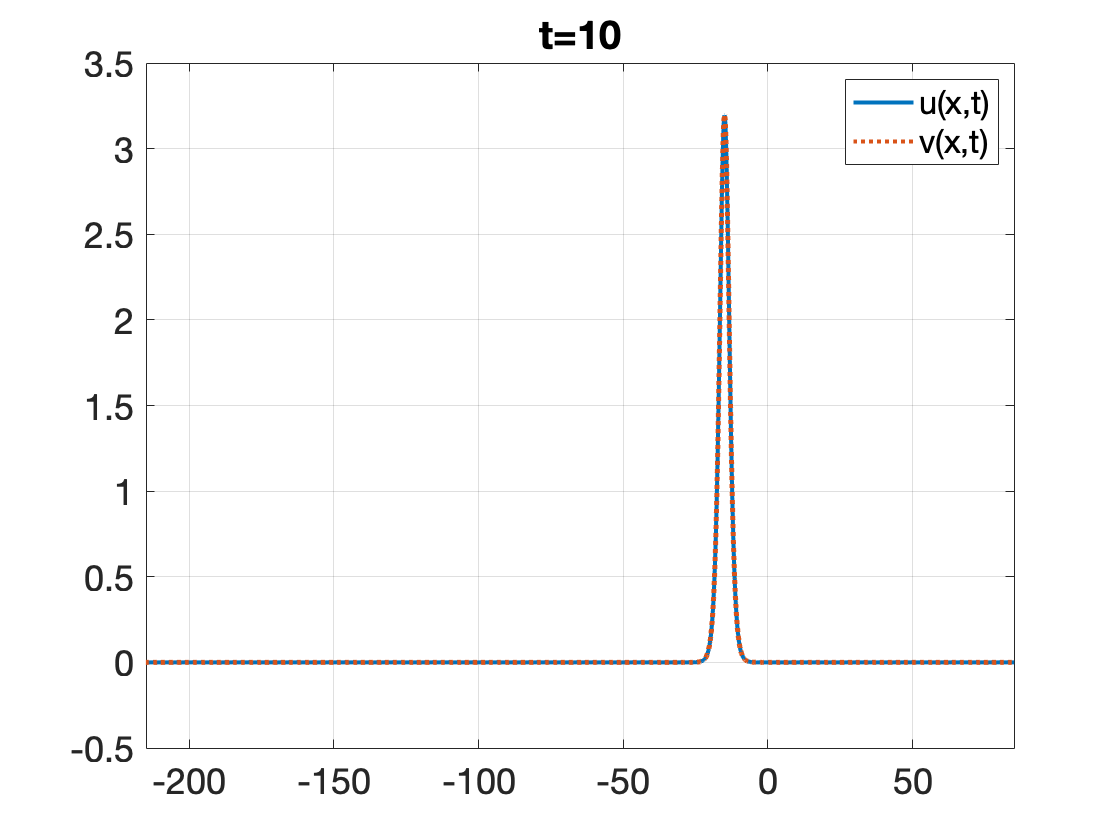}
\includegraphics[width=0.32\textwidth]{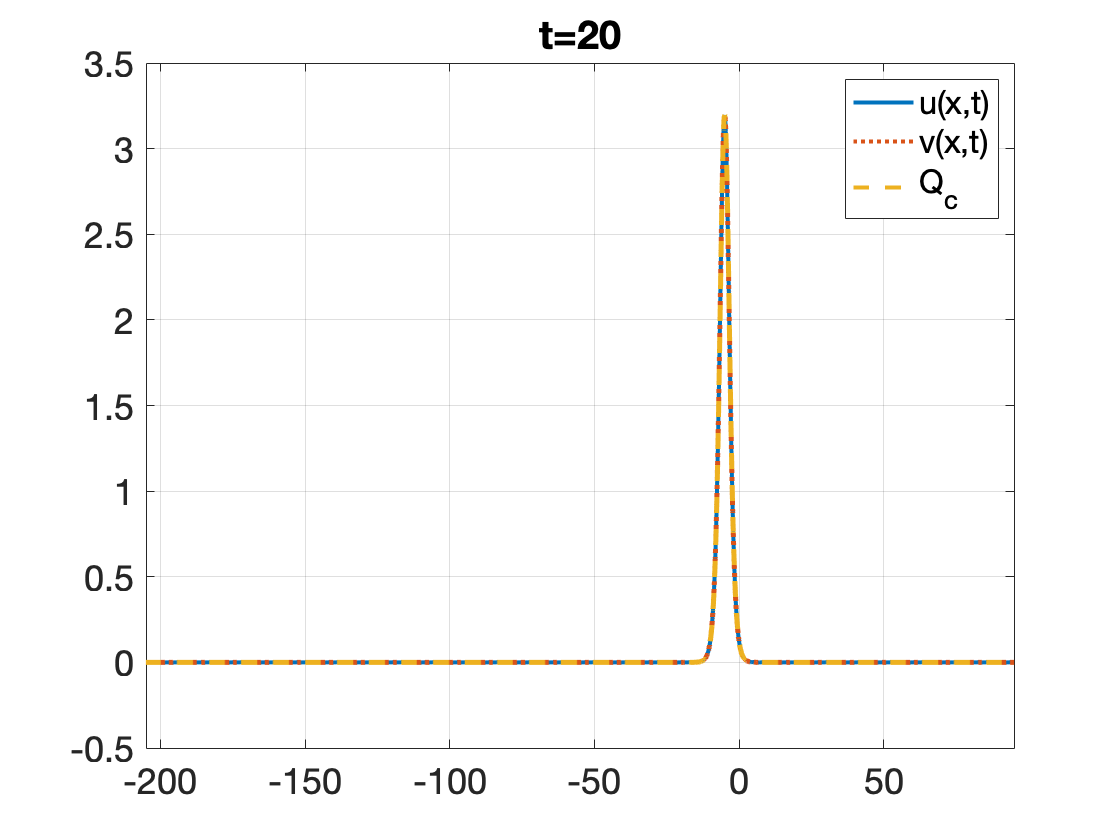}
\caption{\footnotesize Time evolution for $u_0=v_0 = Q(x+25)$ for $\alpha=\frac{1}{9}$ (top row) and   $\alpha=\frac{7}{9}$ (bottom row); solution $u$ of \eqref{gKdV} (solid blue) and $v$ of \eqref{GK} (dotted red). Right column: both solutions are fitted with shifted 
$Q_c = Q$ from \eqref{explground} ($c=1$). }
\label{F:profile Qfor1979}
\end{figure} 

Figure \ref{F:profile Qfor1979} shows snapshots of the time evolution of these solutions ($A=1$); both of them keep their shapes and they both propagate to the right as expected from a solitary wave in a KdV-type equation. Moreover, the two solutions coincide (with a difference at the level of a numerical error).  
Indeed, since the initial condition is positive, both solutions travel as a (positive) soliton, i.e., both $u(x,t)>0$ and $v(x,t)>0$ for all time $t$. Consequently, there is no difference between $u(x,t)$ and $v(x,t)$. 
\\

Time evolution becomes significantly different when initial data are negative, that is, \eqref{E:AQ} with  $A<0$. We look at the fractional powers $\alpha$ first, and later compare with the integer powers $\alpha$.  We take $u_0=v_0=-Q(x+25)$. 
Figure \ref{F:profile QnegA0} shows the time evolution of these conditions when $\alpha=\frac19$ and $\frac{7}{9}$.  Note, that for both nonlinearities the gKdV solution $u(x,t)$ (solid blue) all goes into the radiation (other nonlinearities have similar behavior as well, however, see more on emergence of solitons from radiation at the end of Section \ref{S:Gaussian-data}), while the GKdV evolution of the same data propagates the (negative) soliton, see the fitting with $Q_c$ in the right column of Figure \ref{F:profile QnegA0}, similar to what we observed in the Schamel equation. 

\begin{figure}[ht]
\includegraphics[width=0.32\textwidth]{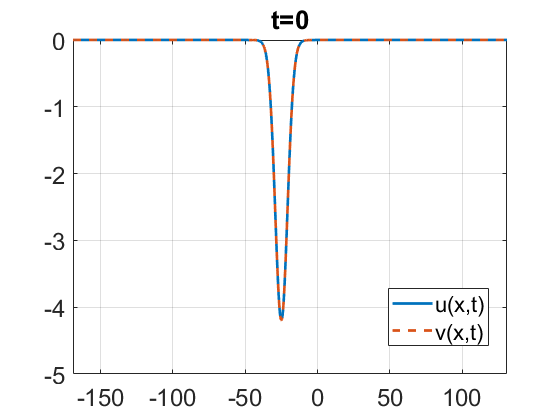}
\includegraphics[width=0.32\textwidth]{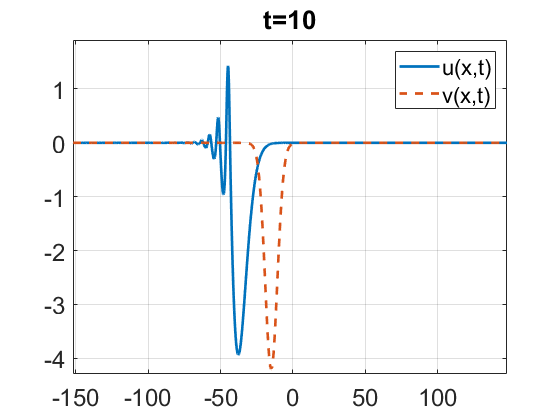}
\includegraphics[width=0.32\textwidth]{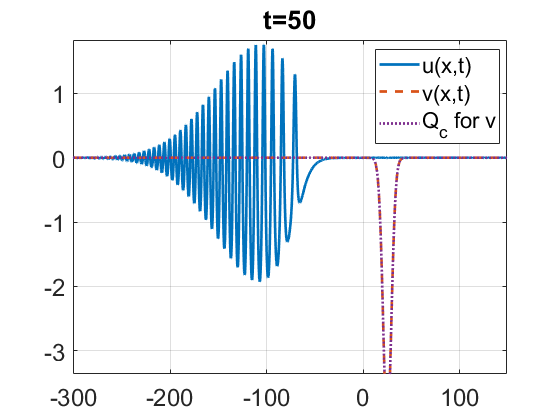}
\includegraphics[width=0.32\textwidth]{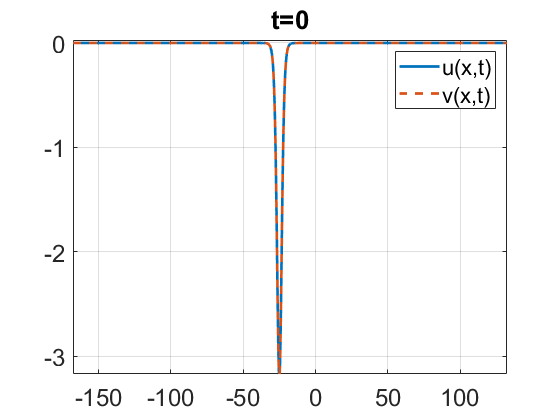}
\includegraphics[width=0.32\textwidth]{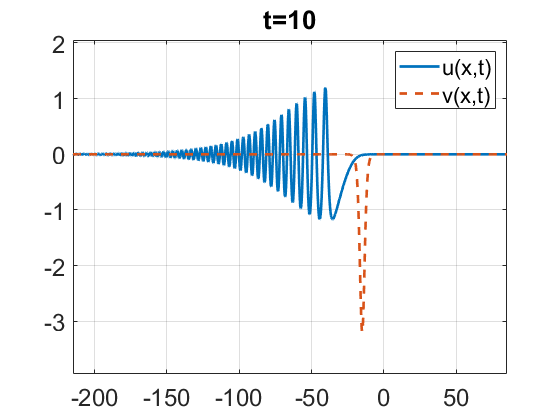}
\includegraphics[width=0.32\textwidth]{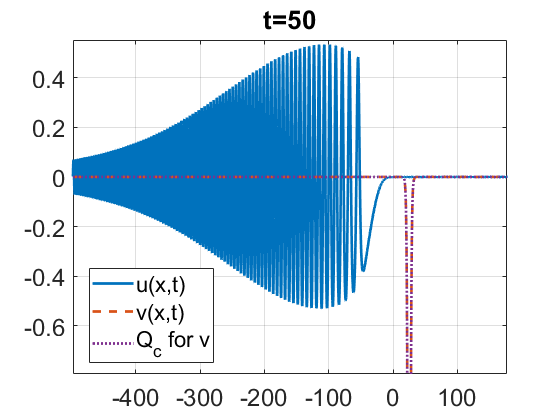}
\caption{\footnotesize Time evolution for $u_0=v_0= - Q(x+25)$ 
for $\alpha=\frac{1}{9}$ (top row) and $\alpha=\frac{1}{9}$ (bottom row).  
Right column: the GKdV solution $v(x,t)$ (dashed red) fitted 
to shifted $Q_c$ (dotted magenta). }
\label{F:profile QnegA0}
\end{figure}

\begin{figure}[ht]
\includegraphics[width=0.32\textwidth]{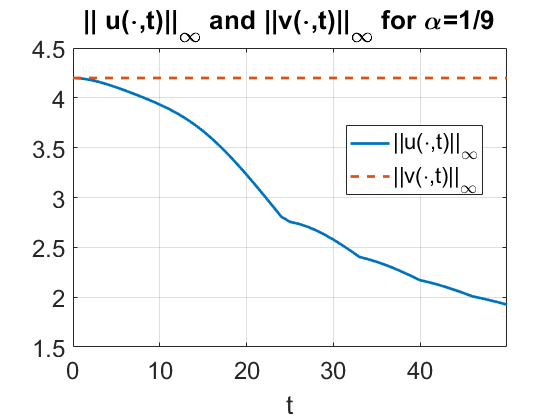}
\includegraphics[width=0.32\textwidth]{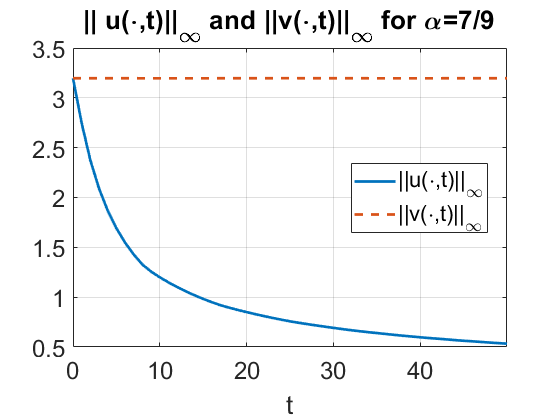}
\caption{\footnotesize Comparison of the $L^\infty_x$ norms for gKdV and GKdV solutions: 
$\alpha=\frac{1}{9}$ (left), $\alpha=\frac{7}{9}$ (right).}
\label{F:A NQ}
\end{figure}

\smallskip
 
In Figure \ref{F:A NQ}, we track the quantities $\|u(t)\|_{L^\infty_x}$ and 
$\|v(t)\|_{L^\infty_x}$ for $\alpha=\frac{1}{9}$ (left) and $\alpha=\frac{7}{9}$ (right). Observe that the GKdV solutions (red dash) in both cases of nonlinearity have a constant $L^\infty_x$ norm, confirming the propagation of the solitons $-Q$ (in the top row of Figure \ref{F:profile QnegA0}) and $-Q$ (in the bottom row), since the $L^\infty_x$ norm of $v$ (red dash) in Figure \ref{F:A NQ} shows the constant height as well as the fitting in Figure \ref{F:profile QnegA0} (right column) matches the shape. On the other hand, the gKdV solutions (blue solid line in Figure \ref{F:A NQ}) decrease their height as they radiate away to the left. 
\smallskip

We next compare the low fractional powers of $\alpha$ with the integer powers, $\alpha = 1$ and $3$; the powers are chosen so we could compare with the known cases of the KdV equation, but still would be different from GKdV (due to the absolute value in the nonlinear term). The time evolution (for the same initial data \eqref{E:AQ} with $A=-1$) is shown in Figure \ref{F:profile QnegA}. The behavior is similar to the low powers $\alpha <1$ in both cases: the negative data radiates to the left (solid blue), and the GKdV solution propagates the negative soliton to the right, see the fitting for $v(x,t)$ with shifted $-Q$ in the right column of Figure \ref{F:profile QnegA}. 
\begin{figure}[ht]
\includegraphics[width=0.32\textwidth]{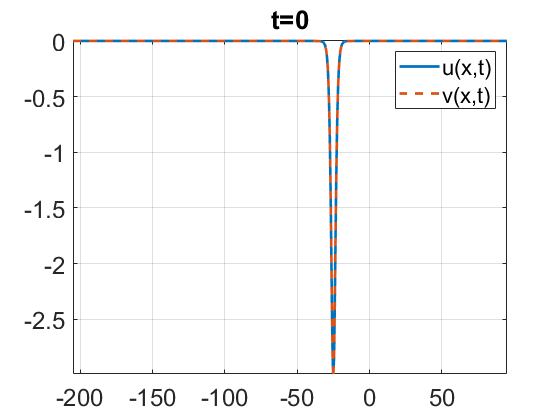}
\includegraphics[width=0.32\textwidth]{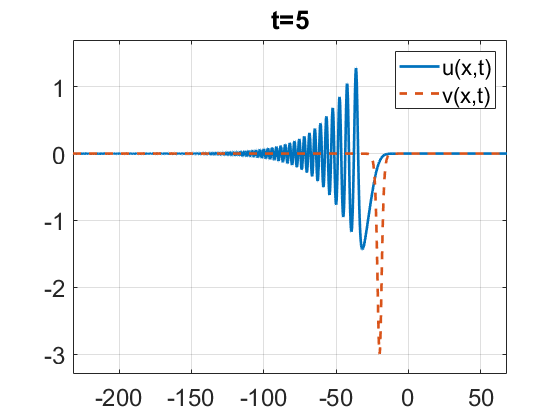}
\includegraphics[width=0.32\textwidth]{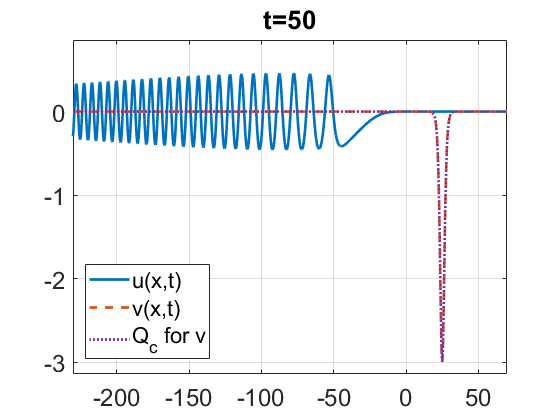}
\includegraphics[width=0.32\textwidth]{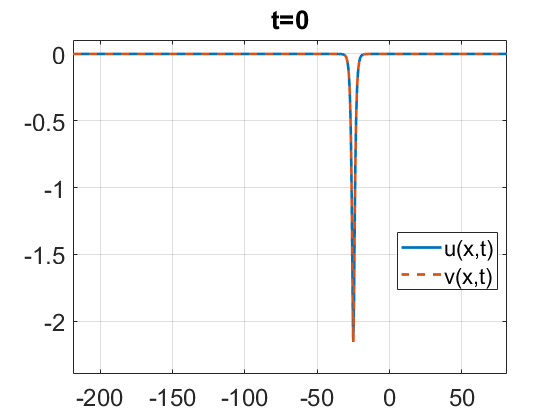}
\includegraphics[width=0.32\textwidth]{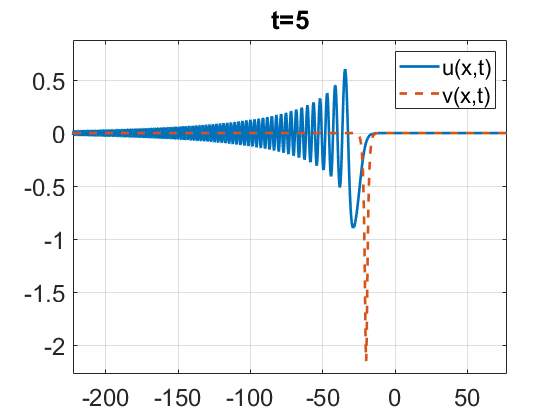}
\includegraphics[width=0.32\textwidth]{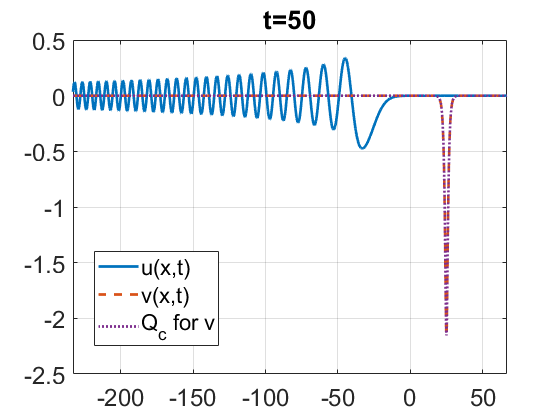}
\caption{\footnotesize Time evolution for $u_0=v_0= - Q(x+25)$. 
Top row: $\alpha=1$. Bottom row: $\alpha=3$.}
\label{F:profile QnegA}
\end{figure}

We conclude that solutions to gKdV and GKdV with negative initial data differ significantly; in the gKdV equation the negative soliton $Q$ (or its perturbation $A\,Q$) right away disperses to the left into the radiation (see more on that at the end of Section \ref{S:Gaussian-data}), while the GKdV equation has a symmetric behavior (with the positive data) and propagates solitons, regardless of the initial data sign.


\subsection{Soliton resolution}\label{S:3.2}
The ``soliton resolution conjecture" has bemused the researchers since 60's and yet it is still far from being understood even in more simple, or more studied cases of the gKdV equation. It states that any solution will eventually evolve into a  
finite number of solitons plus radiation, i.e., $u(x,t) \approx
\sum_{j=0}^{N} Q_{c_j}(x-c_jt-a_j)+r(x,t)$ as $t\rightarrow \infty$, 
where $r(x,t)$ is the radiation 
and $Q_c$ is some rescaled version of a soliton with a shift $a_j$ and speed $c_j = c_j(t) \to c_j^*$, \cite{T2004b, S2006}. This was an intriguing direction for our study as well, and in this section we report our numerical investigations on how various types of data resolve into solitons and radiation in both the gKdV and the GKdV equations. 


\subsubsection{Gaussian initial data}\label{S:Gaussian-data}
We start with $\alpha=\frac{1}{9}$ and consider the Gaussian initial data 
\begin{equation}\label{E:Gauss-data}
u_0=v_0=Ae^{-x^2}.
\end{equation} 
Snapshots of the time evolution for $A=6$ are shown in Figure \ref{F:profile EPos 19}. 
\begin{figure}[ht]
\includegraphics[width=0.32\textwidth]{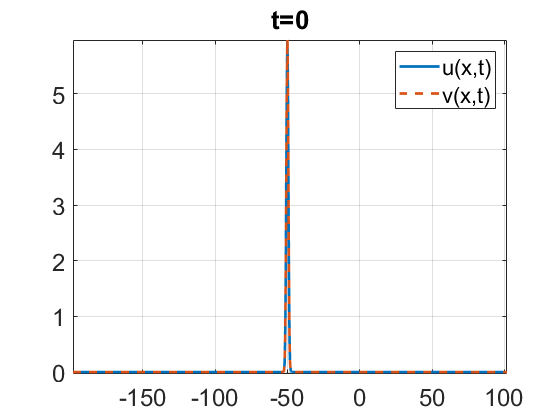}
\includegraphics[width=0.32\textwidth]{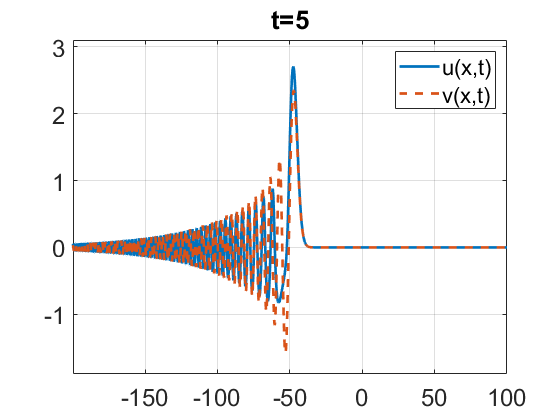}
\includegraphics[width=0.32\textwidth]{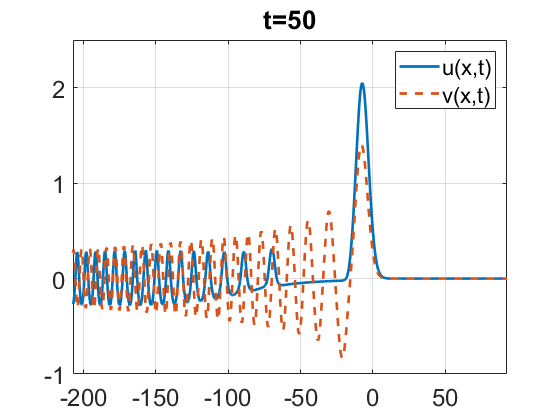}
\includegraphics[width=0.32\textwidth]{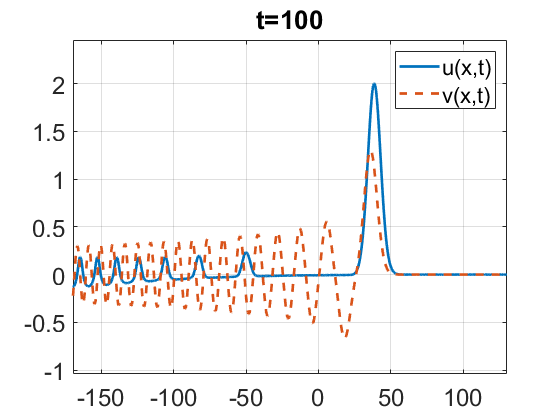}
\includegraphics[width=0.32\textwidth]{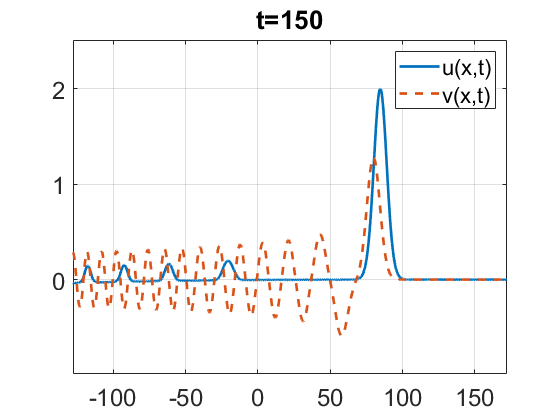}
\includegraphics[width=0.32\textwidth]{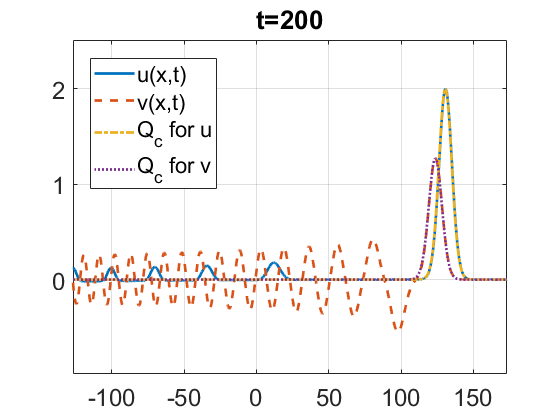}
\caption{\footnotesize Time evolution for $u_0=v_0=Ae^{-x^2}$,  $A=6$ and $\alpha=\frac{1}{9}$.}
\label{F:profile EPos 19}
\end{figure}
We specifically made the simulation for longer time (up to $t=200$) to show formation of what Miura called the ``parade of solitons", i.e., the formation of a train of solitons with decreasing heights (or speed), and thus, eventually separating further and further from each other, see separating bumps 
(solid blue line) in the bottom row of Figure \ref{F:profile EPos 19}. For $\alpha = \frac79$ we show the solution with the same initial data in Figure \ref{F:profile EPos 19b}.

\begin{figure}[ht]
\includegraphics[width=0.32\textwidth]{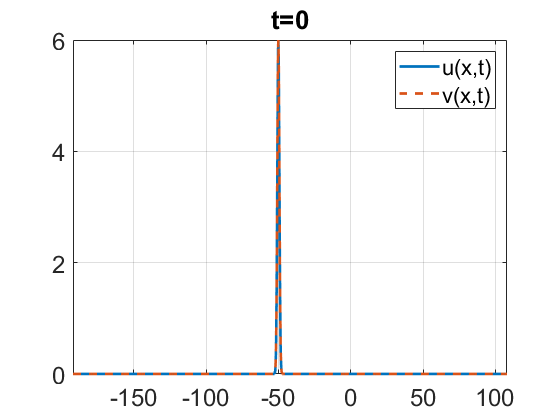}
\includegraphics[width=0.32\textwidth]{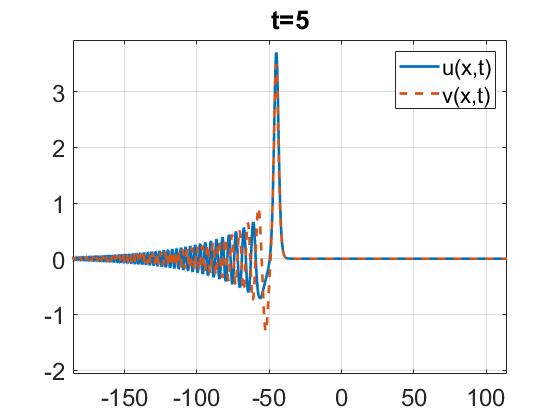}
\includegraphics[width=0.32\textwidth]{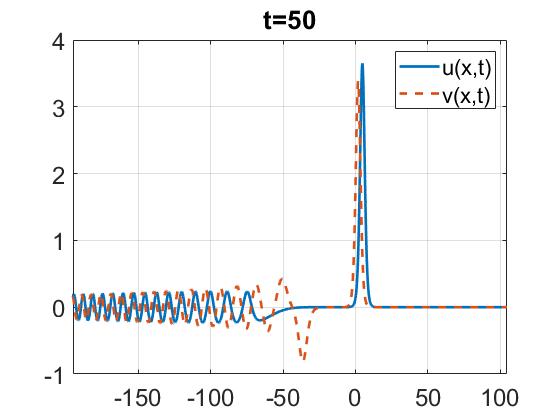}
\includegraphics[width=0.32\textwidth]{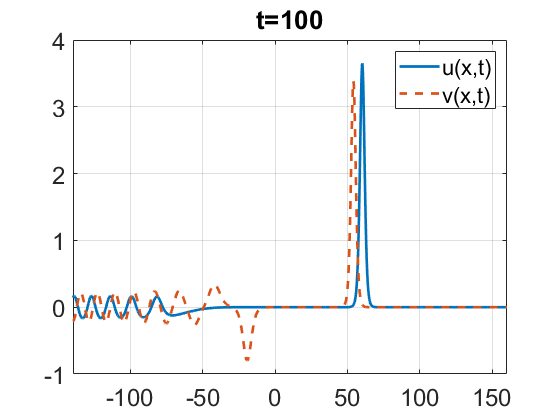}
\includegraphics[width=0.32\textwidth]{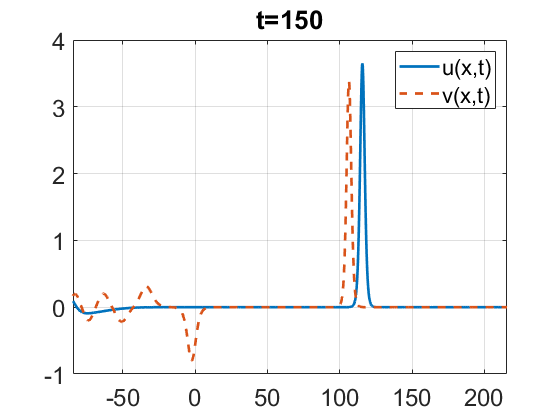}
\includegraphics[width=0.32\textwidth]{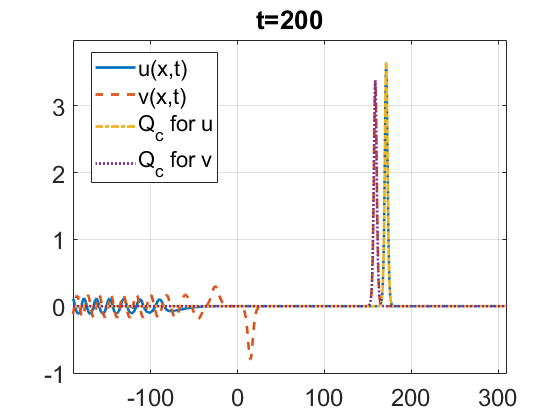}
\caption{\footnotesize Time evolution for $u_0=v_0=Ae^{-x^2}$, $A=6$ 
and $\alpha=\frac{7}{9}$.}
\label{F:profile EPos 19b}
\end{figure}
Computationally, it is often difficult to see the time evolution of {\it several} gKdV solitons in one plot, since a significantly large computational domain would be needed with a sufficiently refined mesh, however, in the case of small powers $\alpha \ll 1$ (in our computations, $\alpha=\frac19$), such a large domain is not needed, compared to higher powers. This observation might be useful for future studies, when it is important to track several solitons. (Compare this with the bottom row in Figure \ref{F:profile EPos 19b}, where even on a larger interval it is basically not possible to see emerging smaller solitons, thus, an advantage of considering very low powers of $\alpha$ in the nonlinear term.)  

{\bf Remark.} Note that the solitons forming in the GKdV model (with the absolute value) have a smaller height (or speed) compared to the gKdV model, this is due to the absolute value in the energy's second term (the potential energy term) in the GKdV equation, compare \eqref{E:GKdV} vs. \eqref{E:gKdV}. In Figures \ref{F:profile EPos 19} and \ref{F:profile EPos 19b}, one can see the difference in the height in the blue solid and red dash curves of solitons (and also the shift of the blue bump being more and more ahead of the red dash bump). In the last plot (when $t=200$),  we fit both bumps with the rescaled solitons $Q_{c}$ (of the respective heights for $u$ and $v$), see the bottom right subplots in both figures. We also observe that the GKdV solution $v(x,t)$ forms a second negative soliton (see the dash red curve in the bottom row in Figure \ref{F:profile EPos 19b}). It might also be the case that yet another (third) positive soliton forms, thus, the emerging smaller solitons alternate the positive and negative amplitudes.  The gKdV model forms only positive solitons, as discussed above. The higher the power $\alpha$ is (for a given amplitude $A$), 
there will be less mass ($L^2$ norm) to form another (smaller) soliton. 

\begin{figure}[ht]
\includegraphics[width=0.32\textwidth]{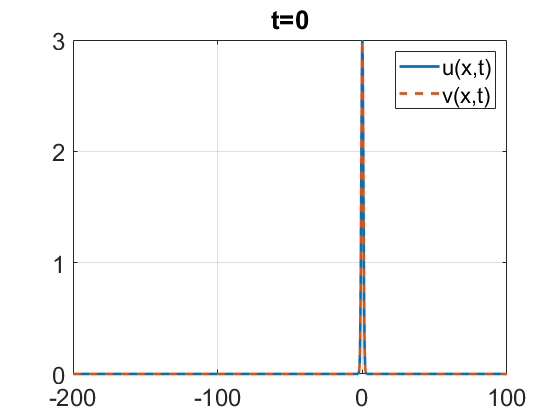}
\includegraphics[width=0.32\textwidth]{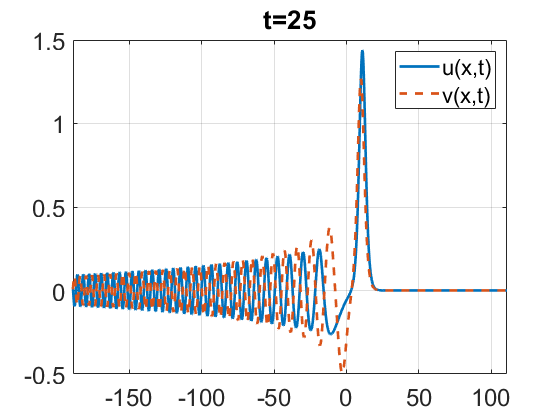}
\includegraphics[width=0.32\textwidth]{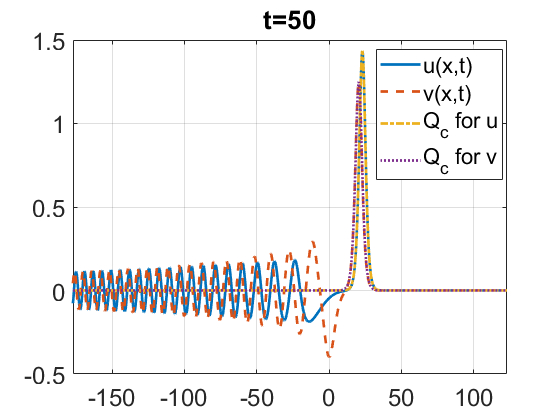}
\includegraphics[width=0.32\textwidth]{gaup11_p1.png}
\includegraphics[width=0.32\textwidth]{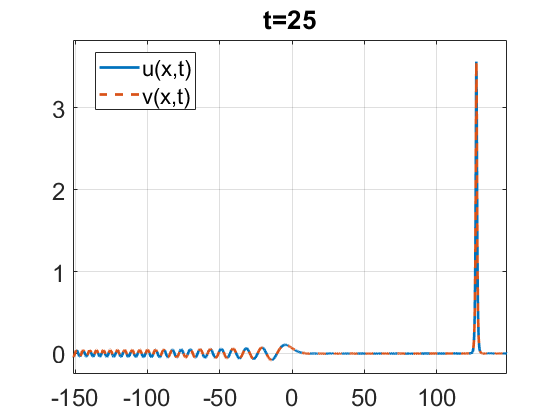}
\includegraphics[width=0.32\textwidth]{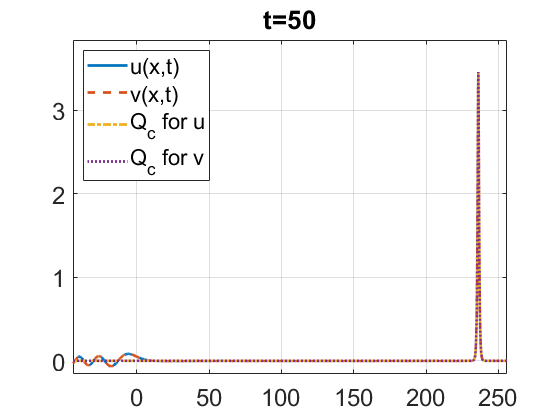}
\includegraphics[width=0.35\textwidth, height=4cm]{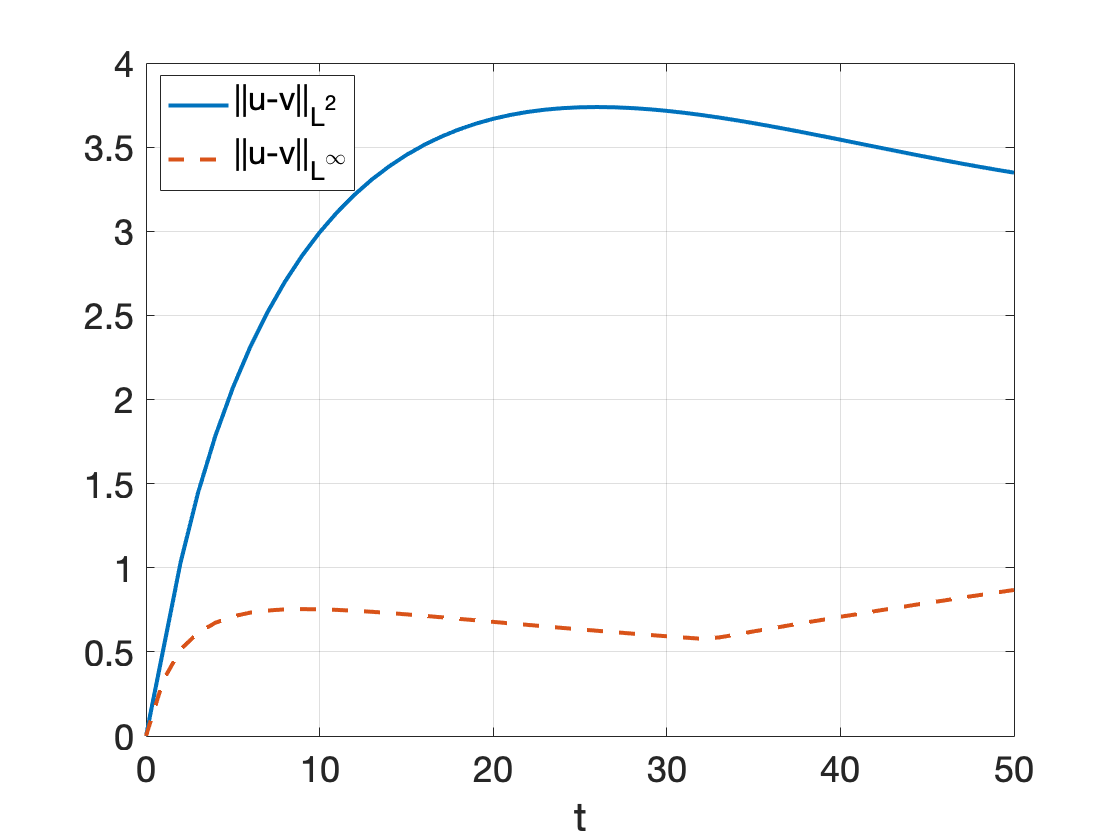}
\includegraphics[width=0.35\textwidth, height=4cm]{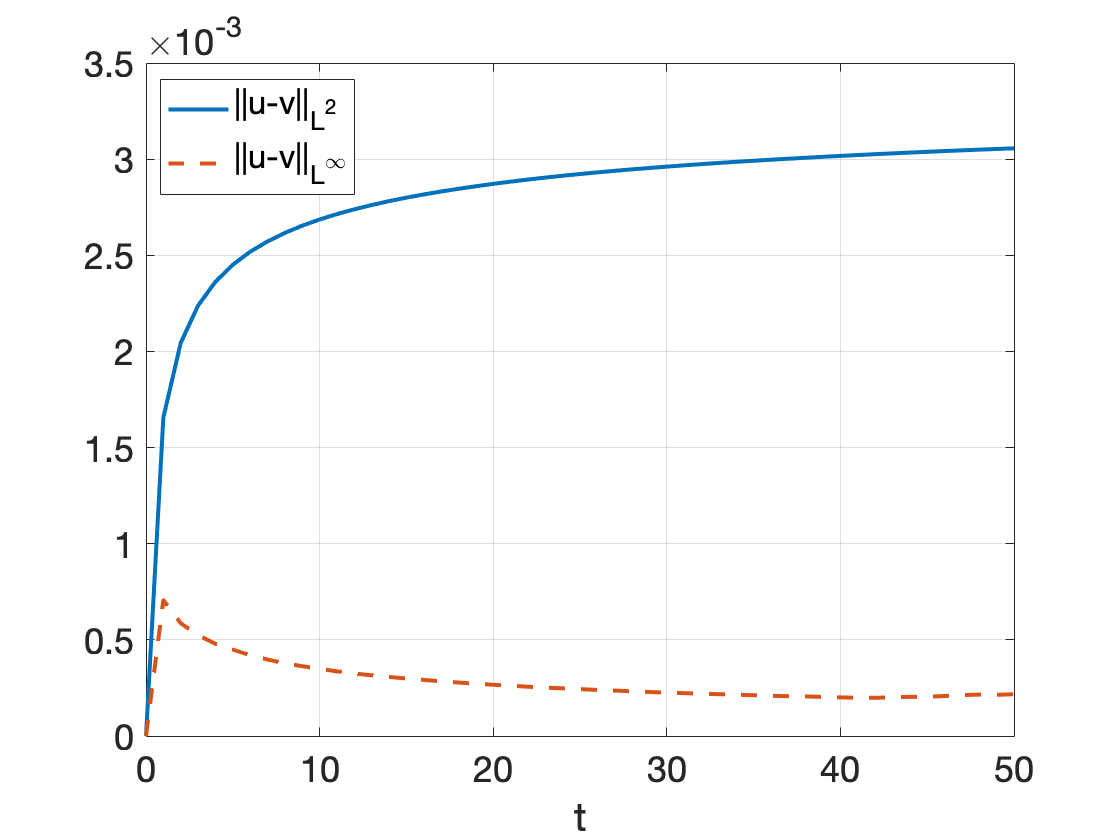}
\caption{\footnotesize  Time evolution for $u_0=v_0 = Ae^{-x^2}$, $A=3$. 
Top Row: $\alpha=1$.  
Middle Row: $\alpha=3$. 
Bottom row: the difference between $u$ and $v$ for $\alpha=1$ (left) and $\alpha=3$ (right).}
\label{F:profile EPos}
\end{figure}

For a comparison with the integer powers ($\alpha=1, 3$),
we show the time evolution of the same Gaussian data in Figure \ref{F:profile EPos}, taking $A=3$ (this amplitude is taken to fit the solution in approximately the same computations range). 
The left plots in both rows of Figure \ref{F:profile EPos} show the solution profiles for the gKdV equation $u(x,t)$ (solid lines) and GKdV equation $v(x,t)$ (dash lines) at times $t=0$ (solid yellow for $u$ and dotted black for $v$) 
and $t=25$ (solid blue for $u$ and dotted red for $v$), for $\alpha=1$ in the top row and $\alpha=3$ in the bottom row. 
The second column shows the solution profiles at time $t=50$. There, we also plotted the rescaled soliton solution $Q_c$ for the corresponding heights $c$. One can see that main lumps in both $u(x,t)$ and $v(x,t)$ fit quite well  the rescaled profiles $Q_c$. 
This indicates that both evolutions (gKdV and GKdV) resolve into the solitons and radiation: the GKdV solitons are shorter (and thus, slower) than the gKdV ones. In our numerical simulations, we tracked the solutions even longer, till $t=150$, observing moving to the right solitons, which completely separate from its radiation moving to the left (and in some cases from each other, where multiple solitons were formed). 
The $L^\infty_x$ and $L^2_x$ norms differences between the solutions $u(x,t)$ and $v(x,t)$ are plotted in the bottom row of Figure \ref{F:profile EPos At}. The difference in the $L^\infty_x$ norm indicates the difference in the height of the first (main) solitons between the two models and the difference in the $L^2_x$ norm is larger as it also incorporates the difference in the radiation (however, note the $10^{-3}$ scale in the right plot for $\alpha=3$). 

\begin{figure}[ht]
\includegraphics[width=0.4\textwidth]{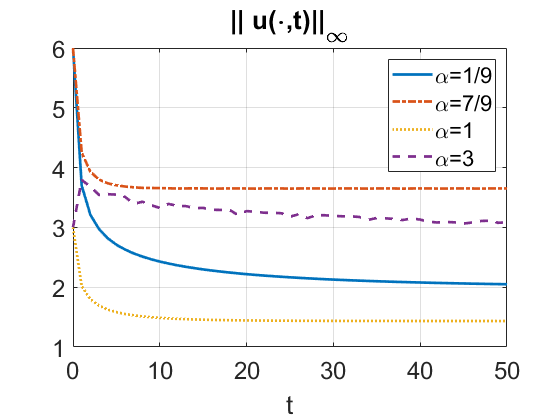}
\includegraphics[width=0.4\textwidth]{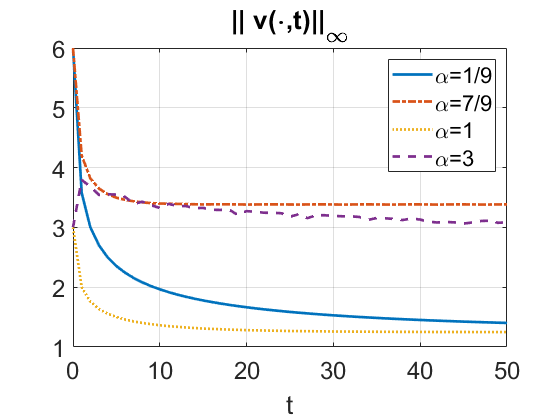}
\caption{\footnotesize  Time dependence of the $L^\infty_x$ norm 
for the solutions to gKdV (left) and GKdV (right) for different values of $\alpha$
from the positive Gaussian data as in Figure \ref{F:profile EPos}.}
\label{F:profile EPos At}
\end{figure}

To further confirm asymptotic convergence to solitons for solutions with positive amplitude Gaussian data in the considered cases of the power $\alpha$, we plot the time dependence of the $L^\infty_x$ norm in Figure \ref{F:profile EPos At}. 
The norms (or height) of all solutions converge to horizontal asymptotes, though in some cases slower than in others, for example, in the case of $\alpha=\frac79$ and $\alpha=1$ the horizontal asymptote forms almost immediately and in other cases it takes longer than $t=50$ time to level. This indicates the solutions asymptotically approach some stable state.   
Moreover, by comparing the two graphs, one can note that $u(x,t)$ always forms solitons of larger amplitude. 


Next, we show the evolution for the negative amplitude Gaussian initial data in all four cases of $\alpha = \frac19, \frac79, 1, 3$. 
As we have already seen the negative data may lead to very different behavior of solutions to the two equations gKdV  \eqref{gKdV} and GKdV \eqref{GK}, we confirm this here as well, moreover, we observe a different process of soliton generation in the negative data in gKdV (see Figures \ref{F:profile Ngau 19} and \ref{F:profile Ngau 59}).

\begin{figure}[ht]
\includegraphics[width=0.32\textwidth]{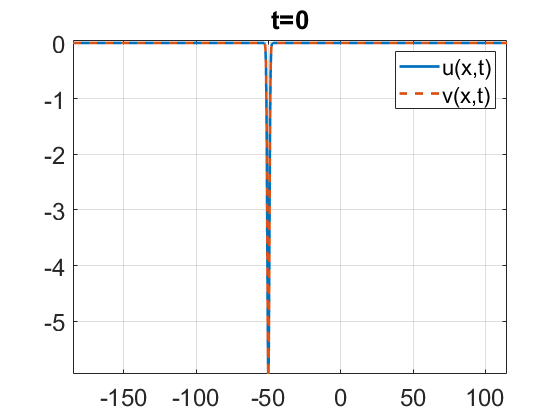}
\includegraphics[width=0.32\textwidth]{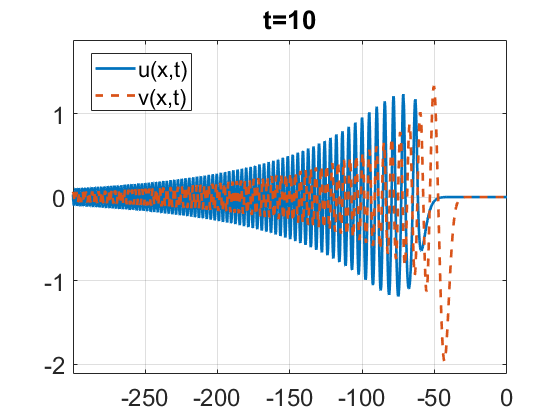}
\includegraphics[width=0.32\textwidth]{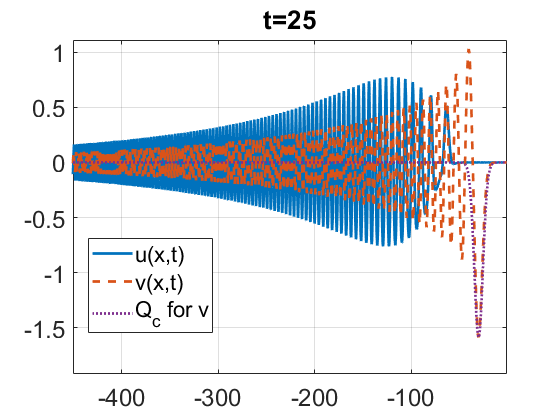}
\includegraphics[width=0.32\textwidth]{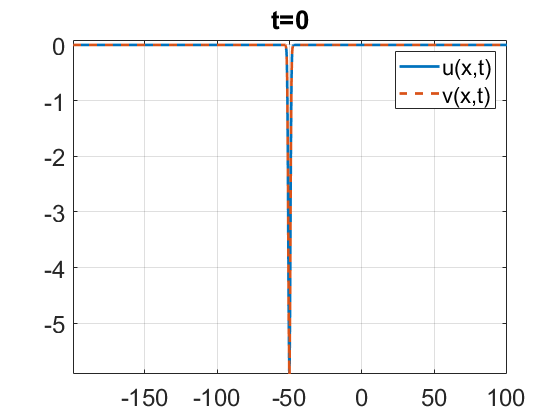}
\includegraphics[width=0.32\textwidth]{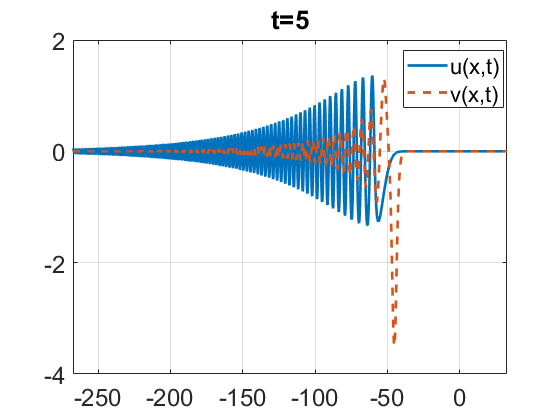}
\includegraphics[width=0.32\textwidth]{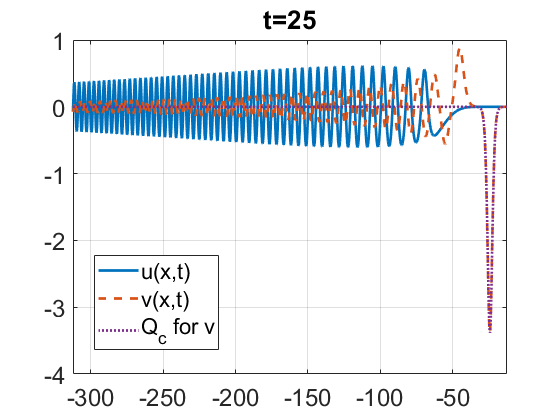}
\includegraphics[width=0.32\textwidth]{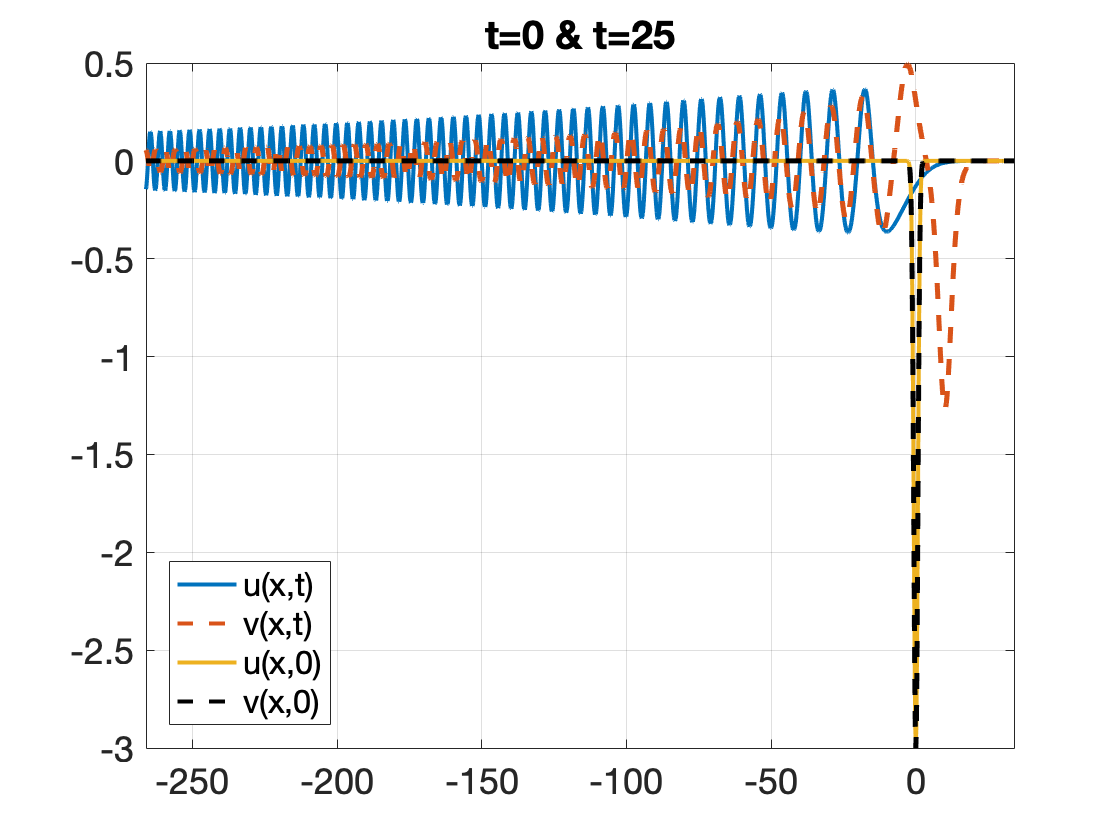}
\includegraphics[width=0.32\textwidth]{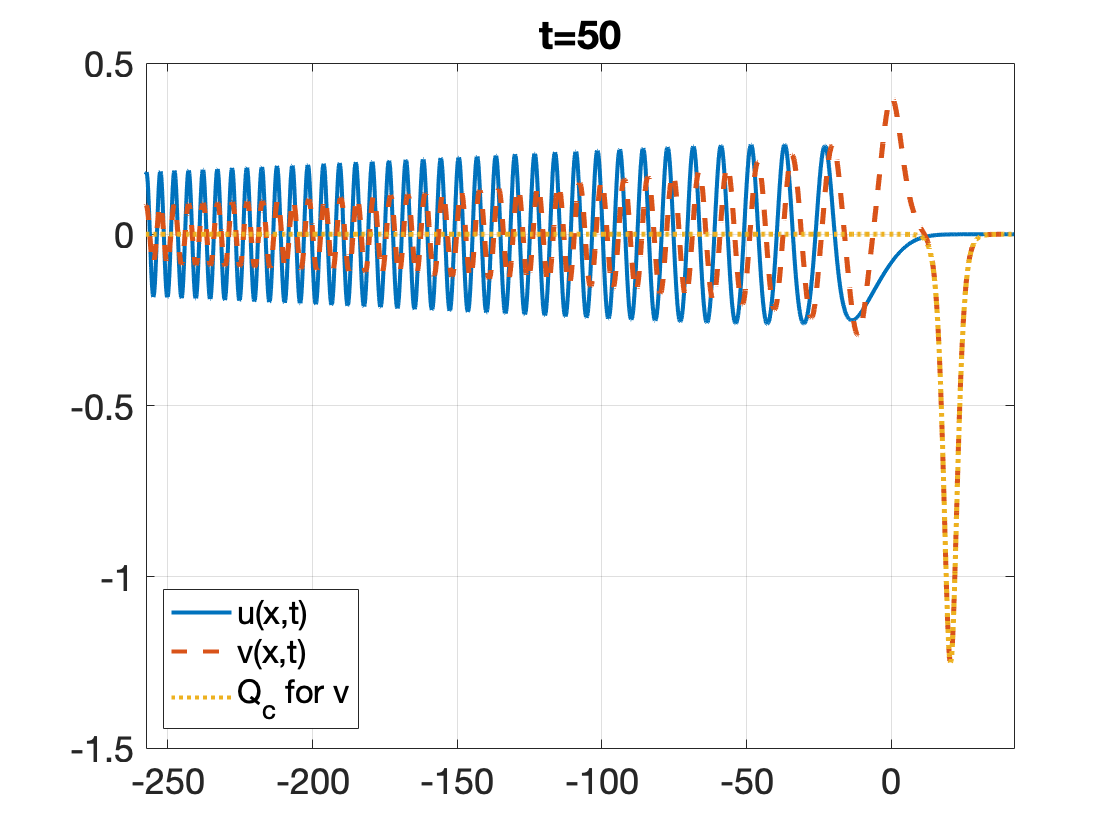}
\includegraphics[width=0.32\textwidth]{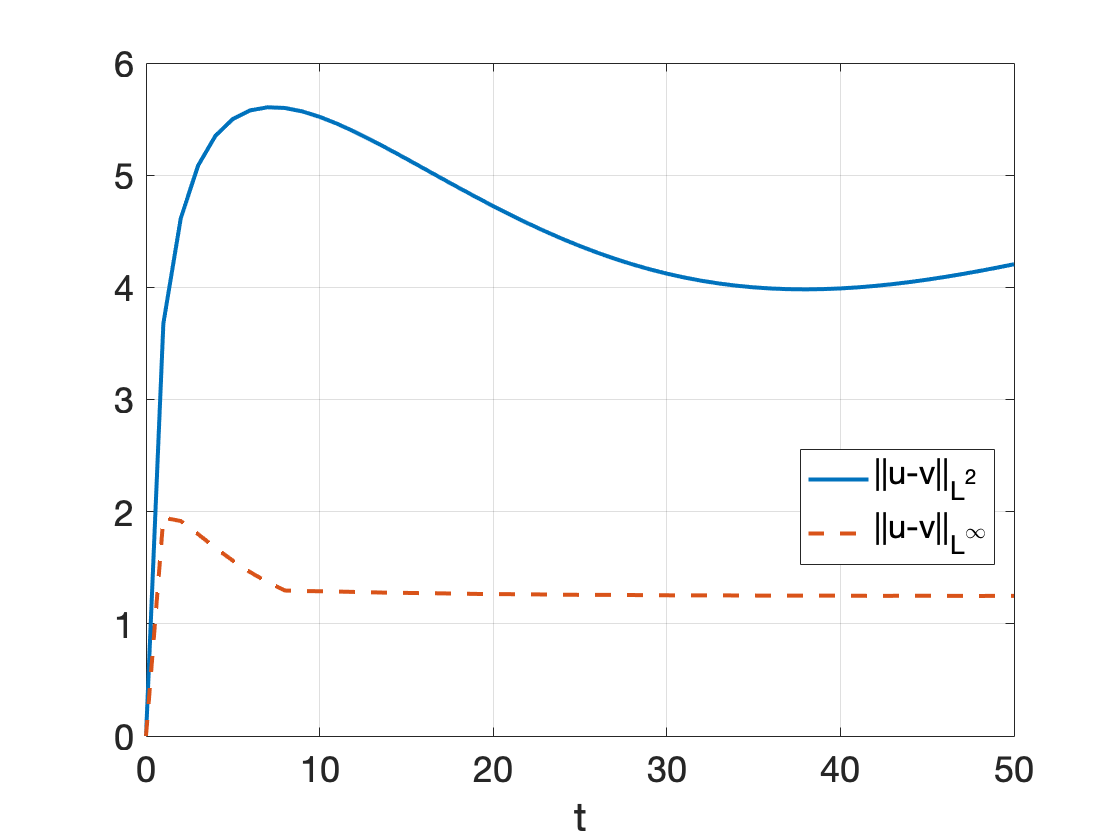}
\includegraphics[width=0.32\textwidth]{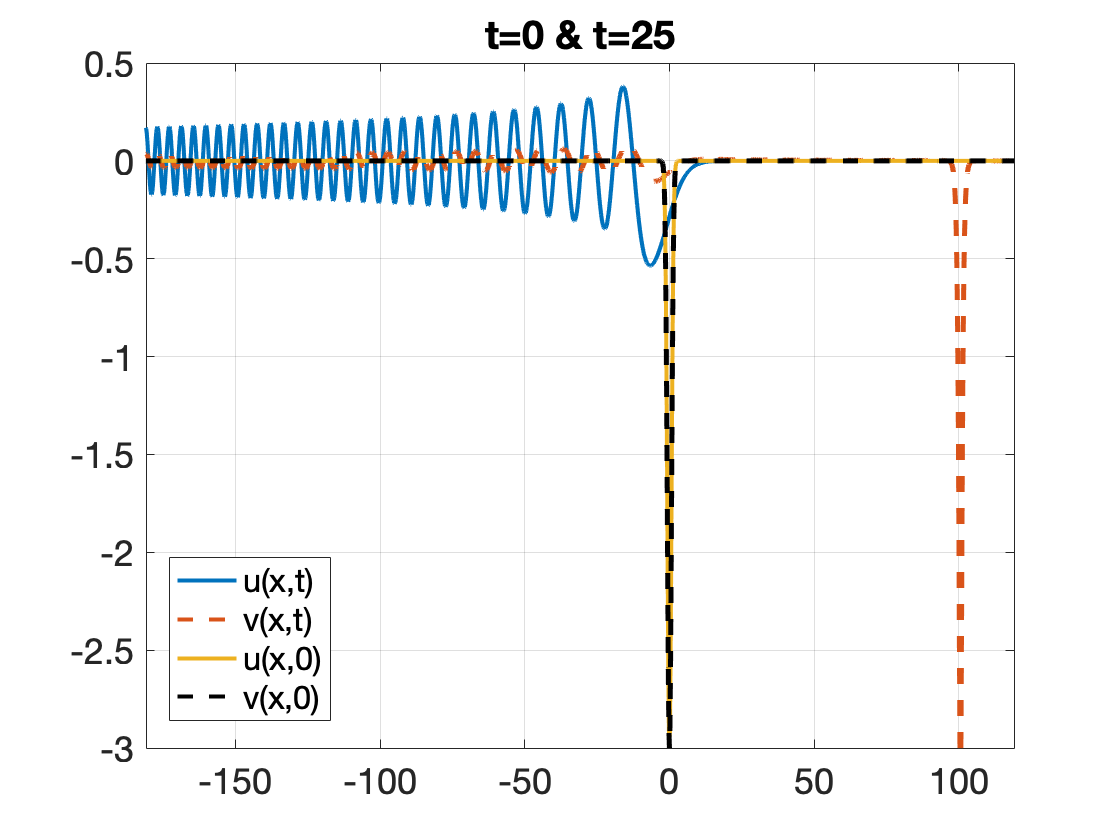}
\includegraphics[width=0.32\textwidth]{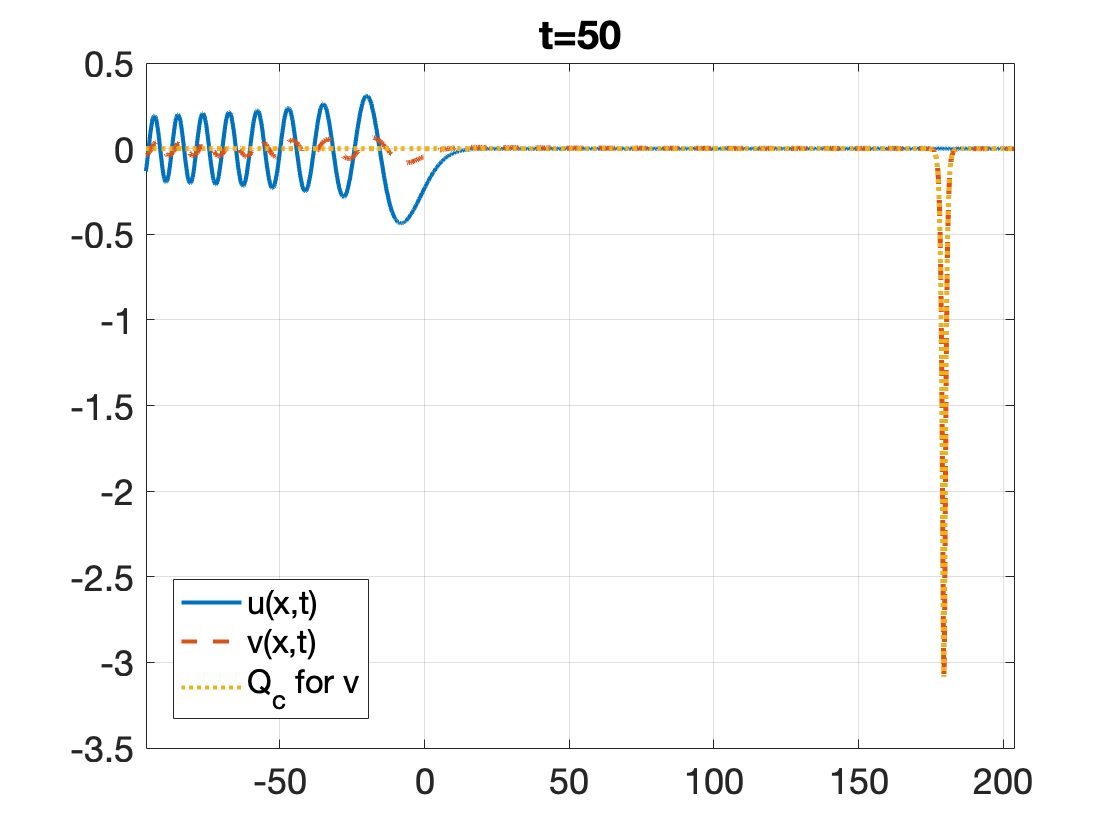}
\includegraphics[width=0.32\textwidth]{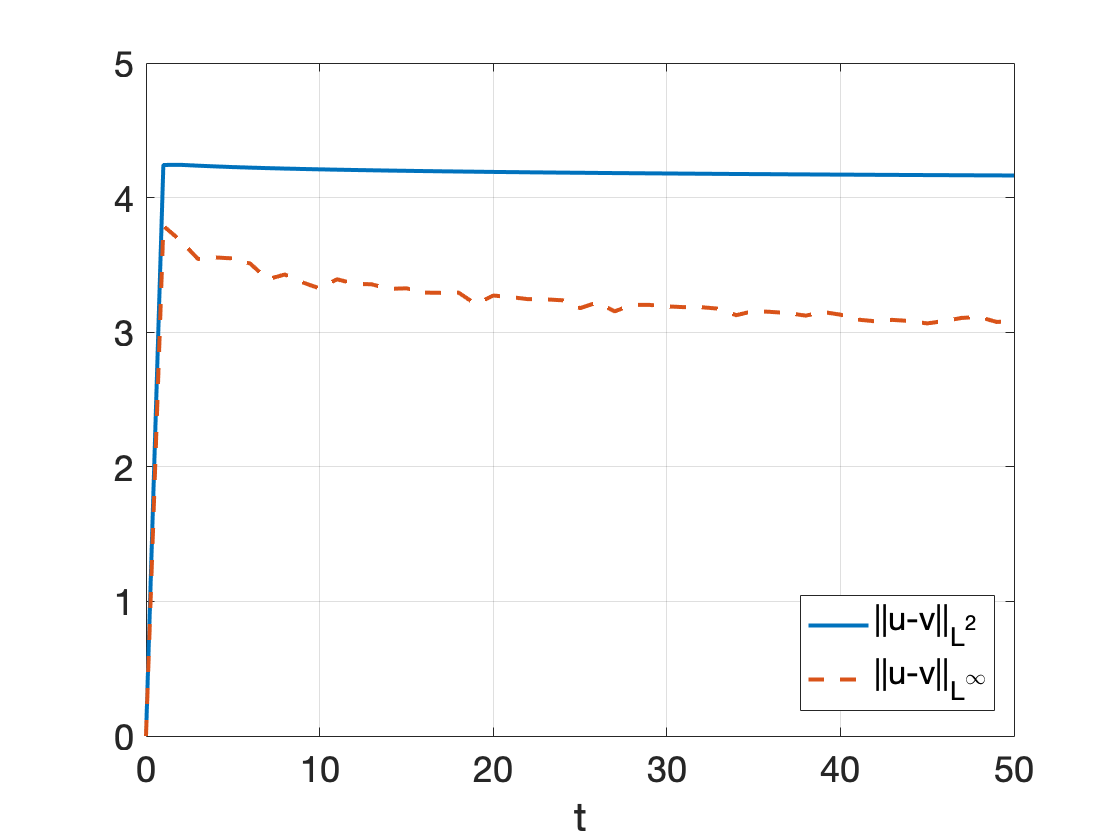}
\caption{\footnotesize  Time evolution for $u_0=v_0=A\,e^{-x^2}$. Top row: $\alpha=\frac{1}{9}$, $A=-6$.  Second row: $\alpha=\frac{7}{9}$, $A=-6$. Third row: $\alpha=1$, $A=-3$. Bottom row: $\alpha=3$, $A=-3$.}
\label{F:profile ENeg}
\end{figure}

In Figure \ref{F:profile ENeg},  snapshots of time evolution of the initial data $u_0=v_0=Ae^{-x^2}$ with $A<0$ is shown. For $\alpha = \frac19, \frac79$ we take $A=-6$ (large enough to allow formation of multiple solitons at least in GKdV) and for $\alpha=1, 3$ we take $A=-3$ to allow tracking of solitons and radiation in approximately the same window range. 

In all cases, the gKdV solutions start radiating to the left (solid blue), whereas the solutions to the GKdV evolve the negative bump into a (negative) soliton propagating to the right, and smaller in amplitude radiation outgoing to the left. Note, that the largest amplitude is negative, hence, if a second soliton forms, then it will have a positive amplitude. For  $\alpha = \frac19, \frac79$, we fit the largest bump with a rescaled (negative) soliton -$Q_c$ at $t=25$ (see the right graphs in the top two rows) and for $\alpha = 1, 3$ at $t=50$ (see the middle plots in the bottom two rows of Figure \ref{F:profile ENeg}). Observe also that the larger the power $\alpha$ is, the faster the separation of the soliton(s) from radiation occurs (e.g., compare the bottom two rows middle plots at $t=50$). The difference between the gKdV and GKdV solutions is more clearly seen in the right plots of the bottom two rows, where we plotted the difference $u(t)-v(t)$ in the $L^2_x$ norm (solid blue) and in the $L^{\infty}_x$ norm (dash red).

Similar to Figure \ref{F:profile EPos At}, we show the time dependence of the $L^\infty_x$ norms for negative data in Figure \ref{F:profile ENeg At}, tracking the quantities $\|u(t)\|_{L^\infty_x}$ (left), $\|v(t)\|_{L^\infty_x}$ (middle) and $\max (v(t) )$ (right) for the solutions shown in Figure \ref{F:profile ENeg}. 
When comparing the left and middle graphs in Figure \ref{F:profile ENeg At}, one notices that in the middle graph the norms $\|v(t)\|_{L^\infty_x}$ converge to horizontal asymptotes, which are the same as in the right graph of Figure \ref{F:profile EPos At}, indicating that in GKdV the formation of solitons is not influenced by the sign of the initial data, and the  solution $v(x,t)$ approaches to either a positive or negative soliton of the same magnitude. More interestingly, by tracking in time the maximum spatial value of $v(x,t)$ as in the right plot of Figure \ref{F:profile ENeg At}, we find that the second positive soliton forms 
because of the convergence to a horizontal asymptote (for $\alpha=\frac19$ a longer computational time is needed; we also tested $\alpha = \frac59$, not included in Figures \ref{F:profile ENeg}, \ref{F:profile ENeg At}, and observed the formation of the second soliton). 
\begin{figure}[ht]
\includegraphics[width=0.32\textwidth]{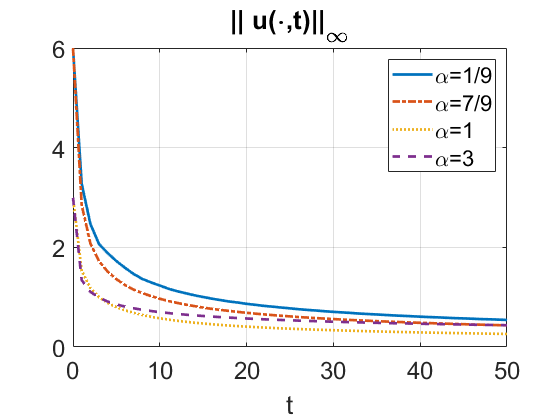}
\includegraphics[width=0.32\textwidth]{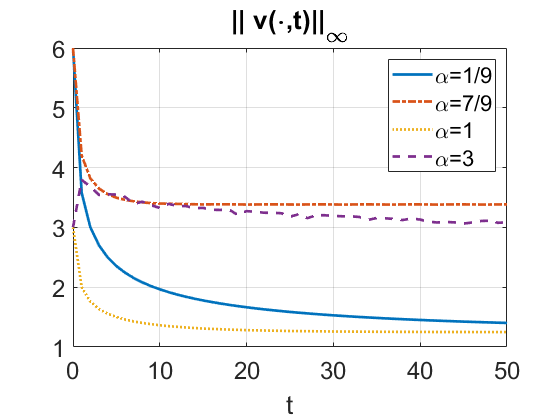}
\includegraphics[width=0.32\textwidth]{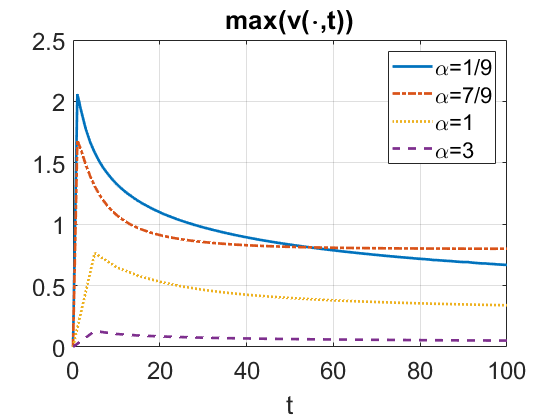}
\caption{\footnotesize  Time dependence of the $L^\infty_x$ norm of solutions to gKdV (left) vs. GKdV (middle), and of $\max (v(\cdot,t))$(right) 
for different $\alpha$ from the negative Gaussian data 
as in Figure \ref{F:profile ENeg}. }
\label{F:profile ENeg At}
\end{figure}

On the other hand, $\|u(t)\|_{L^\infty_x}$ keeps decreasing in time (left plot in Figure \ref{F:profile ENeg At}, indicating that the solution might be dispersing (or scattering), and we investigate this further. For that, take a closer look at the top right plot in Figure \ref{F:profile ENeg} solid blue line. Notice that the blue curve oscillates forming a pulse-like shape for its envelope, which decays to the left, but to the very right of it there is a separate positive bump (about $0.5$ in height), which is not present in the blue curve in the previous (middle) plot (at $t=10$).  We zoom-in and track this part of the solution in a bit more detail, see snapshots of that region for times up to $t=100$ in Figure \ref{F:profile Ngau 19}. 
\begin{figure}[ht]
\includegraphics[width=0.32\textwidth]{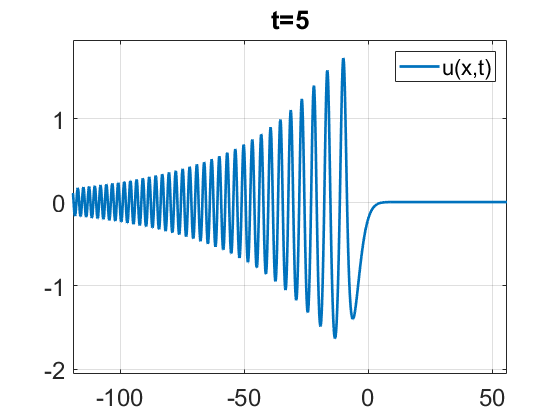}
\includegraphics[width=0.32\textwidth]{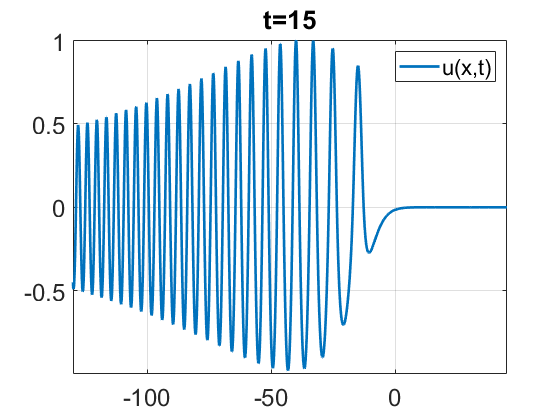}
\includegraphics[width=0.32\textwidth]{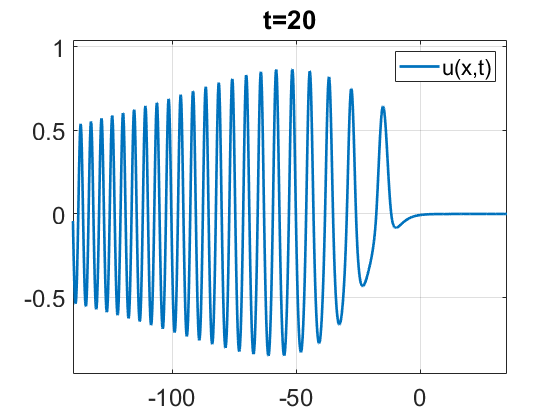}
\includegraphics[width=0.32\textwidth]{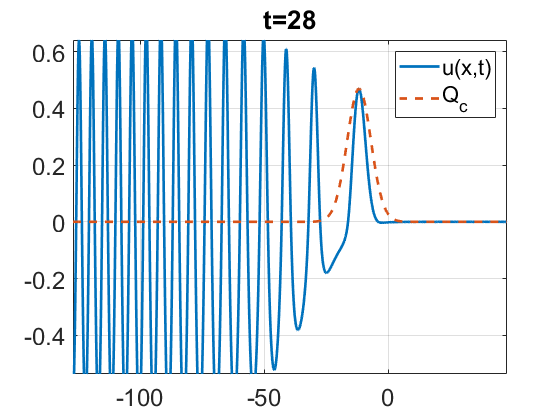}
\includegraphics[width=0.32\textwidth]{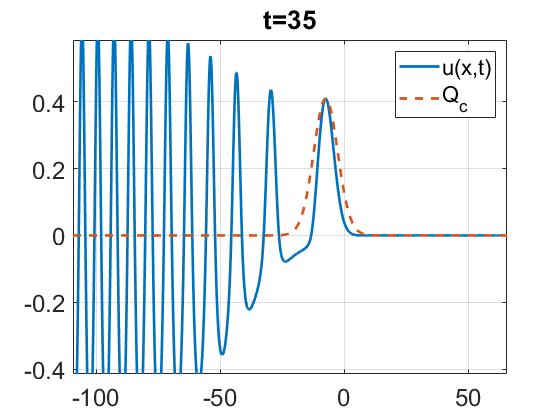}
\includegraphics[width=0.32\textwidth]{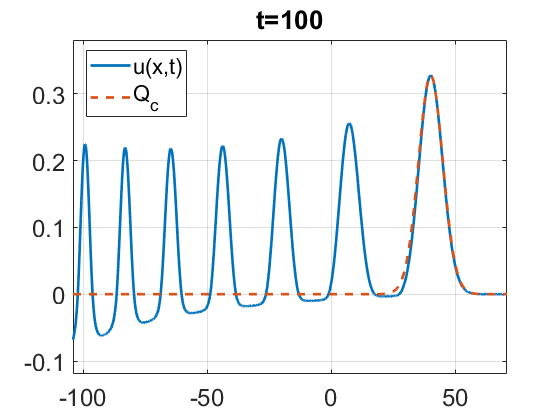}
\caption{\footnotesize  The gKdV time evolution for $u_0= -6\,e^{-x^2}$, $\alpha=\frac19$, till $t = 100$. }
\label{F:profile Ngau 19}
\end{figure}

Tracking the time evolution for $u_0 = -6 \,e^{-x^2}$ in Figure \ref{F:profile Ngau 19}, one can observe that the first negative bump in the radiation decreases in its magnitude (becomes smaller, see the top row) and then eventually disappears (it is almost absent at $t=20$). Then the next positive bump starts to separate from the pulse-like radiation, and because it is positive (in gKdV model), it starts forming a soliton, or asymptotically approaches a rescaled version of it; that soliton can be seen in the bottom row, where we fit it with the rescaled and shifted soliton $Q_c$ (dash red). 
Furthermore, note that after the first soliton starts forming and separating, the negative bump between it and the next positive bump is decreasing in magnitude (see the left and middle plots in the bottom row of Figure \ref{F:profile Ngau 19}) and eventually disappears, allowing the next positive bump form into a soliton (this can be seen in the right plot at $t=100$). In fact, this continues, and more and more smaller solitons will emerge from the radiative pulse-like part of the solution.  

A natural question then is, would the same phenomena happen for other powers $\alpha$? (at least when $\alpha$ is odd?) 
We tried other powers for this data and simulated the solutions till $t=500$. While we cannot see this phenomenon for $\alpha=\frac{7}{9}$ (need much longer computational time), we observe a similar emergence of solitons for $\alpha=\frac{1}{3}$ and $\alpha=\frac{5}{9}$. 
\begin{figure}[ht]
\includegraphics[width=0.32\textwidth]{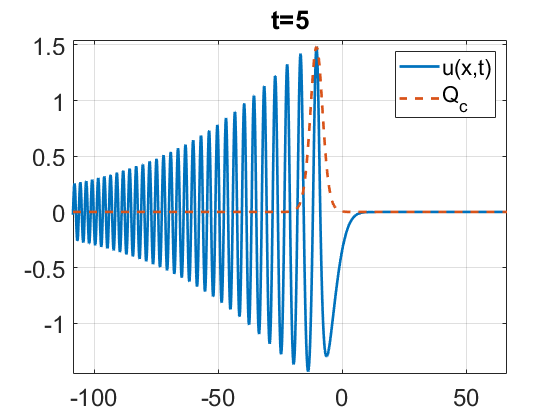}
\includegraphics[width=0.32\textwidth]{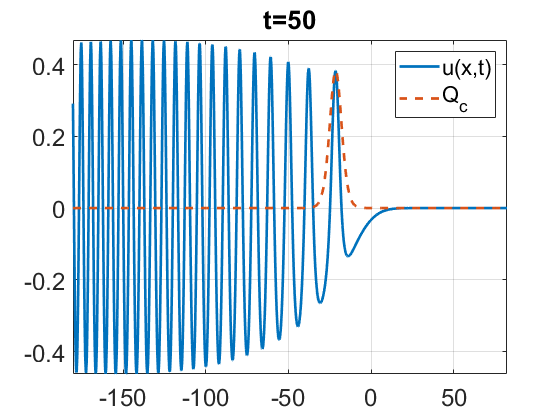}
\includegraphics[width=0.32\textwidth]{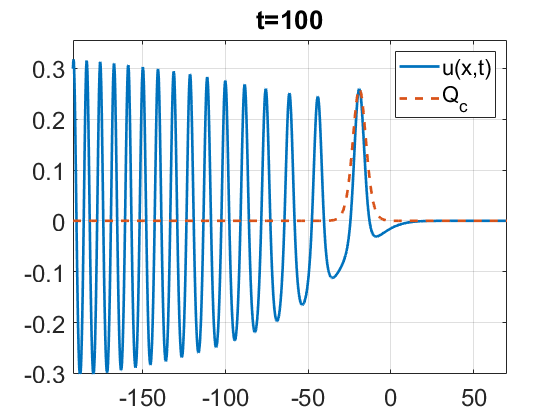}
\includegraphics[width=0.32\textwidth]{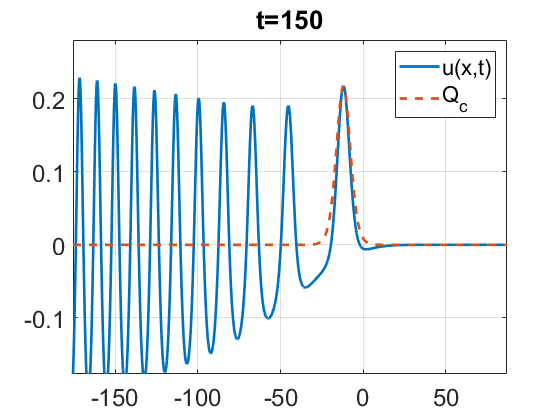}
\includegraphics[width=0.32\textwidth]{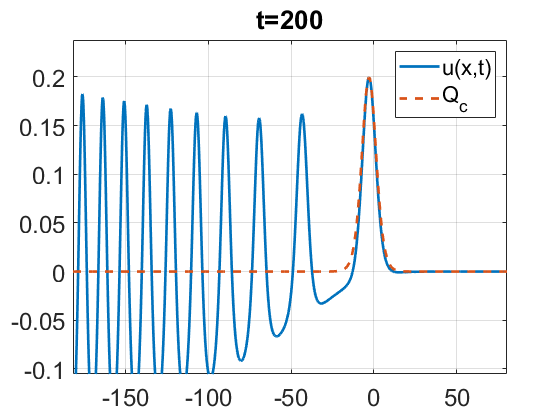}
\includegraphics[width=0.32\textwidth]{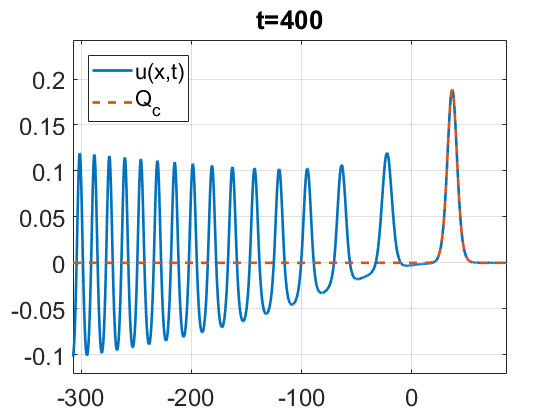}
\caption{\footnotesize  The gKdV time evolution for $u_0= -6\,e^{-x^2}$, $\alpha=\frac59$, till $t=400$.}
\label{F:profile Ngau 59}
\end{figure}
Figure \ref{F:profile Ngau 59} shows the time evolution for $\alpha=\frac{5}{9}$ case. Besides the formation of solitons, we note that this case takes a significantly longer time to form the first soliton. Thus, 
the larger power $\alpha$  is, the longer it takes to form such solitons, which explains the reason why we could not observe the emergence of a soliton in the case $\alpha=\frac{7}{9}$ or $1$ (and hence, larger powers).

\subsubsection{Super-Gaussian initial data}
Now, we examine the fractional $\alpha$ values with the super-Gaussian initial data of the form
\begin{equation}\label{E:super-G1}
u_0=A\,e^{-x^4},
\end{equation}
where the decay is faster than that of a soliton or Gaussian data. \smallskip

In Figure \ref{F:profile SGPos}, snapshots of the time evolution for positive amplitude, $A=6$, are shown for both cases of $\alpha = \frac19$ (top row) and $\alpha=\frac79$ (bottom row). 
\begin{figure}[ht]
\includegraphics[width=0.32\textwidth]{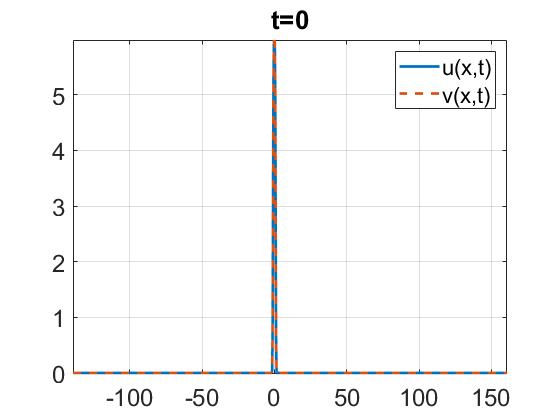}
\includegraphics[width=0.32\textwidth]{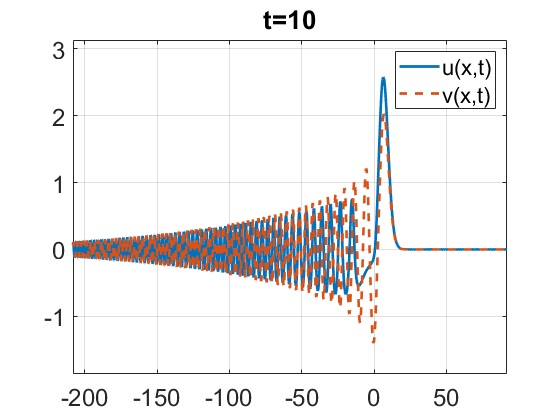}
\includegraphics[width=0.32\textwidth]{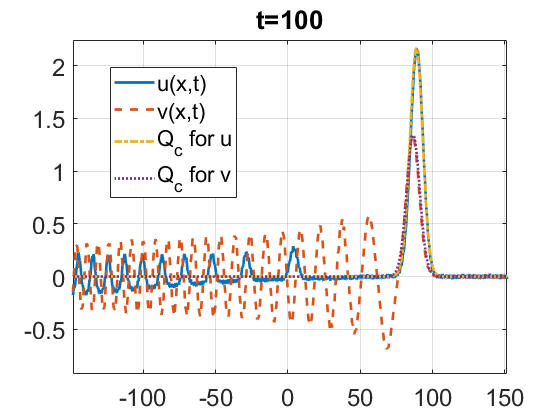}
\includegraphics[width=0.32\textwidth]{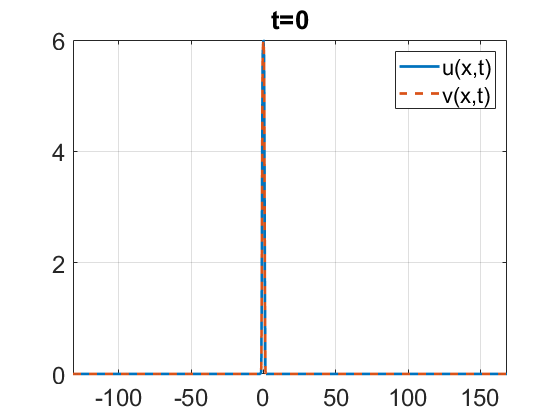}
\includegraphics[width=0.32\textwidth]{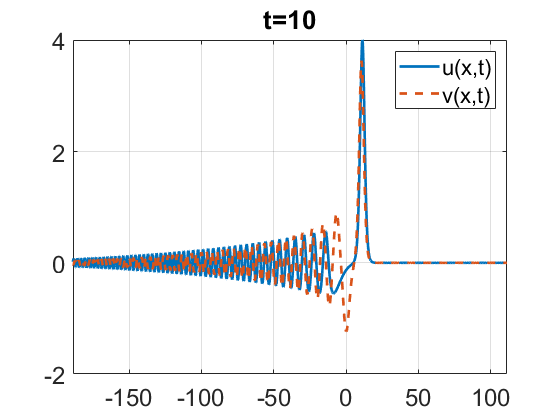}
\includegraphics[width=0.32\textwidth]{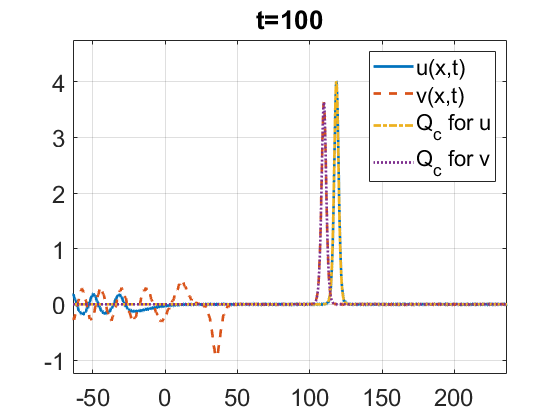}
\caption{\footnotesize  Time evolution for $u_0=v_0=A\,e^{-x^4}$, $A=6$; 
$\alpha=\tfrac19$ (top row), $\alpha=\tfrac79$ (bottom row).}
\label{F:profile SGPos}
\end{figure}

Solitons are formed in solutions to both equations,  $u(x,t)$ and $v(x,t)$. 
Observe that with a positive initial condition, the soliton, formed in $u(x,t)$, is higher, and therefore, moves to the right faster than the one formed in $v(x,t)$. A separation between $u(x,t)$ and $v(x,t)$ is more evident for larger powers of $\alpha$ (e.g., in the bottom row of Figure \ref{F:profile SGPos}). Notice also a train of solitons that forms in the solution $u(x,t)$ (solid blue in the top right plot). Multiple solitons, with alternating signs, form in the solution $v(x,t)$ (dash red in the bottom row). In the right column, when $t=100$, we fit the largest bumps with the rescaled solitons $Q_c$ to check the profiles. 
\smallskip

\begin{figure}[ht]
\includegraphics[width=0.4\textwidth]{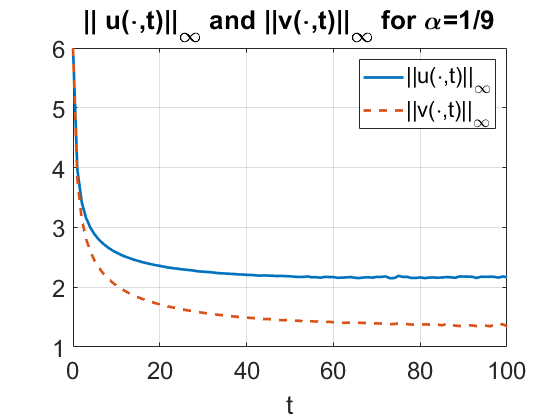}
\includegraphics[width=0.4\textwidth]{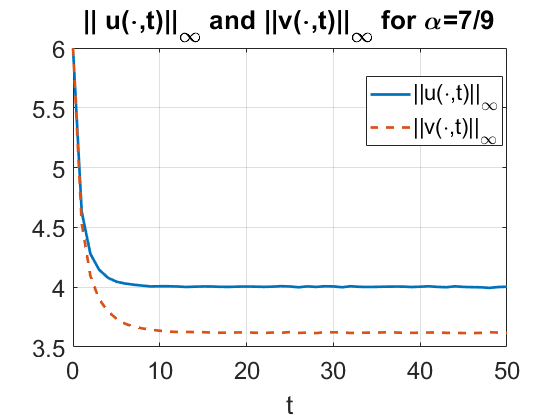}
\caption{\footnotesize  Time dependence of the $L^\infty_x$ norm of solutions to gKdV vs. GKdV for different $\alpha$ for the positive super-Gaussian data as in Figure \ref{F:profile SGPos}.}
\label{F:profile SGPos At}
\end{figure}

To confirm the convergence to solitons, in Figure \ref{F:profile SGPos At} we show the time dependence of the $L^\infty_x$ norm of $u(x,t)$ and $v(x,t)$, which is the height of the first, or the largest, bump. As the sup norm, or the height, of each solution converges to a horizontal asymptote, it indicates the asymptotic convergence to the soliton. Similar to previous cases, the gKdV solutions form higher solitons, and thus faster moving, compared to the GKdV solutions. \\


\newpage

Next, we show the case of the negative super-Gaussian data $u_0=A\, e^{-x^4}$ with $A<0$. Figure \ref{F:profile SGNeg} shows snapshots of the time evolution with $A=-6$ for $\alpha= \frac{1}{9}$ (top row) and $\frac{7}{9}$ (bottom row). 
\begin{figure}[ht]
\includegraphics[width=0.32\textwidth]{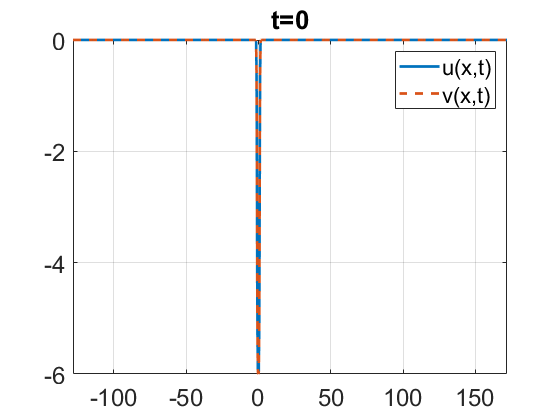}
\includegraphics[width=0.32\textwidth]{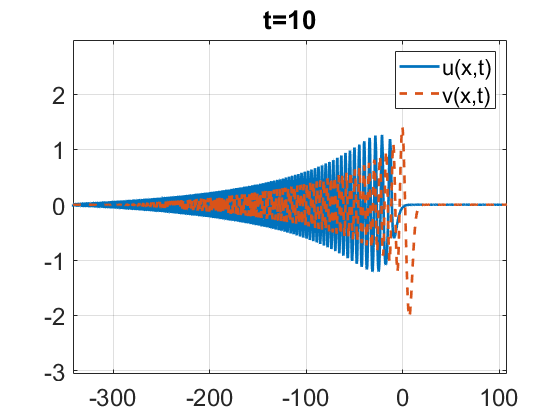}
\includegraphics[width=0.32\textwidth]{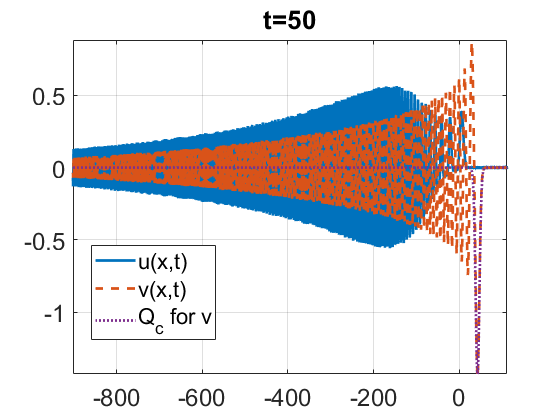}
\includegraphics[width=0.32\textwidth]{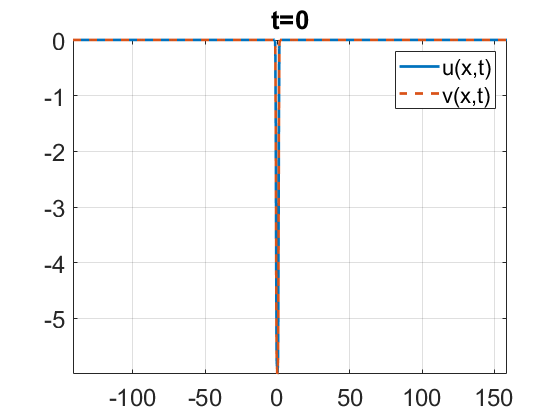}
\includegraphics[width=0.32\textwidth]{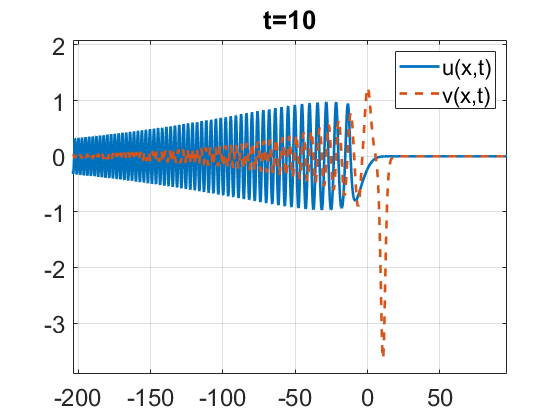}
\includegraphics[width=0.32\textwidth]{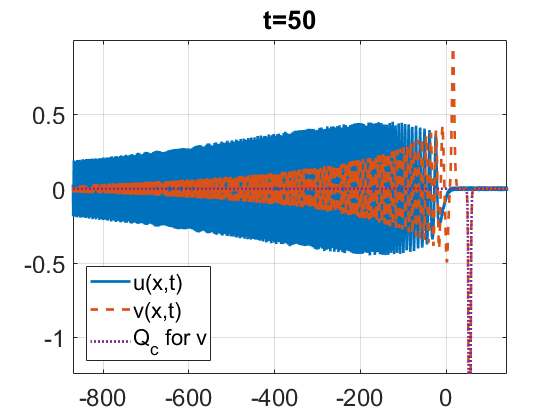}
\caption{\footnotesize  Time evolution for $u_0=v_0=Ae^{-x^4}$, $A=-6$; $\alpha=\frac19$ (top row), $\alpha=\frac79$ (bottom row).}
\label{F:profile SGNeg}
\end{figure}

The GKdV solution $v(x,t)$ forms several solitons (red dash line), symmetric to the positive case, while the gKdV solution $u(x,t)$ starts dispersing to the left forming a decaying oscillatory tail. A very close examination of the top right plot gives an indication of an emerging soliton, and possibly not just one (solid blue line). 
In the bottom middle and right subplots of Figure \ref{F:profile SGNeg}, one can notice that after the main negative soliton is formed (with the largest amplitude), a second positive soliton is also visible, and even a third (negative) one is forming as well. Hence, again, in the GKdV model the solitons from negative data alternate the sign. In the right column, the fitting of the largest bump is with the rescaled and shifted soliton, confirming the shape.    \\

\begin{figure}[ht]
\includegraphics[width=0.4\textwidth]{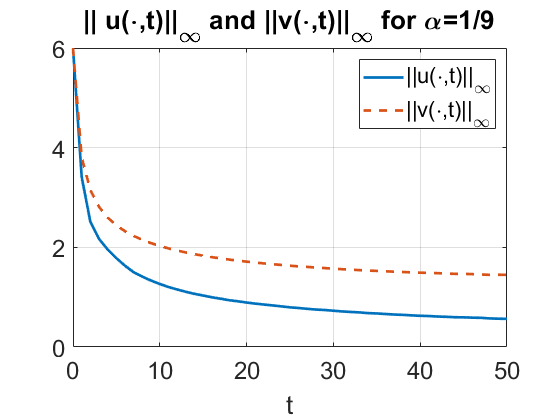}
\includegraphics[width=0.4\textwidth]{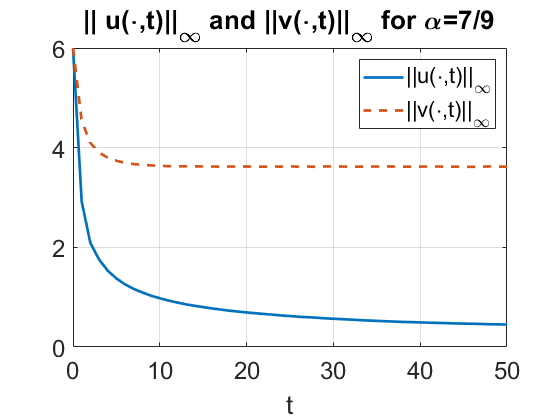}
\caption{\footnotesize  Time dependence of the $L^\infty_x$ norm of solutions to gKdV vs. GKdV for different $\alpha$ from the negative super-Gaussian data 
as in Figure \ref{F:profile SGNeg}. }
\label{F:profile SGNeg At}
\end{figure}

We show the time dependence of the sup norm for the gKdV and GKdV solutions for different powers of $\alpha$ in Figure \ref{F:profile SGNeg At}. 
We note that $\|u(t)\|_{L^\infty_x}$ keeps decreasing in both cases of $\alpha$, which partially justifies our observation that $u(x,t)$ goes into the oscillatory radiation in the observable time. Eventually, it will level to a horizontal asymptote, since the solitons will start emerging from the radiation, however, note that the height of the emerging solitons is smaller than the height of the oscillatory radiation part (during the simulation time). On the other hand, we can see $\|v(t)\|_{L^\infty_x}$ converges to a horizontal asymptote, indicating the formation of the soliton. (We also checked that $\max(v(t))$ converges to a horizontal asymptote for $\alpha=\frac{1}{9}$ and $\frac{7}{9}$, indicating the formation of a second positive soliton, but we omit the plot here.)

\subsubsection{Initial data of polynomial decay}
We next study the time evolution of the polynomially decaying data with as slow decay as $1/|x|$. Note that such data would not satisfy the Fadeev condition, and thus, the IST (inverse scattering transform) is not applicable to investigate the soliton resolution even in the standard KdV case ($\alpha=1$ in \eqref{gKdV}). Nevertheless, such data is in $L^2$, and thus, the dispersive theory (as described in the introduction) guarantees the local and global well-posedness in that case. Our main theorem gives the local well-posedness for such data (for any $\alpha>0$), and even for the data of slower decay in some cases. 
\medskip

We consider 
\begin{equation}\label{E:A-poly}
u_0(x)=\frac{A}{\sqrt{1+x^2}}, \quad A \in \mathbb R \setminus \{0\}.
\end{equation}
(We could consider higher polynomial decay as well, but this is the slowest rate of decay we are able to handle by numerics by using rational basis functions, see Appendix or \cite{RWY2021}, \cite{RRY2021}.)
We take the computational domain $[-L,L]$ large enough ($L=1600\pi$) to approximate the actual solution on $\mathbb{R}$. We point out that taking the larger values of $L$ leads to similar numerical results. Therefore, even though the domain truncation strategy may lead to larger error in the original model, the numerical results we show are
quite close to the true behavior of solutions. 
\medskip

We start with the positive amplitude ($A=1$ or $1.5$). From top to bottom, Figure \ref{F:profile PRLforPOS} shows the time evolution for $\alpha=\frac{1}{9}$, 
$\frac{7}{9}$, $1$, and $3$, for times $t=50, 100, 200$. 

\begin{figure}[ht]
\includegraphics[width=0.32\textwidth]{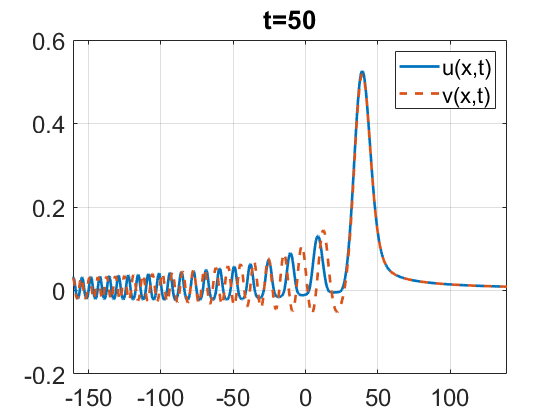}
\includegraphics[width=0.32\textwidth]{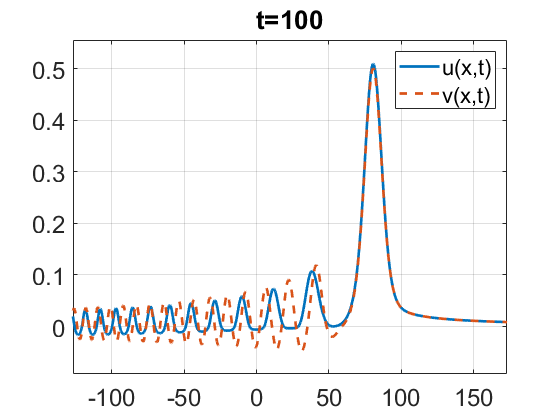}
\includegraphics[width=0.32\textwidth]{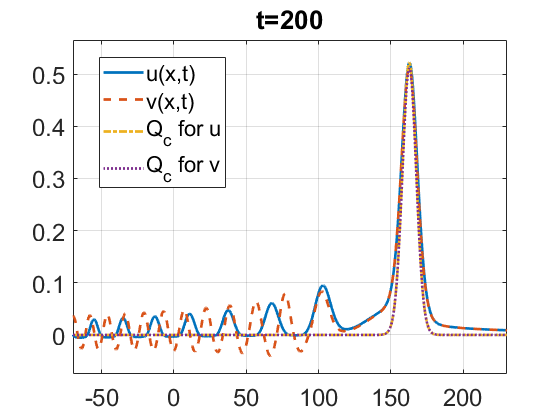}
\includegraphics[width=0.32\textwidth]{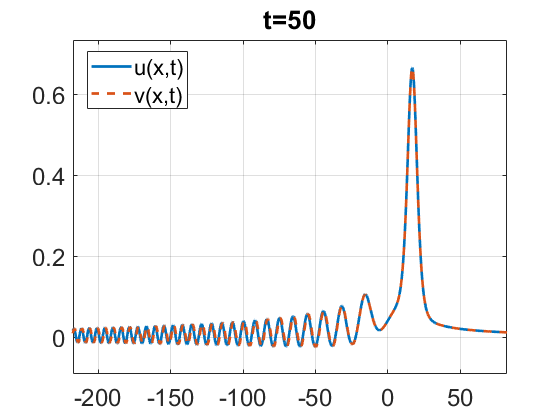}
\includegraphics[width=0.32\textwidth]{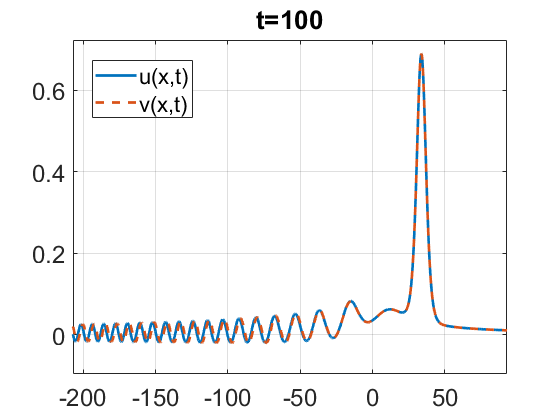}
\includegraphics[width=0.32\textwidth]{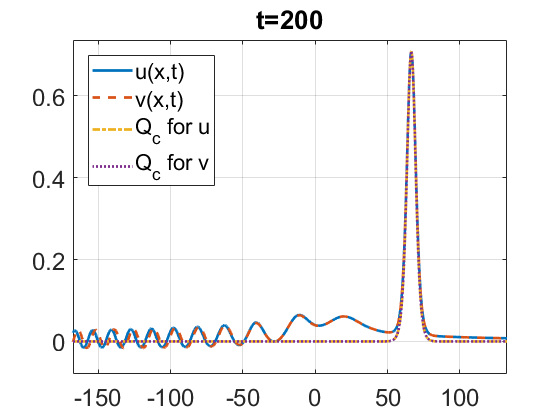}
\includegraphics[width=0.32\textwidth]{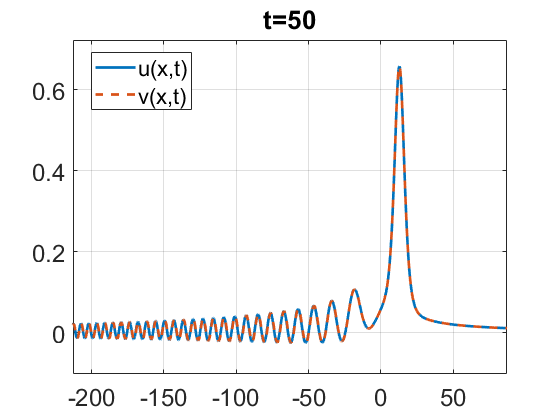}
\includegraphics[width=0.32\textwidth]{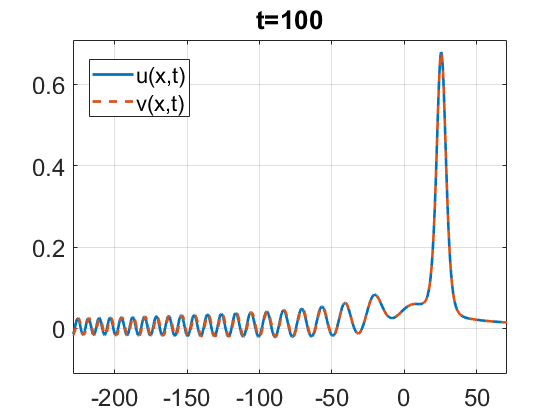}
\includegraphics[width=0.32\textwidth]{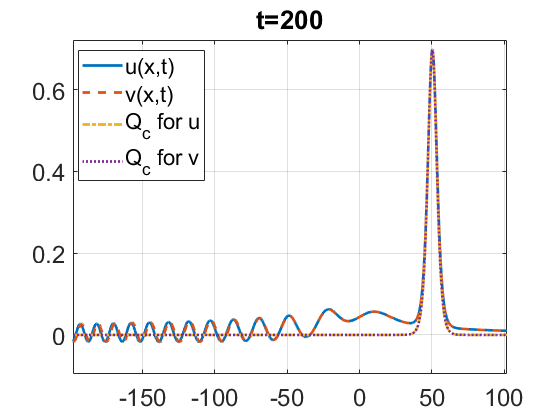}
\includegraphics[width=0.32\textwidth]{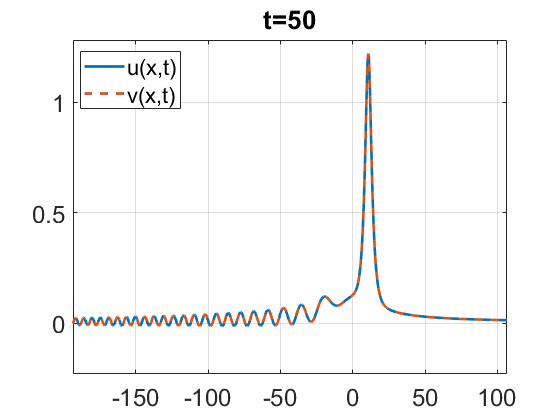}
\includegraphics[width=0.32\textwidth]{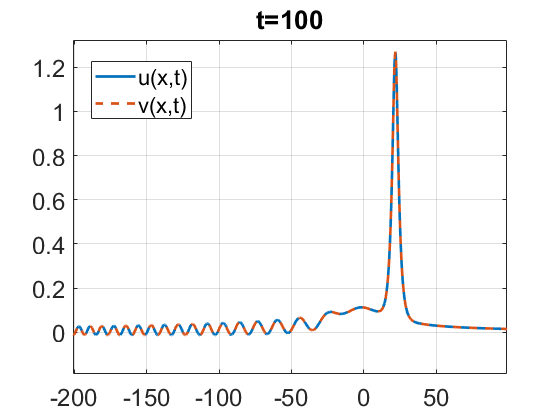}
\includegraphics[width=0.32\textwidth]{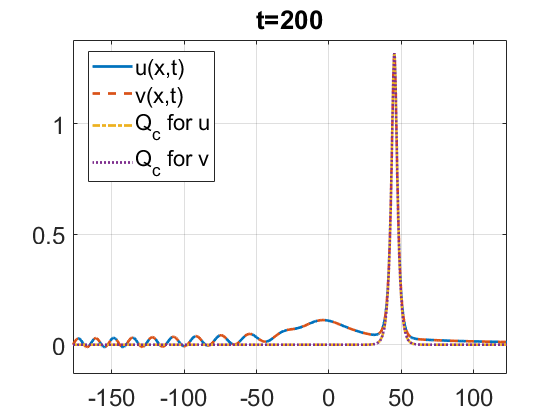}
\includegraphics[width=0.35\textwidth, height=3.5cm]{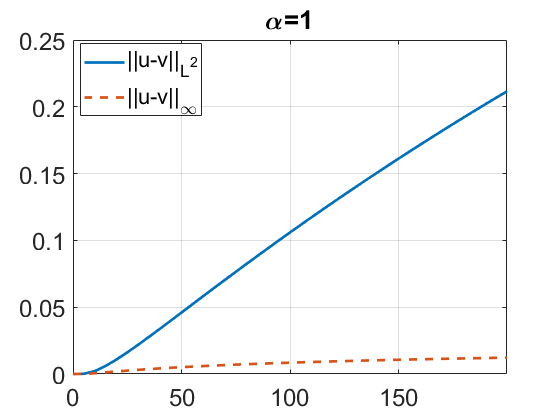}
\includegraphics[width=0.35\textwidth, height=3.5cm]{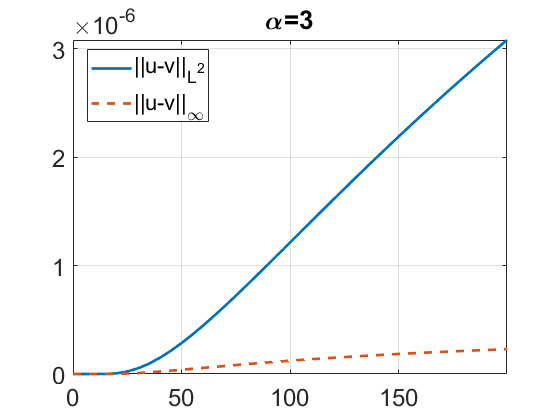}
\caption{\footnotesize  Time evolution of  $u_0=v_0=\frac{A}{\sqrt{1+x^2}}$ at $t=50, 100, 200$. 
Top row: $\alpha=\frac{1}{9}$, $A=1$; second row: $\alpha=\frac{7}{9}$, $A=1$; third row: $\alpha=1$, $A=1$; fourth row: $\alpha=3$, $A=1.5$.
Bottom row: the $L^\infty_x$ and $L^2_x$ norms difference of $u$ and $v$ for $\alpha=1$ and $\alpha=3$.}
\label{F:profile PRLforPOS}
\end{figure}

In both models solitons are forming, traveling to the right together with the radiation outgoing to the left for all $\alpha>0$, similar to  the fast (exponentially) decaying initial data. 
Note that a train of solitons is being formed in the top row ($\alpha=\frac19$) by the gKdV propagation (solid blue line in the middle and right plots), which is also the case for the solutions with Gaussian and super-Gaussian data. 
Furthermore, the slower decay of the initial condition is, the slower is the asymptotic convergence of the solutions to a soliton; the largest bump is starting to fit the rescaled soliton (see the fitting at $t=200$ in the right column of Figure \ref{F:profile PRLforPOS}), though there is still much more mass on the right of the soliton to transfer into the radiation on the left before getting close to the exponential decay (as in $Q$), this can be seen in the decay of the tail on the right of the largest bump when fitting to the rescaled $Q_c$ in the right column plots. 
Another point worth mentioning is that the radiation part seems to have higher positive values than negative in both $u$ and $v$ solutions. As the radiation initially starts escaping to the left, it does so via some positive oscillations (see, for instance, the middle and right plots in the second, third and fourth rows of Figure \ref{F:profile PRLforPOS}), after which it will then oscillate around the $x$-axis (still with more positive part).  
The bottom row of Figure \ref{F:profile PRLforPOS} shows the difference of solutions in the cases $\alpha=1, 3$ in the $L^\infty_x$ and $L^2_x$ norms, note the scale in the right plot of the order of $10^{-6}$, thus, the difference in that case is insignificant.  

\begin{figure}[ht]
\includegraphics[width=0.4\textwidth]{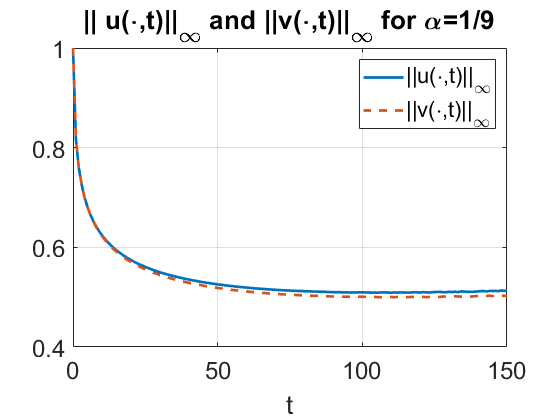}
\includegraphics[width=0.4\textwidth]{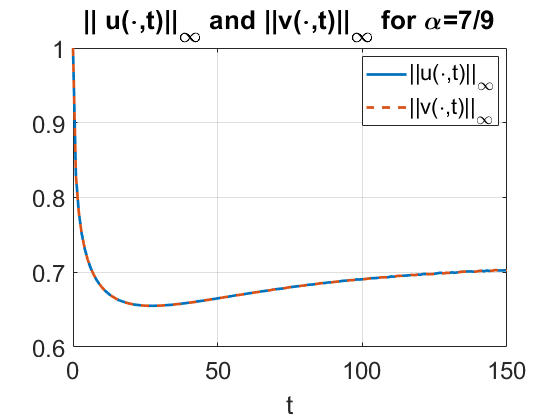}
\caption{\footnotesize Time dependence of the $L^\infty_x$ norm of solutions to gKdV vs. GKdV for different $\alpha$ from the initial data $u_0=v_0=1 / \sqrt{1+x^2} $. }
\label{F:profile x1 At}
\end{figure}
Furthermore, 
in Figure \ref{F:profile x1 At}, a comparison of the $L^\infty_x$ norms for both $u$ and $v$ solutions is given for $\alpha=\frac19$ (left) and $\alpha=\frac79$ (right), showing almost no difference in the height of the (largest) solitons.  \\

We now check the case of negative initial data ($A<0$). We take $A=-2$ in \eqref{E:A-poly}
and show the corresponding solutions in Figure \ref{F:profile PRLforNEG}.
For $\alpha=\frac19$ we see a pulse-like radiation, which has a similar decreasing first negative bump (see the top row), we investigate it further and run the simulations till $t=500$, see Figure \ref{F:profile PRLforNEG19}, where the first positive soliton has almost started forming, and more positive solitons are coming. A similar behavior should be happening in other cases of $\alpha$, but to observe that it requires a much longer computational time. The GKdV solutions (dash red in Figure \ref{F:profile PRLforNEG}) form a negative soliton and 
a second negative soliton is starting to form as well, see the middle and last plots in each row. In the right column we do fittings for the rescaled soliton shape, which gives a very good matching. \\

\begin{figure}[ht]
\includegraphics[width=0.32\textwidth]{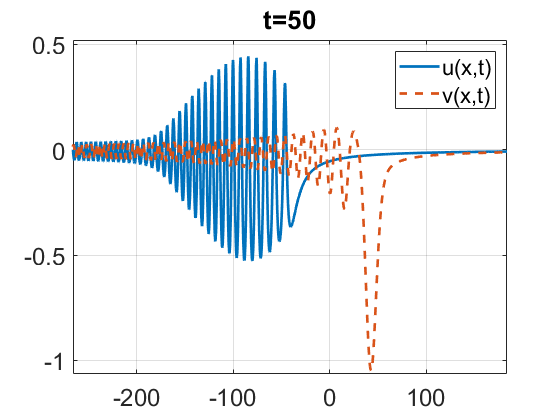}
\includegraphics[width=0.32\textwidth]{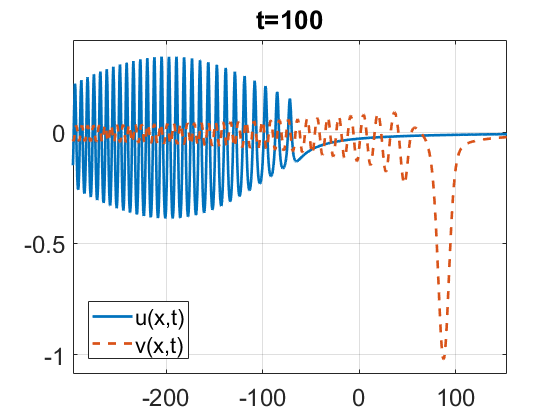}
\includegraphics[width=0.32\textwidth]{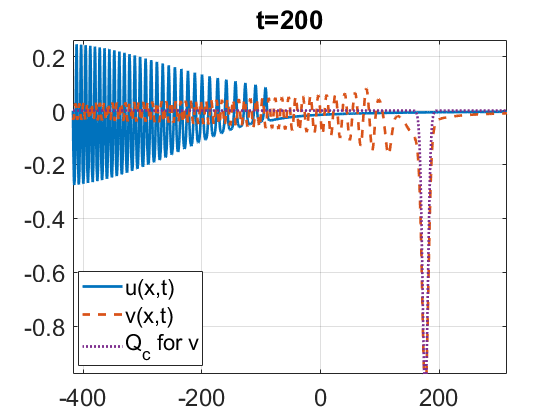}
\includegraphics[width=0.32\textwidth]{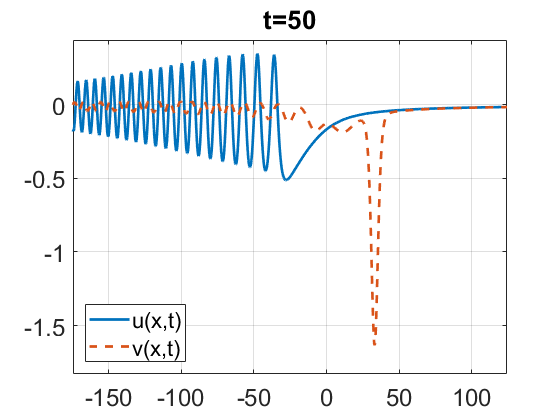}
\includegraphics[width=0.32\textwidth]{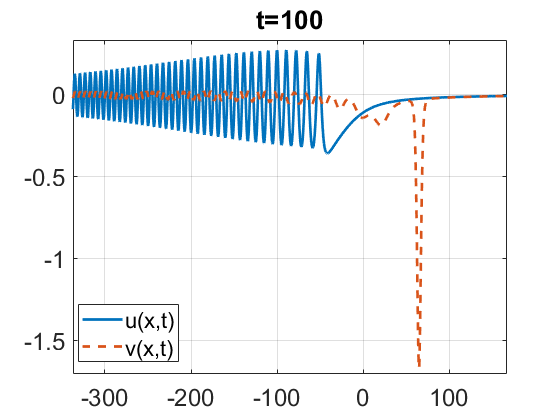}
\includegraphics[width=0.32\textwidth]{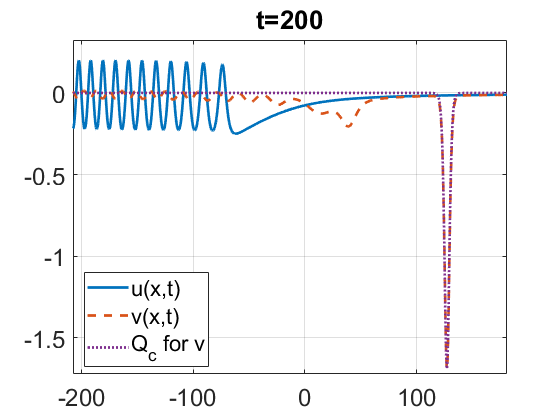}
\includegraphics[width=0.32\textwidth]{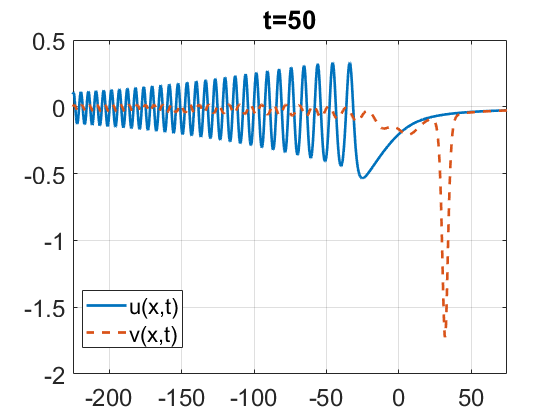}
\includegraphics[width=0.32\textwidth]{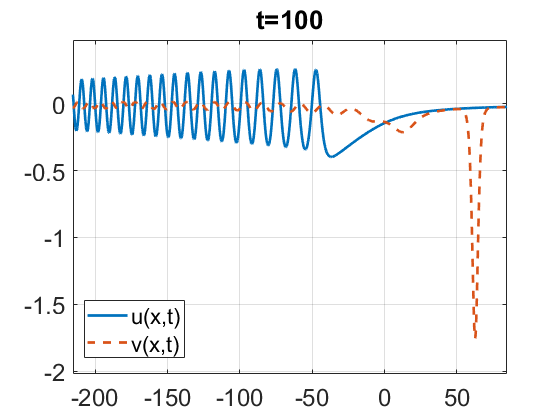}
\includegraphics[width=0.32\textwidth]{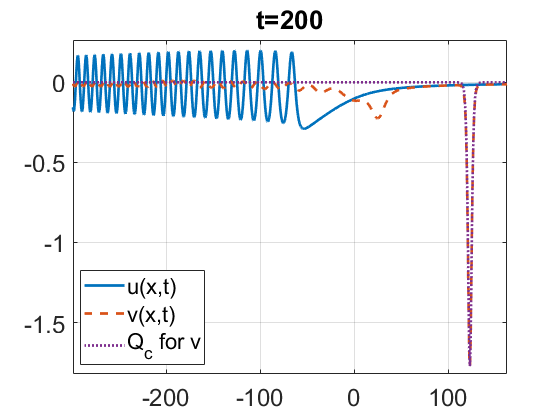}
\includegraphics[width=0.32\textwidth]{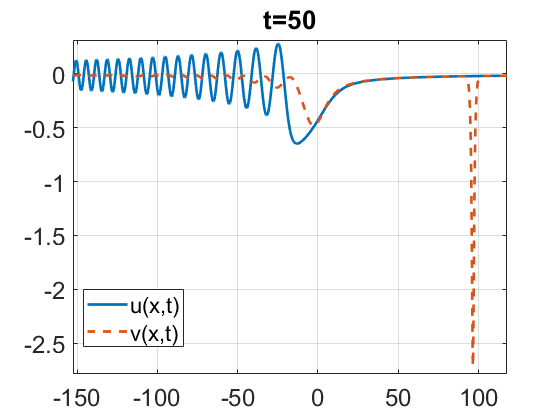}
\includegraphics[width=0.32\textwidth]{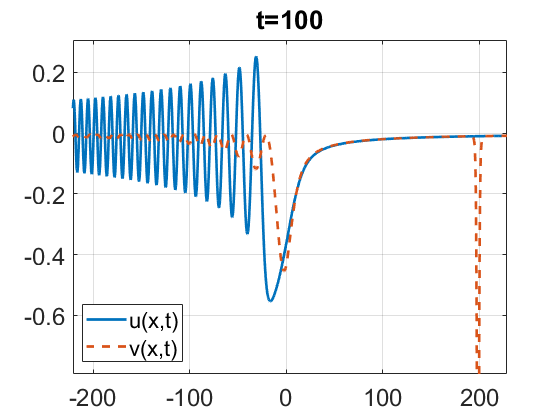}
\includegraphics[width=0.32\textwidth]{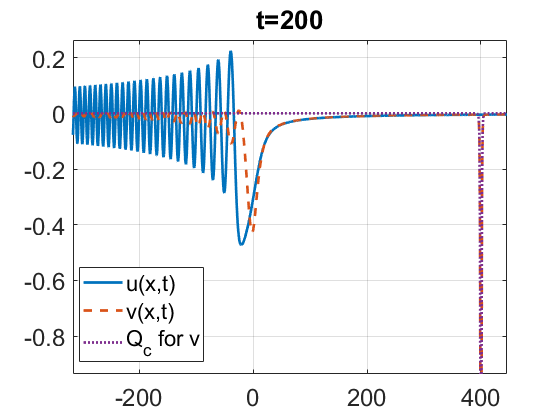}
\caption{\footnotesize  Time evolution of $u_0=v_0=\frac{A}{\sqrt{1+x^2}}$ with $A=-2$. 
Top row: $\alpha=\frac{1}{9}$, second row $\alpha=\frac{7}{9}$, third row: $\alpha=1$, bottom row: $\alpha=3$.
}
\label{F:profile PRLforNEG}
\end{figure}

\begin{figure}[ht]
\includegraphics[width=0.6\textwidth, height=4.6cm]{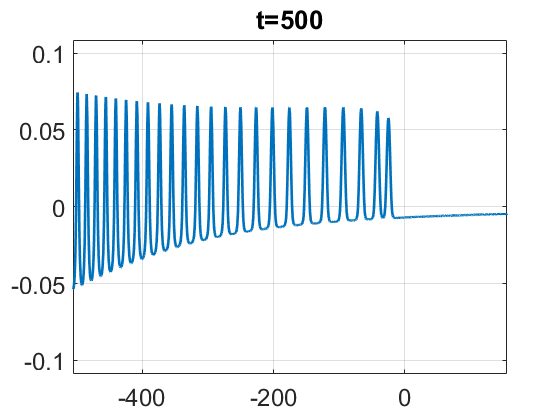}
\caption{\footnotesize Emergence of positive solitons from radiation in gKdV solution $u(x,t)$ in the case $\alpha = \frac19$ at $t=500$ from the initial data as in Figure \ref{F:profile PRLforNEG}. }
\label{F:profile PRLforNEG19}
\end{figure}

\newpage

Summarizing the results in the section, we mention that regardless of initial data decay, the GKdV solutions always generate solitons symmetric with respect to the sign of the initial data and radiation outgoing to the left. The gKdV equation generates solitons in the same manner from the positive data and we have also observed that solitons emerge later from the radiation part, thus, negative data can also produce positive solitons.


\newpage

\section{Multi-bump data}\label{S:4}

We conclude this paper with showing the interaction of various types of two bumps for small powers of $\alpha$ ($\alpha=\frac19$ and $\frac79$). 

\subsection{Interaction of two solitons}\label{S:4.1}
We start with considering the interaction between two solitons, i.e., the two-soliton initial data such as 
\begin{equation}\label{E:multidata-1}
u_0= v_0 = A \, Q_{c_1}(x+a_1) + B\, Q_{c_2}(x+a_2), 
\end{equation}
where we consider $A$ and $B$ of the same and different signs, as well as different shifts $a_1, a_2$.
\begin{figure}[ht]
\includegraphics[width=0.32\textwidth]{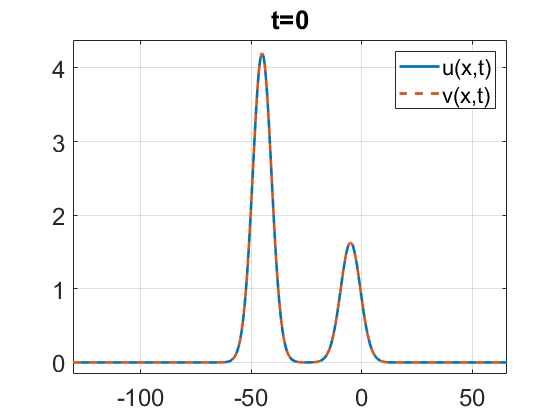}
\includegraphics[width=0.32\textwidth]{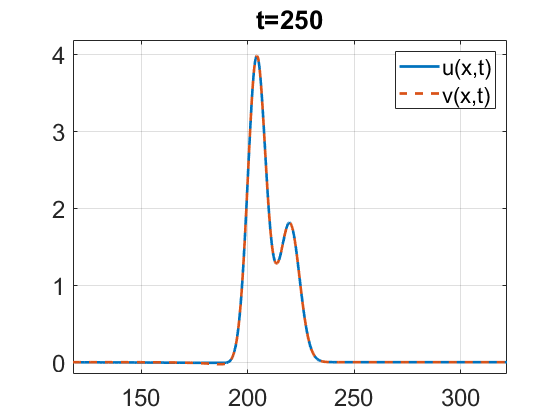}
\includegraphics[width=0.32\textwidth]{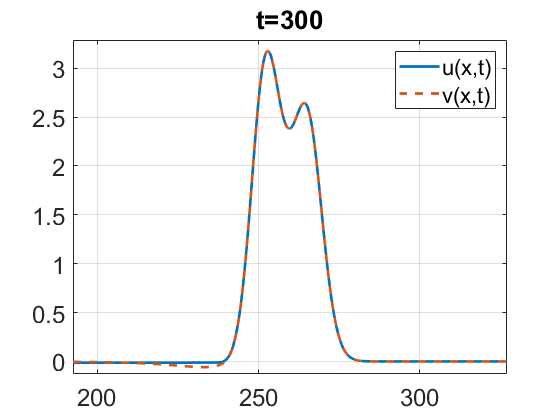}
\includegraphics[width=0.32\textwidth]{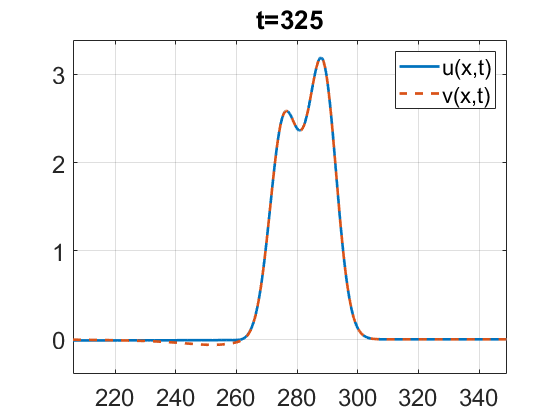}
\includegraphics[width=0.32\textwidth]{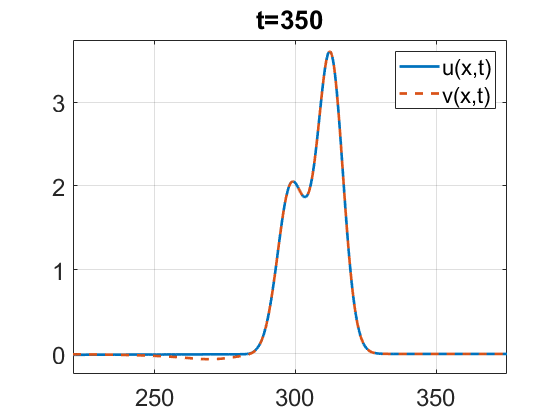}
\includegraphics[width=0.32\textwidth]{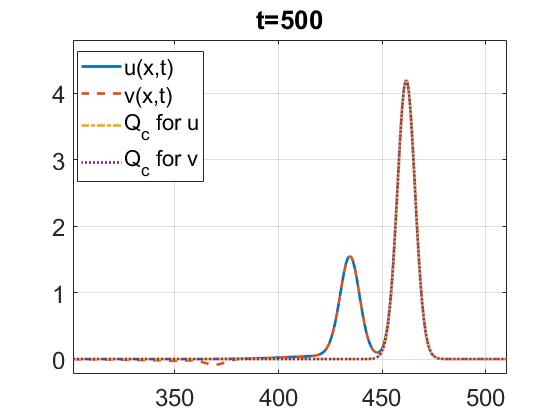}
\caption{\footnotesize  Time evolution for $\alpha=\frac{1}{9}$, $u_0=v_0=Q(x+45)+ Q_{0.9}(x+5)$. } 
\label{F:profile PQPQ p19}
\end{figure}
Figure \ref{F:profile PQPQ p19} illustrates the interaction of two solitons in the case $\alpha=\frac{1}{9}$. Here, we take a larger (and thus, faster) soliton on the left and a smaller one on the right, to be precise, the initial condition is  $u_0=v_0= Q(x+45)+  Q_{0.9}(x+5)$, that is, the data \eqref{E:multidata-1} with $A=B=1$, $c_1=1$, and $c_2=0.9$ (recall that  $Q_c(x) = c^{\,9}Q(\sqrt c \, x)$ when $\alpha=\frac19$ and that is why the second soliton $Q_{0.9}$ is much smaller in amplitude than $Q$ in Figure \ref{F:profile PQPQ p19}). We run our simulations till $t=500$ to observe the interaction. 
After the solitons collide and separate, the solution evolves into basically the same 
original size and shape, see the fitting of the larger bump with the shifted $Q$ in the bottom right plot of Figure \ref{F:profile PQPQ p19}. Furthermore, $u(x,t)$ and $v(x,t)$ practically coincide with each other; there is a tiny amount of radiation outgoing to the left in $v(x,t)$, which is more noticeable after the collision. To track the radiation, we plot the $L^2_x$ and $L^\infty_x$ differences in the solutions on the left of Figure \ref{F:profile PQPQ79diff} (see also Figure \ref{F:profile PQPQ79min} discussed below).  
\begin{figure}[ht]
\includegraphics[width=0.32\textwidth]{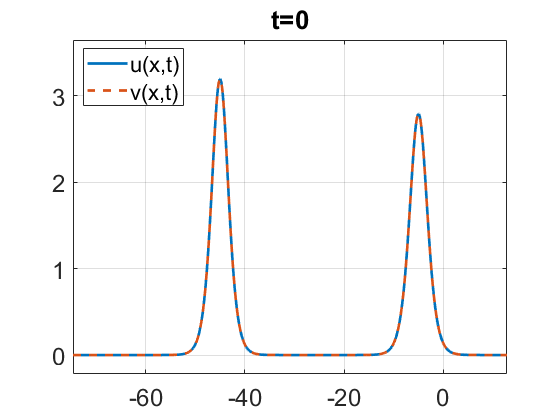}
\includegraphics[width=0.32\textwidth]{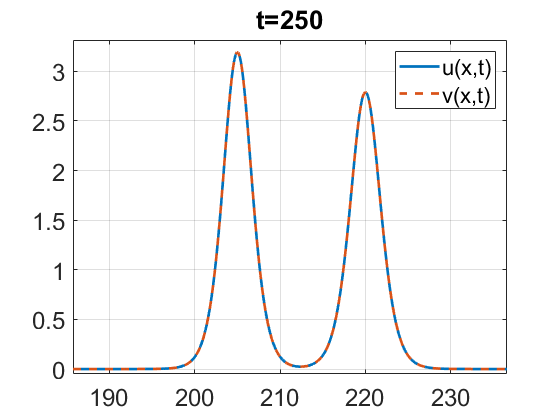}
\includegraphics[width=0.32\textwidth]{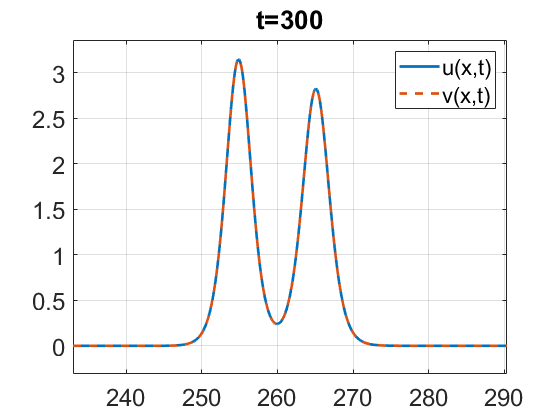}
\includegraphics[width=0.32\textwidth]{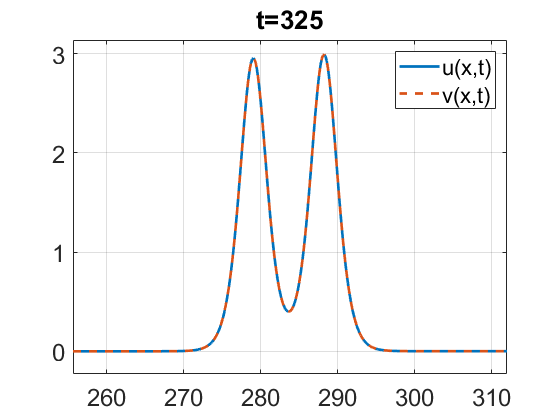}
\includegraphics[width=0.32\textwidth]{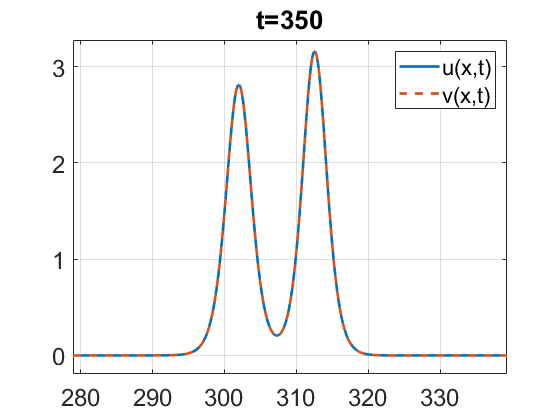}
\includegraphics[width=0.32\textwidth]{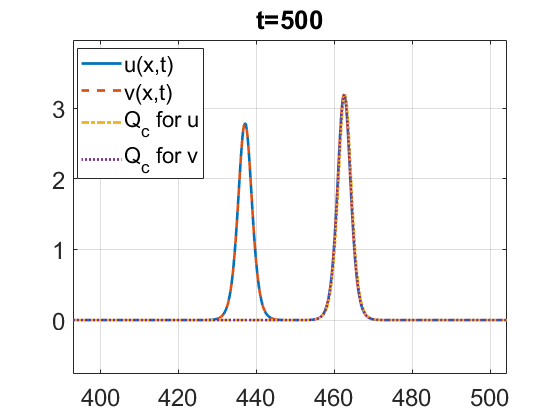}
\caption{\footnotesize  Time evolution for $\alpha=\frac{7}{9}$, $u_0=v_0=Q(x+45)+ Q_{0.9}(x+5)$. }
\label{F:profile PQPQ p79}
\end{figure}

For $\alpha=\frac{7}{9}$ a similar phenomenon occurs, 
the two solitons have the same original size and shape after the collision. Almost no radiation can be seen in this case, since it is close to $\alpha=1$, the integrable case, where the interaction of solitons is elastic;  
the tiny difference between $u(x,t)$ and $v(x,t)$ can be seen in Figure \ref{F:profile PQPQ79diff} (right), which shows that the difference starts closer to the collision (around $t=300$), when the two bumps get relatively close to each other. The difference grows after the interaction, see the right graph of Figure \ref{F:profile PQPQ79diff}, however, note the scale $10^{-4}$. 
\begin{figure}[ht]
\includegraphics[width=0.34\textwidth]{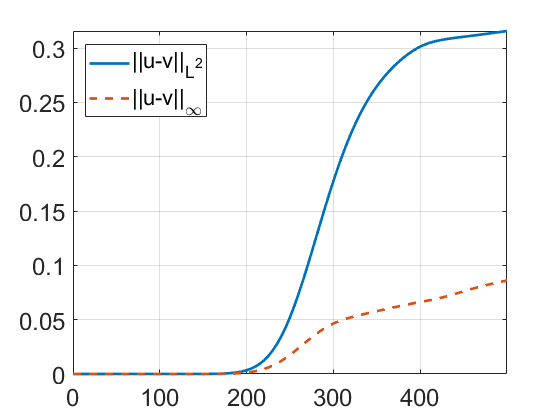}
\includegraphics[width=0.34\textwidth]{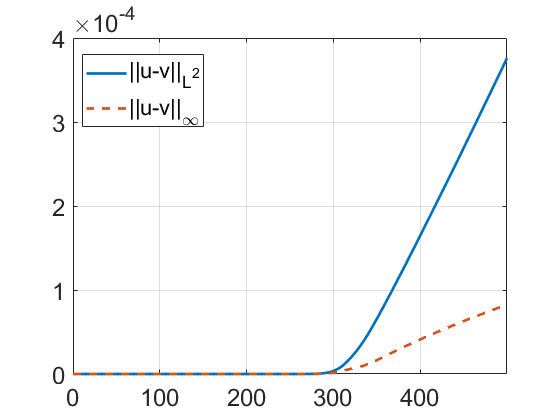}
\caption{\footnotesize Time dependence of the $u(x,t)$ and $v(x,t)$ difference in the $L^2_x$ and $L^{\infty}_x$ norms for $u_0=v_0=Q(x+45)+Q_{0.9}(x+5)$.  
Left: $\alpha=\frac{1}{9}$. Right: $\alpha=\frac{7}{9}$.}
\label{F:profile PQPQ79diff}
\end{figure}

Another way to track how large the radiation is in the dispersion part after the interaction is to look at the minimum value of the solutions at a given time: $\min_{x}(u(x,t))$ and $\min_{x}(v(x,t))$, which is shown in Figure \ref{F:profile PQPQ79min}. In the case $\alpha=\frac{1}{9}$ dispersion appears after $t \approx 200$; after the interaction in the gKdV case (solid blue) the dispersion stays on the order of $10^{-2}$, and in the GKdV case (dash red) it is on the level of $10^{-1}$. The case of $\alpha=\frac{7}{9}$ has dispersion after $t \approx 250$, which drops down after the interaction to the order of $10^{-3}$, much smaller than for $\alpha=\frac19$, confirming that this case is close to the integrable case $\alpha=1$, where no dispersion happens. 

\begin{figure}[ht]
\includegraphics[width=0.34\textwidth]{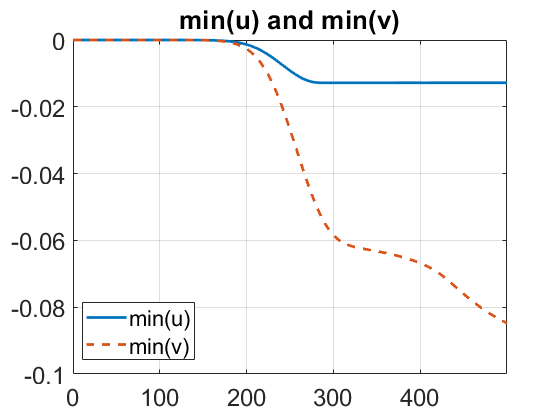}
\includegraphics[width=0.34\textwidth]{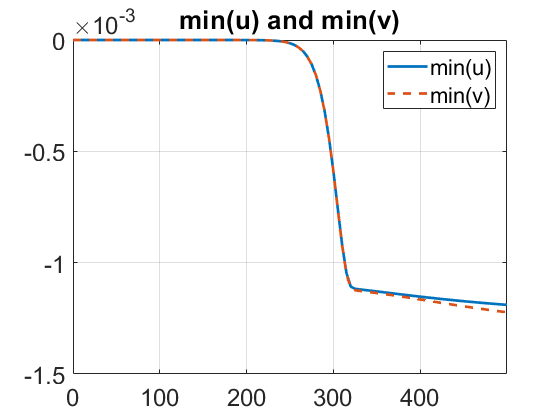}
\caption{\footnotesize  Minimum values of the solution $u$ and $v$ from $u_0=v_0=Q(x+45)+ Q_{0.9}(x+5)$. 
Left: $\alpha=\frac{1}{9}$. Right: $\alpha=\frac{7}{9}$.}
\label{F:profile PQPQ79min}
\end{figure}

We now consider the initial data with solitons of opposite signs and take 
$$
u_0=v_0= - Q_{c_1}(x +a_1)+ Q_{c_2}(x+a_2).
$$
For $\alpha=\frac{1}{9}$ we fix $c_1=1$, $c_2=0.75$, $a_1=100$ and $a_2=60$ and show the time evolution in Figure \ref{F:profile NQPQ p19}. 
\begin{figure}[ht]
\includegraphics[width=0.32\textwidth]{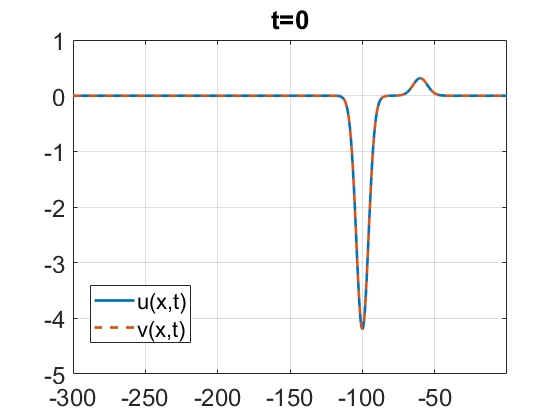}
\includegraphics[width=0.32\textwidth]{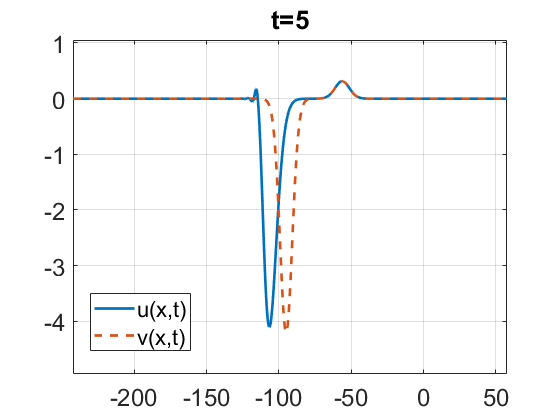}
\includegraphics[width=0.32\textwidth]{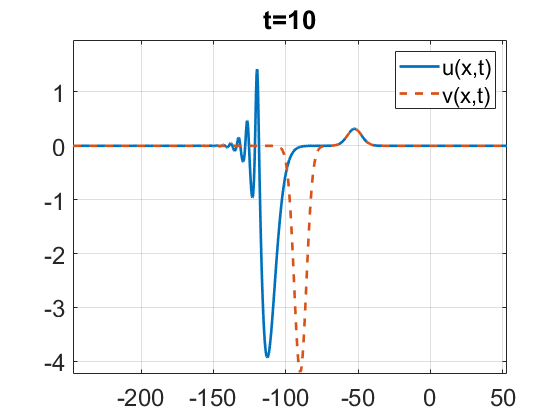}
\includegraphics[width=0.32\textwidth]{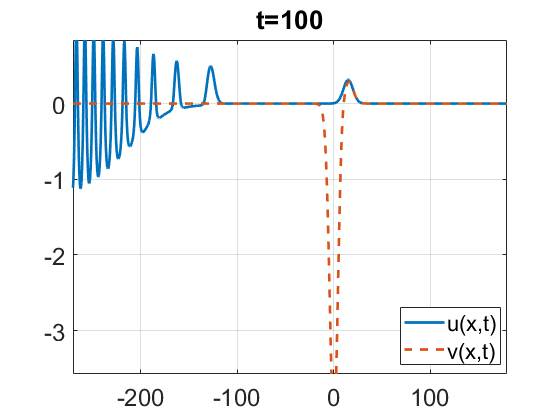}
\includegraphics[width=0.32\textwidth]{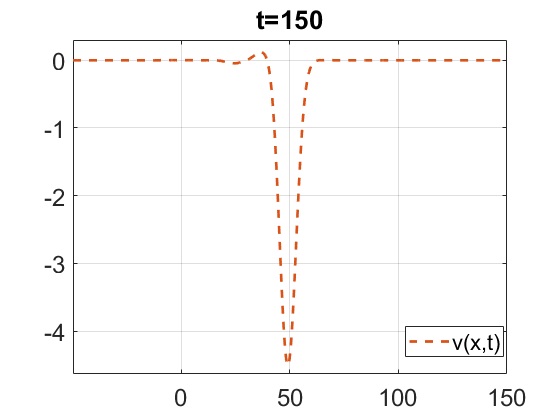}
\includegraphics[width=0.32\textwidth]{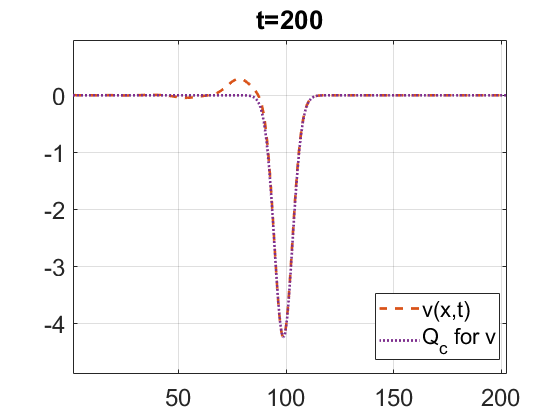}
\caption{\footnotesize Time evolution for $\alpha=\frac{1}{9}$, $u_0=v_0=-Q(x+100)+ Q_{0.75}(x+60)$.}
\label{F:profile NQPQ p19}
\end{figure}
The negative solition has a larger amplitude and speed and expected to catch up and collide with the smaller positive soliton. This is indeed the case for the GKdV solution $v(x,t)$ as can be seen by the dash red curve in Figure \ref{F:profile NQPQ p19}; the bottom right plot shows that after the interaction the large soliton appeared and has the same form as before, the smaller soliton is now behind. A very small radiation is outgoing to the left of both (red dash) solitons, which can be noticed in the middle and right bottom plots in Figure \ref{F:profile NQPQ p19}. 
On the other hand, the solution $u(x,t)$ has the negative soliton disperse into the radiation without any interaction, the smaller positive soliton keeps traveling to the right (solid blue in Figure \ref{F:profile NQPQ p19}). Eventually the radiation part of the solution will generate more solitons as we have discussed previously, one of them is already appearing in the bottom left plot of Figure \ref{F:profile NQPQ p19}. After that we stopped tracking $u(x,t)$ as no new information is produced (on a given computational interval). 

For $\alpha=\frac{7}{9}$ with the same initial data there is bigger difference in positive solitons formed at time $t=200$ (after the interaction of solitons in GKdV model), see Figure \ref{F:profile NQPQ p79}. The negative soliton in $u(x,t)$ as in the previous case radiates completely to the left, and then after some (significant) time will start forming positive solitons (in this case of smaller amplitude than the main positive soliton, see the blue line in the bottom row of Figure \ref{F:profile NQPQ p79}), it has not influence onto the main positive soliton, which travels to the right with the constant speed $c_2=0.75$. On the other hand, in the GKdV solution $v(x,t)$ the negative soliton catches up with the positive one and generate some radiation to the left (see the bottom right plot in Figure \ref{F:profile NQPQ p79}). The two positive 
bumps travel to the right with slightly different values of speed. In the last two plots of the bottom row in Figure \ref{F:profile NQPQ p79}, one can see the positive soliton for $v(x,t)$ is smaller than the positive soliton for $u(x,t)$. This indicates that during the collision some mass was pumped out into the radiation from the positive soliton in $v(x,t)$. The negative soliton in $v(x,t)$ has no visual changes, since the power $\alpha = \frac79$ is very close to the integrable case $\alpha=1$. 

\begin{figure}[ht]
\includegraphics[width=0.32\textwidth]{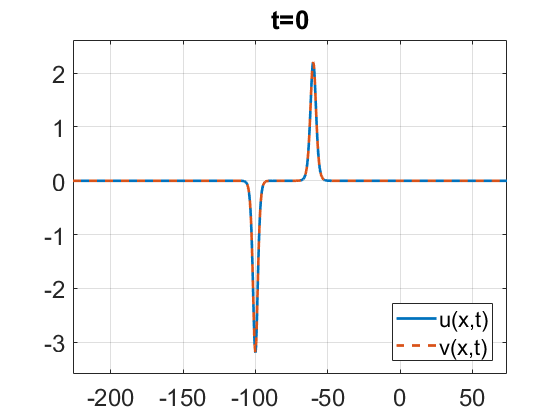}
\includegraphics[width=0.32\textwidth]{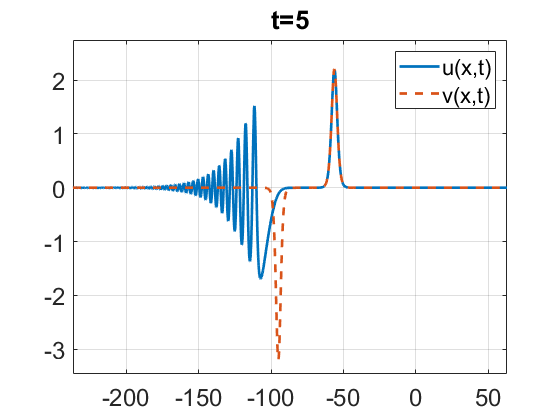}
\includegraphics[width=0.32\textwidth]{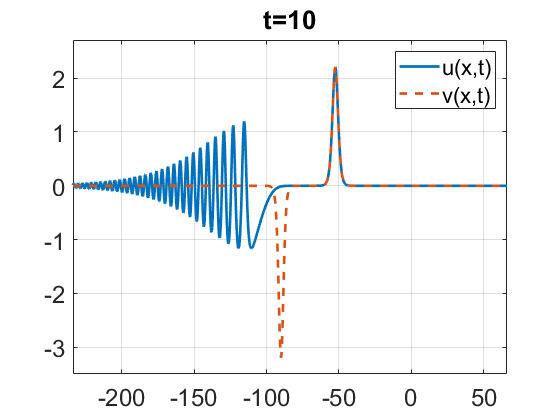}
\includegraphics[width=0.32\textwidth]{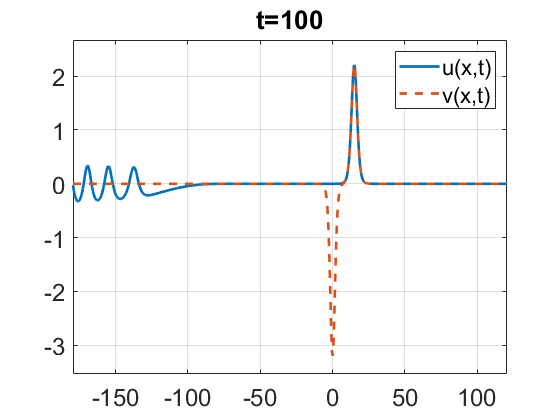}
\includegraphics[width=0.32\textwidth]{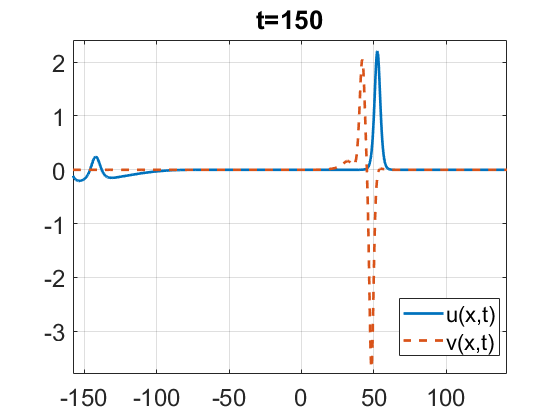}
\includegraphics[width=0.32\textwidth]{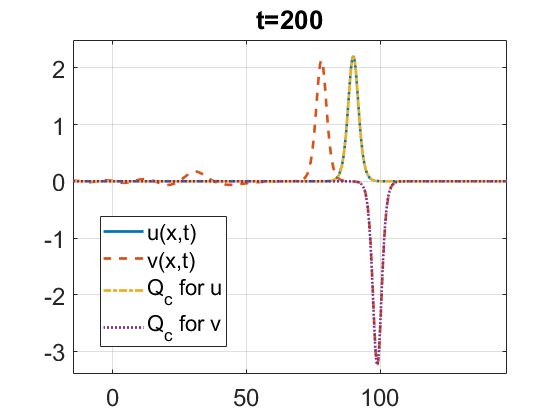}
\caption{\footnotesize Time evolution for $\alpha=\frac{7}{9}$, 
$u_0=v_0=-Q(x+100)+ Q_{0.75}(x+60)$.}
\label{F:profile NQPQ p79}
\end{figure}

In the previous two examples, we had a faster negative soliton placed initially behind the smaller positive soliton and we tracked a possible interaction, though the dispersion of the negative solution into the radiation in the gKdV case did not affect the dynamics of the positive soliton. Therefore, now we place the negative soliton ahead of the positive soliton and track only the gKdV solution $u(x,t)$ to see how the scattering part, which goes to the left, affects the positive soliton. We consider the solution of the form $u_0= - Q(x-100) + Q_{0.9}(x)$.  
\begin{figure}[ht]
\includegraphics[width=0.32\textwidth]{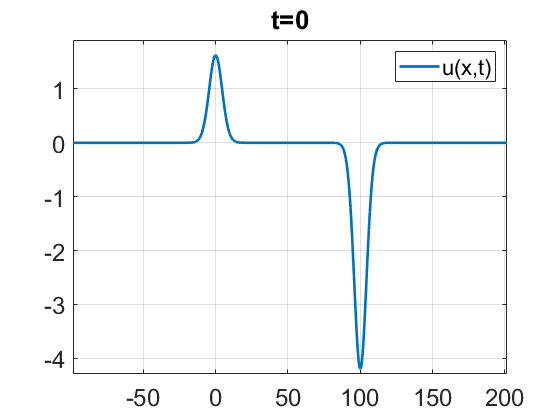}
\includegraphics[width=0.32\textwidth]{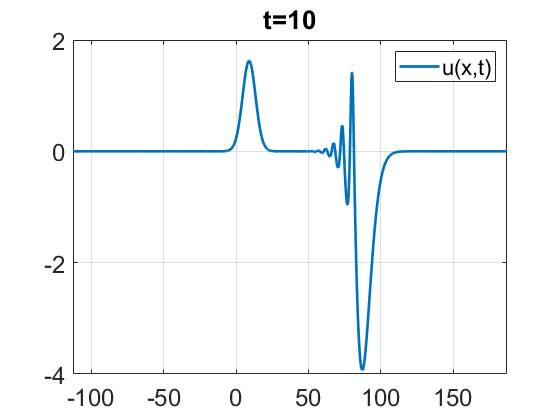}
\includegraphics[width=0.32\textwidth]{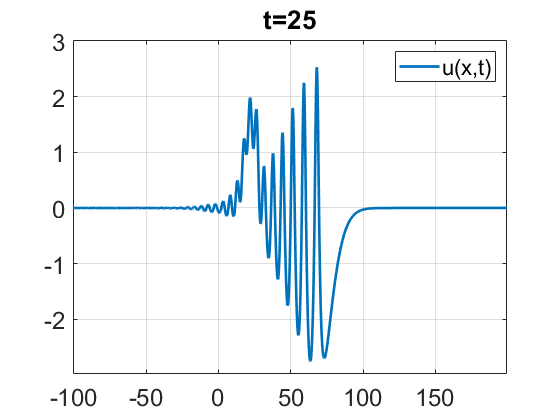}
\includegraphics[width=0.32\textwidth]{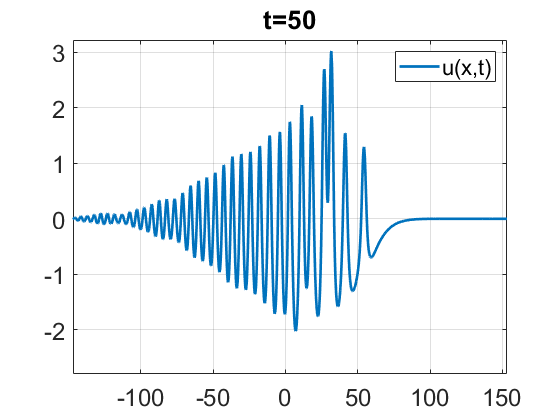}
\includegraphics[width=0.32\textwidth]{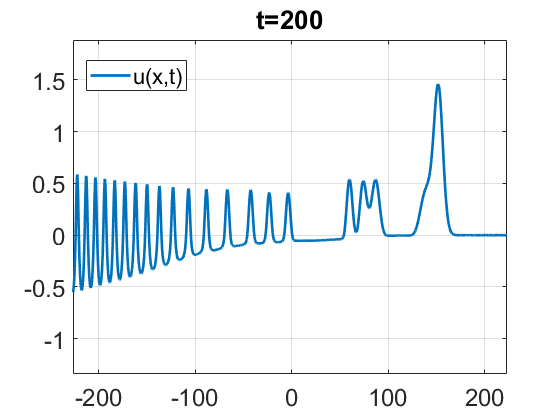}
\includegraphics[width=0.32\textwidth]{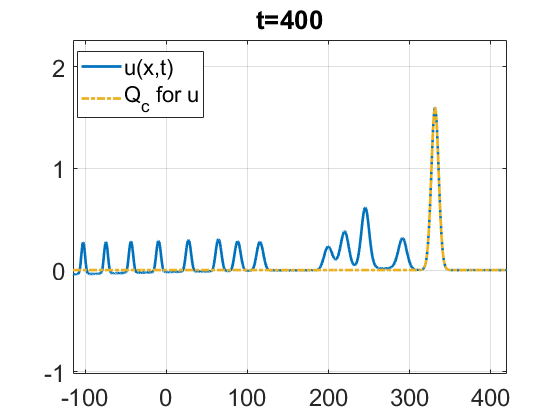}
\caption{\footnotesize Time evolution for $\alpha=\frac{1}{9}$, $u_0=v_0=-Q(x-100)+Q_{0.9}(x)$.}
\label{F:profile NQPQ u p19}
\end{figure}
\begin{figure}[ht]
\includegraphics[width=0.32\textwidth]{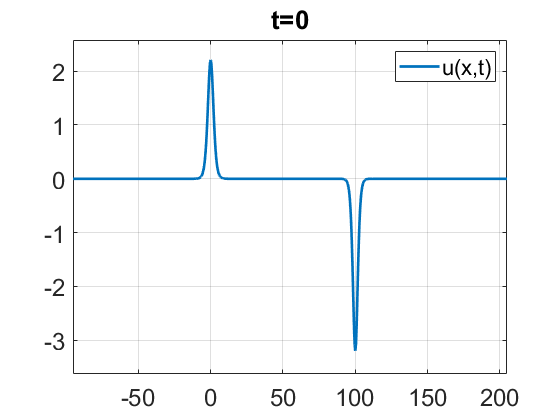}
\includegraphics[width=0.32\textwidth]{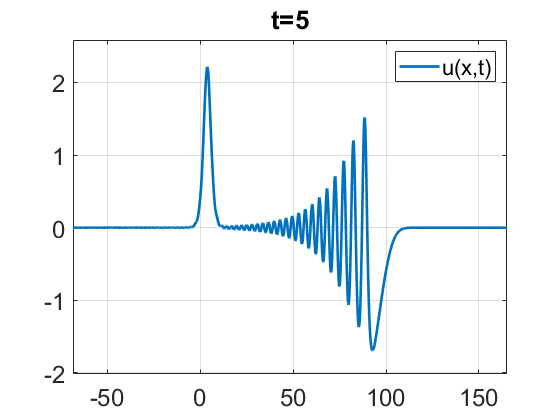}
\includegraphics[width=0.32\textwidth]{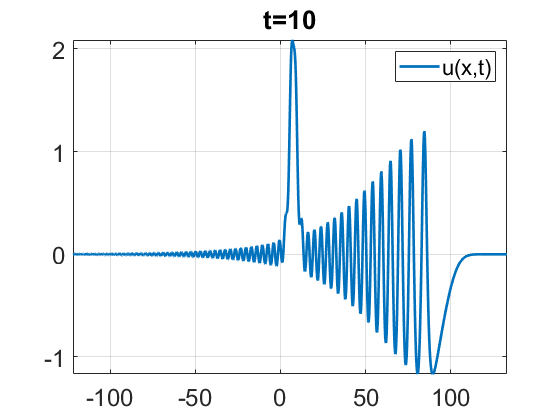}
\includegraphics[width=0.32\textwidth]{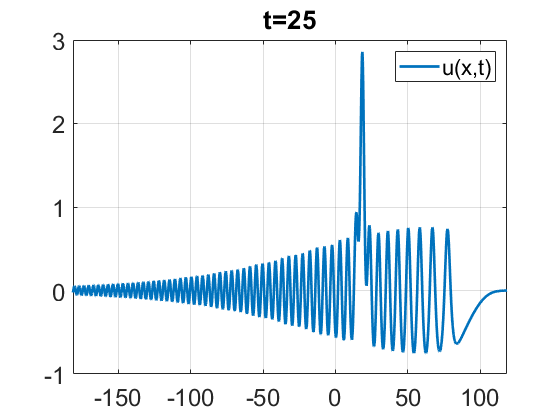}
\includegraphics[width=0.32\textwidth]{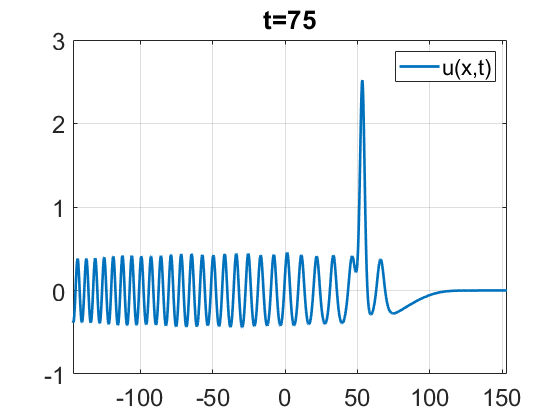}
\includegraphics[width=0.32\textwidth]{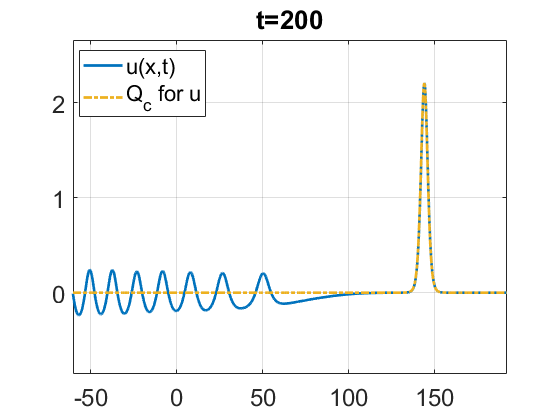}
\caption{\footnotesize Time evolution for $\alpha=\frac{7}{9}$, $u_0=v_0=-Q(x-100)+Q_{0.75}(x)$.}
\label{F:profile NQPQ u p79}
\end{figure}
As the time evolves, the dispersion significantly disturbs the positive soliton, see Figure \ref{F:profile NQPQ u p19}, however, after the interaction, the soliton bump is restored, moreover, it absorbed some mass from the dispersion part, and thus, formed a larger soliton after the interaction (see the bottom right plot in Figure \ref{F:profile NQPQ u p19}).

Similar phenomenon can be seen for $\alpha=\frac{7}{9}$ for the initial condition $u_0=v_0= -Q(x-100)+ Q_{0.75}(x)$ in Figure \ref{F:profile NQPQ u p79}. The positive soliton seems to be absorbing some mass from the radiation during the interaction (e.g., see bottom left plot), however, its height returns to being very close to the initial amplitude (see the bottom right plot). We also expect that more solitons would eventually form from the radiation on the left,  of smaller amplitudes, see the bottom right plots in both Figures \ref{F:profile NQPQ u p19} and \ref{F:profile NQPQ u p79}.

\subsection{Interaction between soliton and Gaussian}

Now we investigate the case when the two bumps have different shapes. For that we take the initial data composed of a single soliton and a Gaussian type bump: 
$$
u_0=v_0=AQ_c(x+a_1)+B\, e^{-x^2}.
$$ 

When both bumps are positive, $A, B>0$, and $\alpha=\frac{1}{9}$, we observe that the Gaussian produces a (small) soliton moving to the right and radiation moving to the left, as expected. The radiation, produced by the Gaussian component, collides with the second soliton, disturbs it a bit, however, continues as a soliton and then interacts with a smaller soliton, see Figure \ref{F:profile PQPE p19}. From the collision, the larger soliton comes out and eventually it shows that it maintained its original shape but slightly smaller amplitude. Figure \ref{F:profile PQPE p19} shows that this phenomenon holds true for both $u(x,t)$ and $v(x,t)$, the largest soliton in both solutions seems to be unaffected in terms of shape, both $u(x,t)$ and $v(x,t)$ visually coincide in the main solitons, but differ in their respective radiations. This difference can be further examined in Figure \ref{F:profile PQPE infty} (left) as well as the height of the main soliton after the interaction. 

\begin{figure}[ht]
\includegraphics[width=0.32\textwidth]{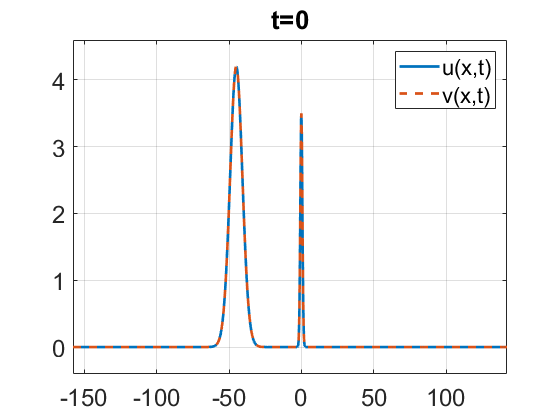}
\includegraphics[width=0.32\textwidth]{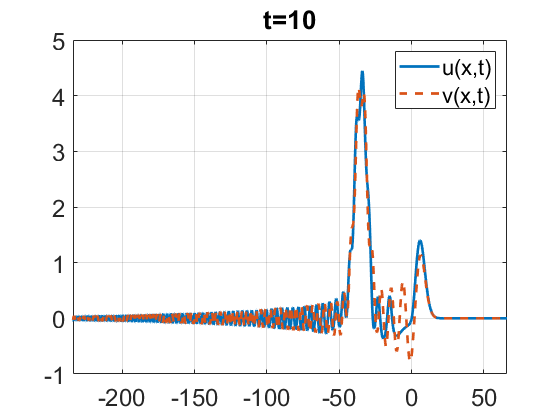}
\includegraphics[width=0.32\textwidth]{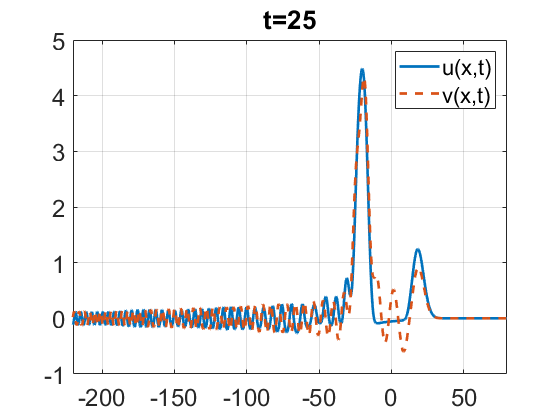}
\includegraphics[width=0.32\textwidth]{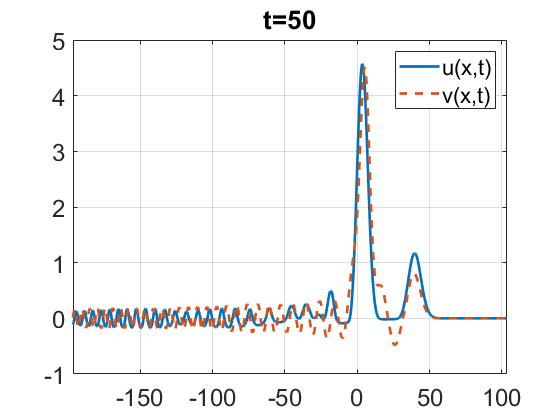}
\includegraphics[width=0.32\textwidth]{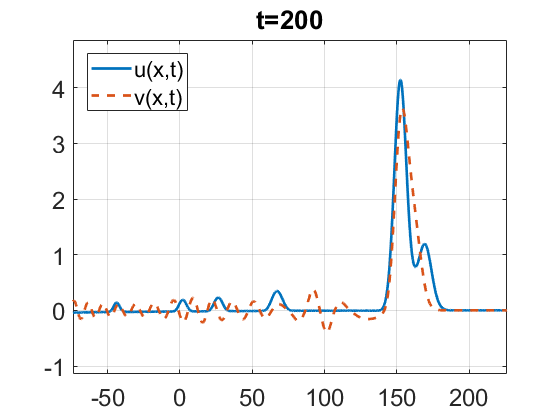}
\includegraphics[width=0.32\textwidth]{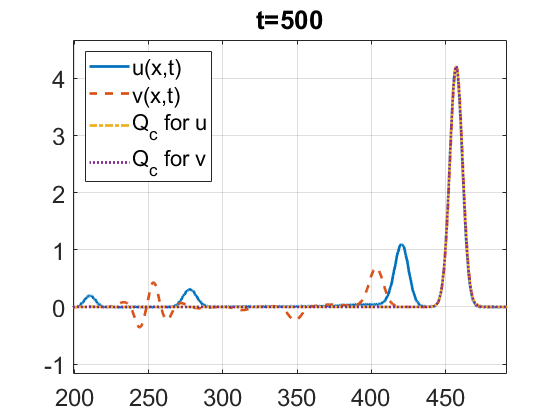}
\caption{\footnotesize Time evolution for $\alpha=\frac{1}{9}$, 
$u_0=v_0=Q(x+45)+3.5\,e^{-x^2}$.}
\label{F:profile PQPE p19}
\end{figure}

For $\alpha=\frac{7}{9}$, the same initial condition $u_0=v_0=Q(x+45)+3.5\, e^{-x^2}$ produces similar dynamics, see Figure \ref{F:profile PQPE p79}. However, one can note that $v(x,t)$ (red dash) collides with its second soliton before $u(x,t)$ (solid blue). The solitons separate after the collision, and again, both largest solitons in $u(x,t)$ and $v(x,t)$ coincide with each other. 
At $t=200$, one can see that $v(x,t)$ (red dash) has formed two positive solitons with at least one more negative forming, the gKdV solution $u(x,t)$ (solid blue) formed only two solitons. We also see that from the last plot in Figure \ref{F:profile PQPE p79}, the second largest (positive) soliton for $v(x,t)$ is slightly behind the second one for $u(x,t)$. 

\begin{figure}[ht]
\includegraphics[width=0.32\textwidth]{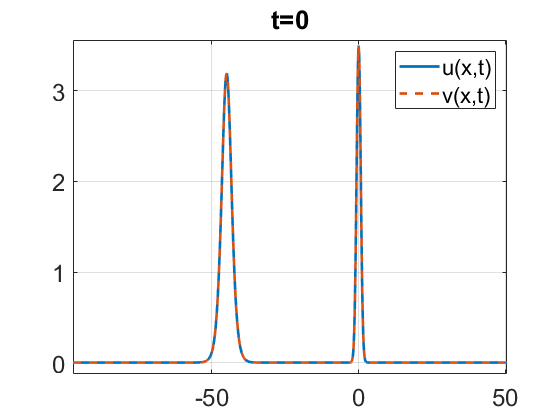}
\includegraphics[width=0.32\textwidth]{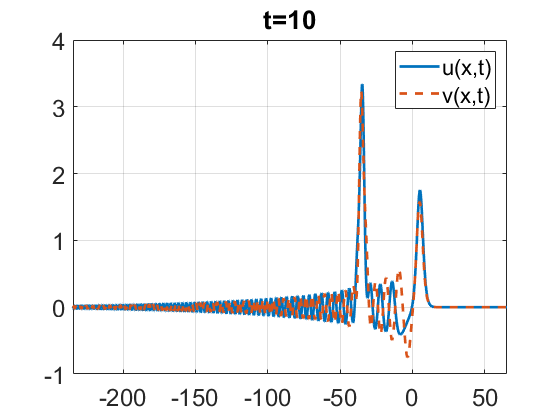}
\includegraphics[width=0.32\textwidth]{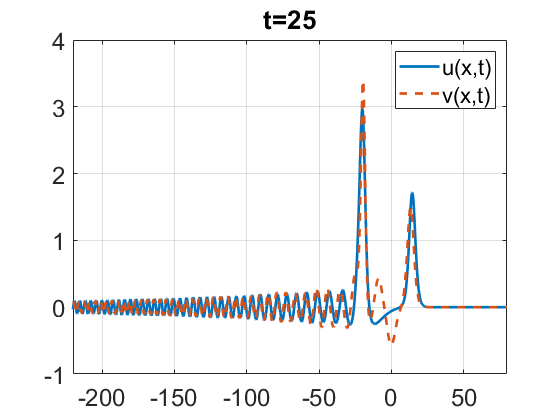}
\includegraphics[width=0.32\textwidth]{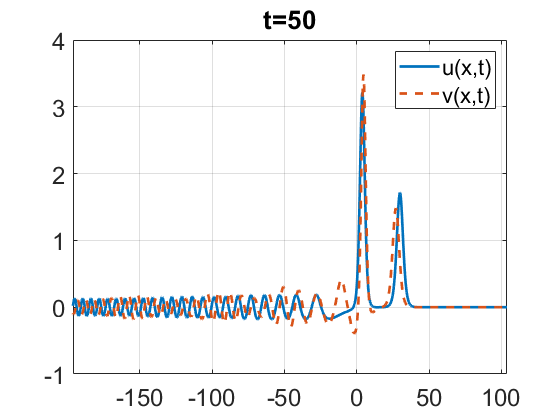}
\includegraphics[width=0.32\textwidth]{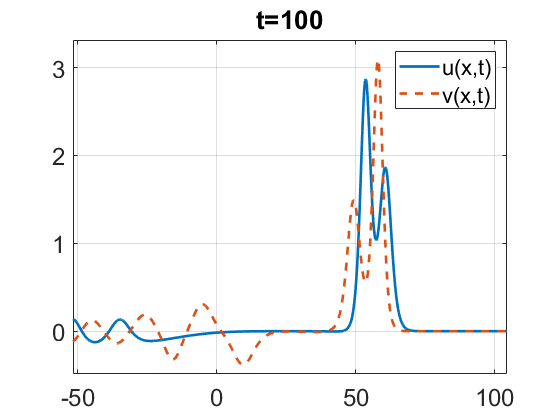}
\includegraphics[width=0.32\textwidth]{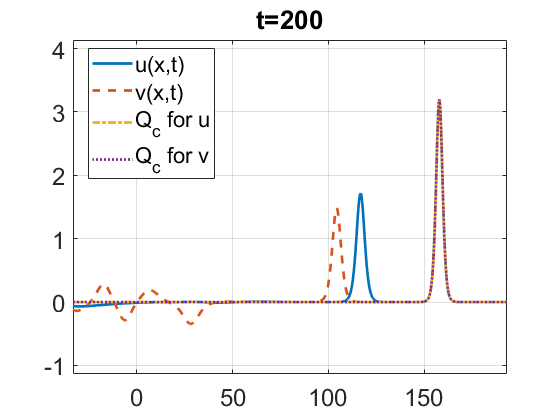}
\caption{\footnotesize Time evolution for $\alpha=\frac{7}{9}$, $u_0=v_0=Q(x+45)+3.5\,e^{-x^2}$.}
\label{F:profile PQPE p79}
\end{figure}

Figure \ref{F:profile PQPE infty} tracks the $L^{\infty}_x$ norm for $u$ and $v$. One can see the collision time (the deepest point) in the GKdV is later than in the gKdV evolution. 

\begin{figure}[ht]
\includegraphics[width=0.36\textwidth]{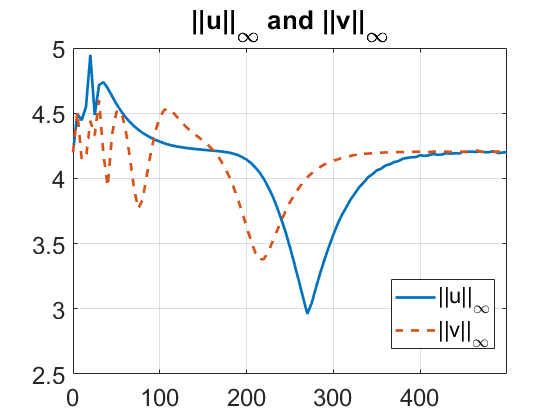}
\includegraphics[width=0.36\textwidth]{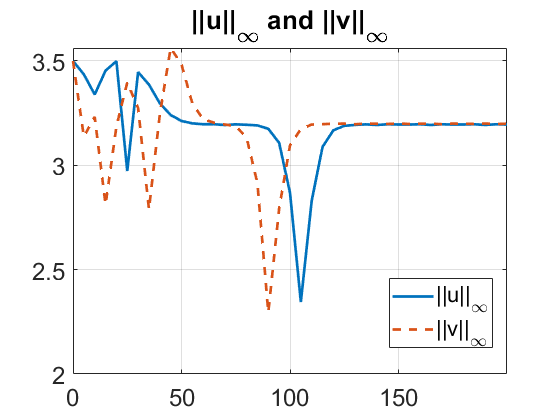}
\caption{\footnotesize Time dependence of $L^\infty_x$ norms for $u$ and $v$ from the data as in Figures \ref{F:profile PQPE p19} and \ref{F:profile PQPE p79} for $\alpha=\frac{1}{9}$ (left) and $\alpha=\frac79$ (right).}
\label{F:profile PQPE infty}
\end{figure}


\subsection{Interaction between two Gaussians}

When we consider the initial data with two Gaussian bumps of opposite signs, we place the negative soliton to the right of the positive one, since it will generate dispersion in both $u$ and $v$ solutions, going to the left, and thus, may affect the formation of a positive soliton from the positive Gaussian bump. We take 
$$
u_0=v_0= -2.5\, e^{-(x-50)^2}+1.5\,e^{-x^2}.
$$
Figures \ref{F:profile Ngaugau p19} and \ref{F:profile Ngaugau p79} show the solution at various times for $\alpha=\frac{1}{9}$ and $\alpha=\frac{7}{9}$, respectively.
\begin{figure}[ht]
\includegraphics[width=0.32\textwidth]{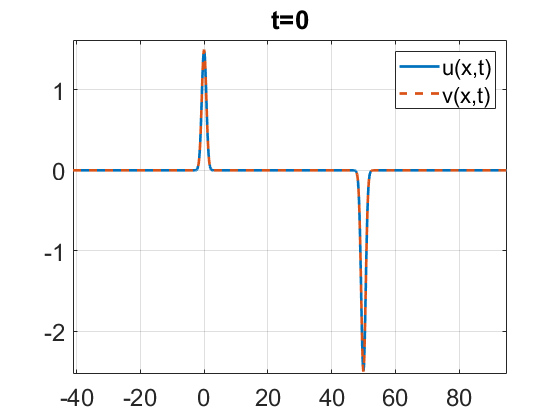}
\includegraphics[width=0.32\textwidth]{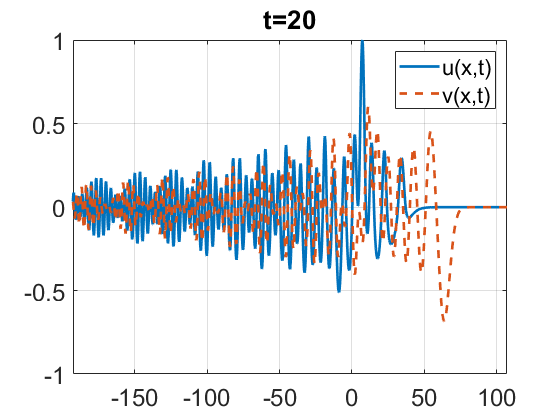}
\includegraphics[width=0.32\textwidth]{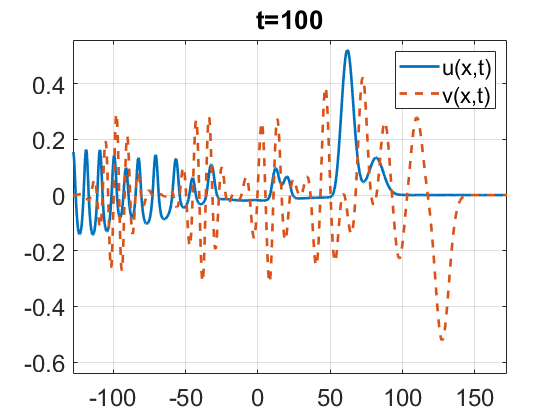}
\includegraphics[width=0.32\textwidth]{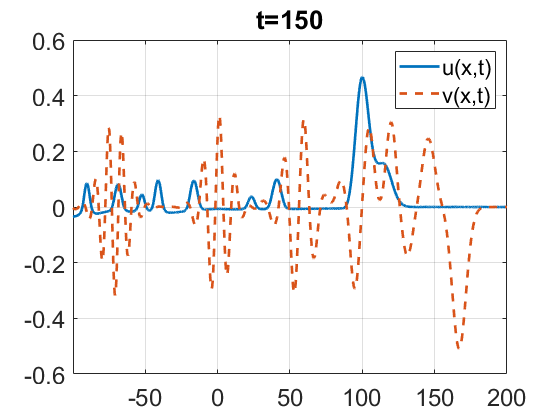}
\includegraphics[width=0.32\textwidth]{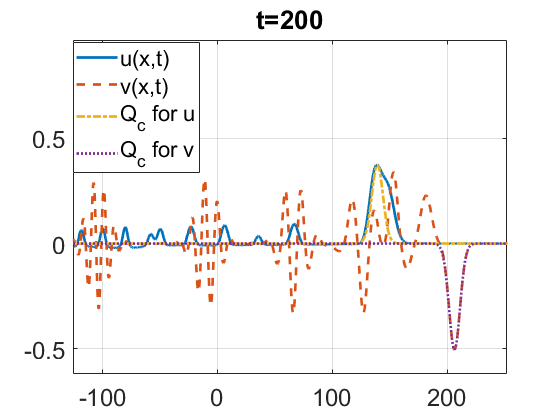}
\includegraphics[width=0.32\textwidth]{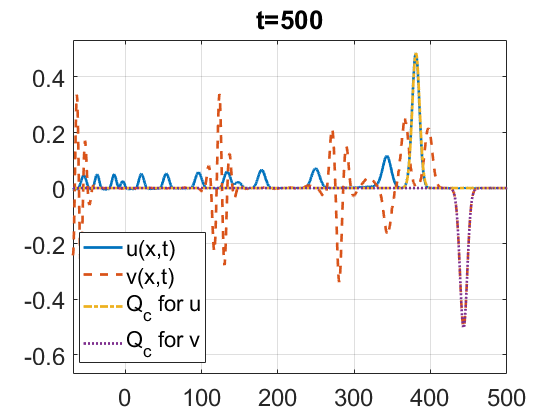}
\caption{\footnotesize Time evolution for $\alpha=\frac{1}{9}$, $u_0=v_0=-2.5\,e^{-(x-50)^2} + 1.5\, e^{-x^2}$.}
\label{F:profile Ngaugau p19}
\includegraphics[width=0.32\textwidth]{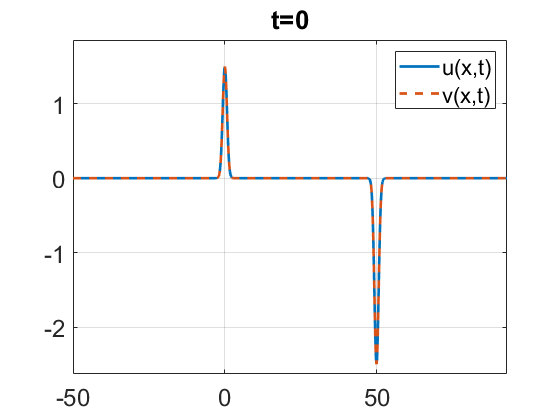}
\includegraphics[width=0.32\textwidth]{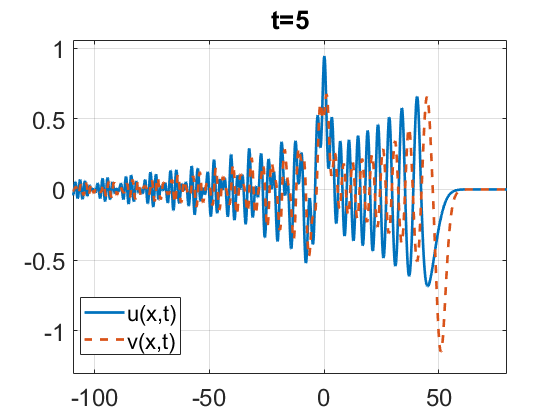}
\includegraphics[width=0.32\textwidth]{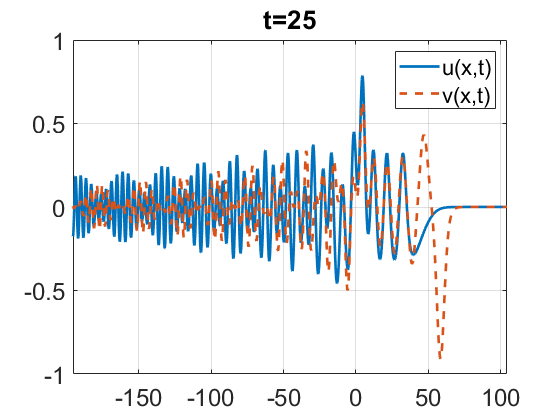}
\includegraphics[width=0.32\textwidth]{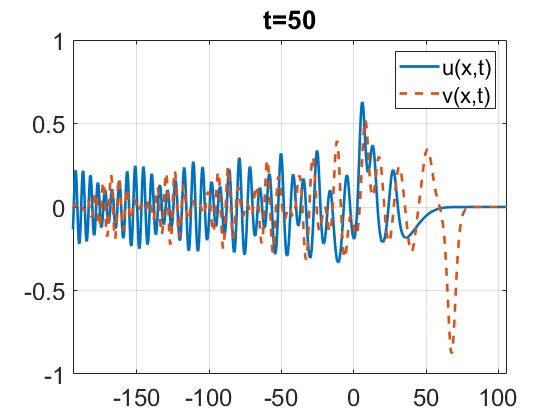}
\includegraphics[width=0.32\textwidth]{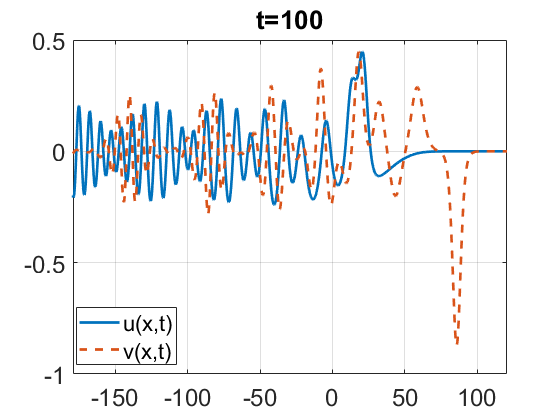}
\includegraphics[width=0.32\textwidth]{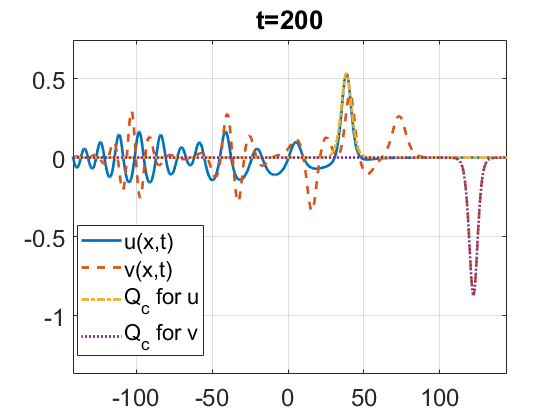}
\caption{\footnotesize Time evolution for $\alpha=\frac{7}{9}$, $u_0=v_0=-2.5\,e^{-(x-50)^2}+1.5\, e^{-x^2}$.}
\label{F:profile Ngaugau p79}
\end{figure}

In the gKdV case, as we have seen in one-bump examples, the negative bump would go into a dispersive radiation and the positive bump would converge to a rescaled soliton. In their combination, the gKdV (solid blue) negative bump produces quite a severe disturbance of the positive part, taking some of its mass into the radiation, and after that interaction the largest positive soliton emerges (see bottom right plot in Figures \ref{F:profile Ngaugau p19} and \ref{F:profile Ngaugau p79}, where we also matched (dotted yellow) with the rescaled soliton). The negative radiation also forms a smaller positive soliton to the right of the largest soliton, this can be seen more clearly in the top right plot of Figure \ref{F:profile Ngaugau p19}, and there maybe even more interactions of the largest soliton with other ones as can be seen when tracking the $L^\infty_x$ norm, see Figure \ref{F:profile Ngaugau infty} 
(two dips in $\alpha=\frac19$ case and at least three in $\alpha=\frac79$ case). 
In the $\alpha=\frac19$ case one can also see a train of solitons (solid blue bumps decreasing in height) form at $t=500$, see the bottom right plot in Figure \ref{F:profile Ngaugau p19}. The same will happen in the $\alpha=\frac79$ case, thought at much longer time. 
\begin{figure}[ht]
\includegraphics[width=0.32\textwidth]{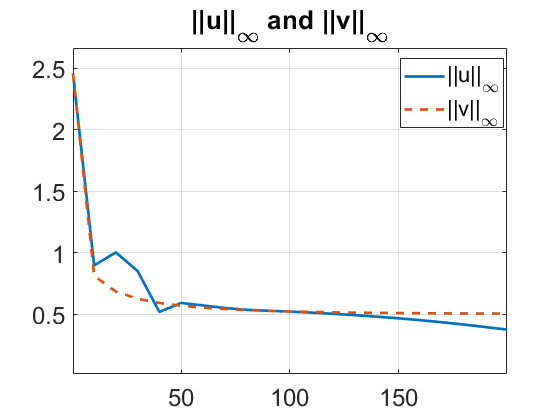}
\includegraphics[width=0.32\textwidth]{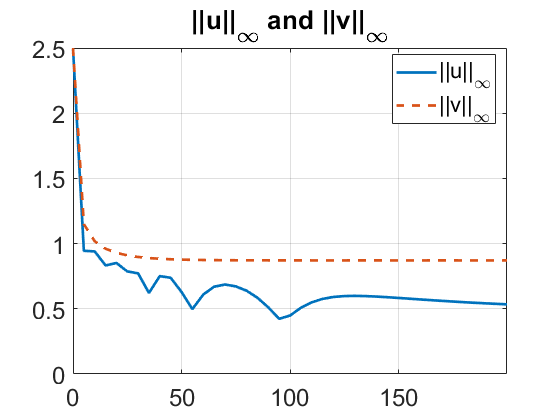}
\caption{\footnotesize 
$L^\infty_x$ norm for solutions  
in Figures \ref{F:profile Ngaugau p19}, \ref{F:profile Ngaugau p79} 
for $\alpha=\frac{1}{9}$ (left) and $\alpha=\frac79$ (right).}
\label{F:profile Ngaugau infty}
\end{figure}
In the GKdV case (dash red), the negative bump forms several solitons alternating in sign (starting from the largest negative one and then positive), that can be seen right away in the top row of Figures \ref{F:profile Ngaugau p19} and \ref{F:profile Ngaugau p79}. 
The right plot in Figure \ref{F:profile Ngaugau infty} indicates convergence to the largest soliton almost immediately in the GKdV case (dash red). The positive bump also forms a positive soliton, however, it gets disturbed by the radiation coming from the negative bump. Eventually several solitons (some negative, some positive) will be formed from the interaction and radiation as can be seen in the last plots of Figures \ref{F:profile Ngaugau p19} and \ref{F:profile Ngaugau p79}. 
\begin{figure}[ht]
\includegraphics[width=0.32\textwidth]{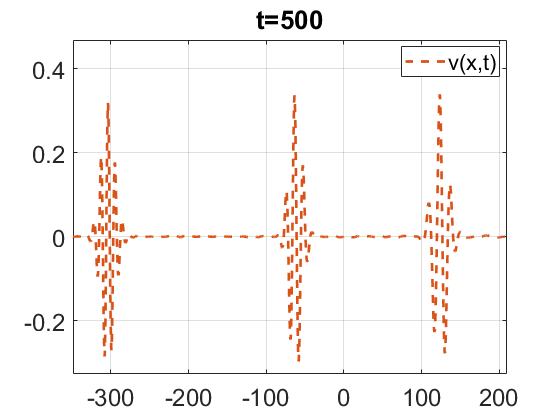}
\includegraphics[width=0.32\textwidth]{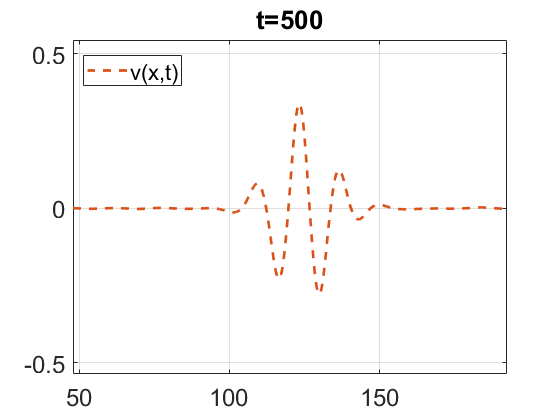}
\includegraphics[width=0.32\textwidth]{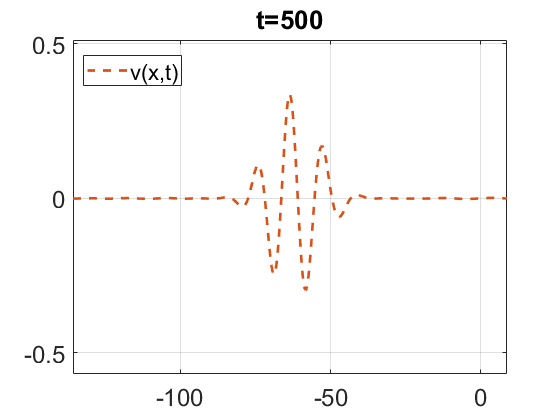}
\includegraphics[width=0.32\textwidth]{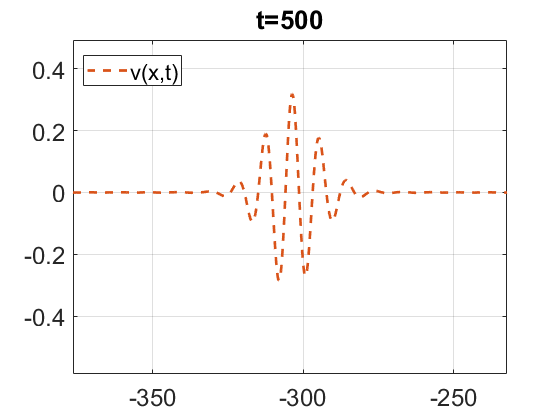}
\includegraphics[width=0.32\textwidth]{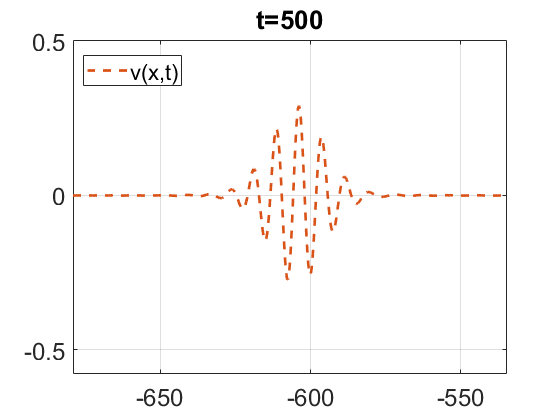}
\includegraphics[width=0.32\textwidth]{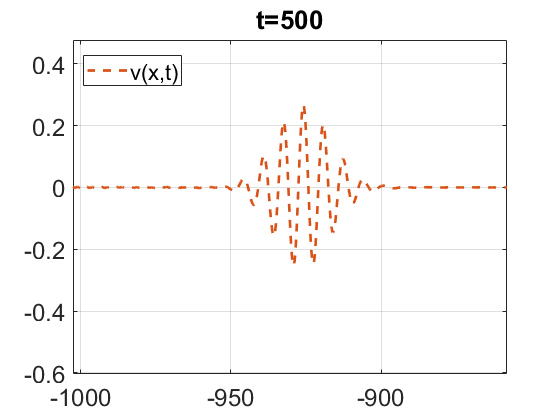}
\caption{\footnotesize Breathers in GKdV \eqref{GK} for $\alpha=\frac{1}{9}$ with $v_0=-2.5\,e^{-(x-50)^2} + 1.5\, e^{-x^2}$ at $t=500$.}
\label{F:profile Ngaugau p19 breather}
\end{figure}
Examining the GKdV solution's radiation part a bit more closely, one can observe the formation of {\it breathers}. This is easier to see for the lower nonlinearity case, $\alpha=\frac{1}{9}$. We zoom-in into the range $[-350,200]$ at $t=500$, see the first top left plot in Figure \ref{F:profile Ngaugau p19 breather}, where we see the periodic structures propagating to the left; then we examine several breathers separately, enlarging even further to see their shape in the rest of the plots of Figure \ref{F:profile Ngaugau p19 breather}. We study this phenomenon in more details elsewhere. \\


\section{Conclusion}\label{S:C}
In this work, we investigated solutions to the generalized KdV equation, with the emphasis on the low powers of nonlinearity, and also comparing the affect of having an absolute value in the nonlinearity, i.e., $|u|^\alpha \, u_x$ vs. $u^{\alpha} u_x$. First, in the analytical part, we proved the local well-posedness for a certain subclass of $H^1$ space that includes polynomially decaying initial data:  Theorem \ref{mainTHM} rigorously establishes the existence of local solutions for the Cauchy problems \eqref{gKdV} and \eqref{GK}  for any $\alpha>0$. A main ingredient is the deduction of Lemma \ref{derivexp2}, which relates solutions of the linear KdV equation and fractional weights.
We point out that our well-posedness result extends the conclusions in \cite{LinaresMiyaGus} to the weights of fractional order. Having fractional powers offer more flexibility in applications as they provide a wider class of resolution spaces for the equations \eqref{gKdV} and \eqref{GK}. 

Next, in our numerical study, we investigated the long term behavior of solutions that are given by the local well-posedness of Theorem \ref{mainTHM} and also extended to those, which are not covered by the theorem (e.g., data with exponential decay). Via numerical simulations we were able to study the dynamics of solutions to the equations \eqref{GK} and \eqref{gKdV}, which turns out to be quite different, unless the initial data is positive. The difference in the {\it positive} case is that the solitons formed by the GKdV ($|u|^{\alpha} u_x$) evolution have smaller height than in the gKdV ($u^{\alpha} u_x$) evolution. Given {\it negative} data, solutions to the gKdV equation \eqref{gKdV} initially  disperse completely into the radiation, but then after some time start generating positive solitons. Solutions to the GKdV always form solitons regardless of the initial data sign, in fact, symmetrically with respect to the sign of data. We also observe that the lower power of nonlinearity is, the shorter time it takes for the solution to form multiple solitons such as train of solitons in gKdV or alternating solitons in GKdV (especially, taking into account the computational restrictions such as range, time and errors), therefore, it might be useful to consider low powers of $\alpha$ for future studies of long term behavior of solutions. We also studied the interaction of two-bump initial data that further helps understanding the formation of solitons, radiation, and even breathers in radiation, confirming the soliton resolution conjecture.

\section{Appendix}
We briefly describe the numerical method used in this paper. For the spatial discretization, we use the Fourier spectral method, see e.g., \cite{GWWC2017}, \cite{GCW2014} and \cite{CWJ2021}. We take large enough periodic domain $[-L,L]$ to approximate the solution on $\mathbb{R}$. The equation in the frequency space becomes the nonlinear ODE system:
\begin{align}\label{E:FFT}
(\hat{U}_k)_t+\left( \frac{i\pi k}{L} \right)^3 \hat{U}_k + \frac{i\pi k}{L} \widehat{U^3}_k=0,
\end{align}
where $k=-N/2, \cdots, N/2-1$, and $\hat{U}$ is the Fourier coefficients vector of the solution $ U_k  \approx  u(x_k)  $ obtained from Fast Fourier transform.

The resulting ODE system \eqref{E:FFT} can be solved by two different approaches. The first natural approach is to use the standard implicit mid-point rule, i.e.,
\begin{align}\label{E:IRK2}
\frac{(\widehat{U^{n+1}})_k-(\widehat{U^n})_k }{\Delta t}+\left( \frac{i\pi k}{L} \right)^3 \widehat{(U^{n+\frac{1}{2}}) }_k+ \frac{i\pi k}{L} \widehat{(U^{n+\frac{1}{2}} )^3}_k=0,
\end{align}
where $U^{n} \approx u(x,t_n)$ is the value of $u(x,t)$ at the $n$th time step, and $\Delta t$ is the step size.
We denote $U^{n+\frac{1}{2}}=\frac{U^{n+1}+U^n }{2}$ for simplicity. The system \eqref{E:IRK2} can be solved by the fixed point iteration. We refer to \cite{KP2015} and \cite{Yang2021} for more details about the implementation of the fixed point iteration solver.

The other choice is to use explicit methods to avoid solving the nonlinear system. The Exponential time differencing integrator can alleviate the stiffness. In this paper, we use the 4th order Exponential Time Differencing Runge-Kutta (ETDRK4) method from \cite{CM02}, \cite{KT05}.
We consider the ODE of the form
 \begin{align}\label{E:ODE ETD}
U_t=\mathbf{L}U+\mathbf{N}(U),
\end{align}
where $\mathbf{L}$ is the matrix that comes from the discretization of the spatial derivatives, and $\mathbf{N}(U)$ is the nonlinear term. For example, $\mathbf{L}$ is the diagonal matrix with the diagonal term ranging from $-N/2, \cdots, N/2-1$, if the gKdV equation system is discretized by the Fourier spectral method.
Then, rewriting the equation \eqref{E:ODE ETD} in the Duhamel form, we get,
\begin{align}\label{E:ODE Du}
U^{n+1}=e^{\mathbf{L}\Delta t}U^{n}+  \int_0^{\Delta t} e^{\mathbf{L} (\Delta t-s)} \mathbf{N}(U(t_n+s)) ds.
\end{align}
Therefore, it is only needed to find a quadrature to approximate the integral term $$\int_0^{\Delta t} e^{\mathbf{L} (\Delta t-s)} \mathbf{N}(U(t_n+s)) ds.$$
One possibility is the fourth order scheme from Cox and Matthews (known as ETDRK4) \cite{CM02}, which gives the following formulas:
\begin{align}\label{E:ETDRK}
&a_n=e^{\mathbf{L} \Delta t/2}U^n+ \mathbf{L}^{-1} (e^{\mathbf{L} \Delta t/2} - \mathbf{I})\mathbf{N}(U^n); \nonumber \\
&b_n=e^{\mathbf{L} \Delta t/2}U^n+ \mathbf{L}^{-1} (e^{\mathbf{L} \Delta t/2} - \mathbf{I})\mathbf{N}(a_n); \nonumber \\
&c_n=e^{\mathbf{L} \Delta t/2}a_n+ \mathbf{L}^{-1} (e^{\mathbf{L} \Delta t/2} - \mathbf{I})(2\mathbf{N}(b_n)-\mathbf{N}(U^n)); \nonumber \\
&\mathbf{f}_1=\Delta t^{-2} \mathbf{L}^{-3} [-4-\Delta t \mathbf{L} + e^{\Delta t \mathbf{L}}(4-3\Delta t \mathbf{L} +(\Delta t \mathbf{L})^2 ) ]; \nonumber \\
&\mathbf{f}_2=2\Delta t^{-2} \mathbf{L}^{-3} [2+\Delta t \mathbf{L} + e^{\Delta t \mathbf{L}}(-2+\Delta t \mathbf{L} ) ];  \nonumber \\
&\mathbf{f}_3=\Delta t^{-2} \mathbf{L}^{-3} [-4-3 \Delta t \mathbf{L} - (\Delta t \mathbf{L})^2+ e^{\Delta t \mathbf{L}}(4-\Delta t \mathbf{L}) ]; \nonumber \\
&U^{n+1}=e^{\Delta t \mathbf{L}} U^n +\mathbf{f}_1\mathbf{N}(U^n)+\mathbf{f}_2[\mathbf{N}(a_n)+\mathbf{N}(b_n)]+\mathbf{f}_3\mathbf{N}(c_n).
\end{align}
We note that the potential singularity in the term $\mathbf{L}^{-1} (e^{\mathbf{L} \Delta t/2} - \mathbf{I})$ at the $0th$ Fourier node can be resolved by using the contour integrals from \cite{KT05}.

In our simulation, these two methods produce almost the same solution profiles. We also mention that there are other higher order conservative schemes, which can be applied in this study  as well, e.g., \cite{Yang2021}. Since the numerical stability and accuracy are not our main concern in this paper, we only use the most efficient scheme as described above.


\bibliographystyle{acm}
\bibliography{references}

\end{document}